\documentclass[11pt,reqno]{amsart}
\usepackage[T1]{fontenc}
\usepackage[utf8]{inputenc}
\usepackage{lmodern,amsmath,amssymb,mathtools,mathrsfs}
\usepackage{microtype,needspace}
\usepackage[a4paper,margin=28mm,headheight=14pt]{geometry}
\usepackage{booktabs,array}
\usepackage[hidelinks,bookmarksnumbered]{hyperref}
\hypersetup{pdftitle={Positive cubature on S²: low-degree rigidity and uniform bounds},pdfauthor={Zhuo Cheng and Deyu Yu},pdfsubject={Positive cubature, node bounds, and radial cap methods},pdfkeywords={positive cubature, spherical design, Toeplitz completion, effective bounds}}
\numberwithin{equation}{section}
\newtheorem{theorem}{Theorem}[section]
\newtheorem{proposition}[theorem]{Proposition}
\newtheorem{lemma}[theorem]{Lemma}
\newtheorem{corollary}[theorem]{Corollary}
\newtheorem{conjecture}[theorem]{Conjecture}
\theoremstyle{definition}

\theoremstyle{remark}

\newcommand{\R}{\mathbb R}
\newcommand{\C}{\mathbb C}
\newcommand{\Sph}{S^2}
\newcommand{\RP}{\mathbb {RP}^2}
\newcommand{\dd}{\,d}
\newcommand{\Acal}{\mathcal A}
\newcommand{\Kcal}{\mathcal K}
\newcommand{\eps}{\varepsilon}
\newcommand{\ind}{\mathbf 1}
\DeclareMathOperator{\supp}{supp}
\DeclareMathOperator{\rank}{rank}

\DeclareMathOperator{\odd}{odd}
\DeclareMathOperator{\osc}{osc}
\DeclareMathOperator{\sgn}{sgn}
\DeclareMathOperator{\tr}{tr}
\DeclareMathOperator{\Cov}{Cov}
\newcommand{\norm}[1]{\lVert#1\rVert}

\allowdisplaybreaks[1]
\title[Positive cubature on $S^2$]{Positive cubature on $S^2$:\\low-degree rigidity and uniform bounds}
\author{Zhuo Cheng}
\address{School of Mathematics and Statistics, HNP-LAMA, Central South University, Changsha, Hunan 410083, P. R. China}
\email{zhuo@outlook.cz}

\author{Deyu Yu}
\address{School of Mathematics and Statistics, HNP-LAMA, Central South University, Changsha, Hunan 410083, P. R. China}
\email{242107002@csu.edu.cn}

\date{30 September 2026}
\subjclass[2020]{Primary 65D32; Secondary 05B30, 42C10, 46E35, 52C17}
\keywords{positive cubature, spherical design, hemisphere rank, Toeplitz moments, linear programming, spherical packing, cap variational problem}
\begin{document}
\begin{abstract}
Let \(N_t\) denote the least number of nodes in a positive cubature formula of degree \(t\) on \(S^2\). We prove that a formula of degree \(2m+1\) cannot have exactly \((m+1)(m+2)+1\) nodes for any \(m\ge2\). Excluding equality in the Fisher bound then gives
\[
N_{2m+1}\ge (m+1)(m+2)+2
\]
for \(m\ge3\), and known constructions yield the exact values \(N_7=22\) and \(N_9=32\). For general odd degrees, positive circle measures on Lobatto latitudes give an upper bound with quadratic coefficient \(13/8\), parity-dependent linear terms, and an \(O(m^{2/3})\) remainder. We determine the sharp constant \(49\sqrt[3]{3}/72\) for the scalar remainder in this construction. Lower bounds are obtained from continuous weighted LP--Tur\'an inequalities and radial caps whose Helmholtz companions are nonnegative measures. We prove that the cap functional admits a maximizer at each fixed admissible support radius and derive explicit finite-degree lower bounds by a positivity-preserving transfer to the sphere.
\end{abstract}
\maketitle

\section{Introduction and main result}\label{sec:intro}
Let $\sigma$ be normalized surface measure on the unit sphere $\Sph\subset\R^3$. A positive cubature formula of degree $t$ consists of distinct nodes $x_1,\ldots,x_N\in\Sph$ and weights $w_i>0$ such that
\begin{equation}\label{eq:cubature}
 \int_{\Sph} f\dd\sigma=\sum_{i=1}^N w_i f(x_i),
\end{equation}
for every real polynomial of total degree at most $t$. Thus $\sum_iw_i=1$. We write $N_t$ for the least possible number of nodes. Equal weights give spherical designs~\cite{DGS}; throughout, zero weights are deleted and coincident nodes are combined.

Minimum-node cubature is a classical problem in multivariate integration; see~\cite{Moller,Cools,Xu}. Its role in spherical integration and exact polynomial sampling is discussed in~\cite{SloanWomersley,HesseSloanWomersley}; see also the sampling identity \eqref{eq:sampling-identity}. The same minimum $N_t$ is used in~\cite{ChangHang}, where its determination in all degrees is posed as an open problem; see also~\cite{Putterman}.

The quantity $N_t$ also enters moment-constrained Moser--Trudinger--Onofri inequalities~\cite{ChangHang}. With the probability measure $\sigma$, the critical coefficient of $\int_{\Sph}|\nabla u|^2\dd\sigma$ in an upper bound for $\log\int_{\Sph}e^{2u}\dd\sigma$ is $1/N_t$, under the conditions $\int_{\Sph}u\dd\sigma=0$ and $\int_{\Sph}p e^{2u}\dd\sigma=0$ for all zero-mean spherical polynomials $p$ of degree at most $t$. The estimates hold with coefficient $1/N_t+\delta$ for every $\delta>0$, whereas no smaller coefficient than $1/N_t$ is admissible with a finite additive constant. The exact odd-degree values proved below give $1/22$ and $1/32$ in degrees seven and nine. Related Sobolev inequalities involve weight entropies~\cite{HangWang,Putterman}. An alternative proof of the constrained Moser--Trudinger--Onofri inequality and extensions to other domains and higher-order operators are given in~\cite{SunZou}.

Positive power decompositions connect cubature with linear isometric embeddings~\cite{Reznick,LyubichVaserstein,Lyubich}. For integers $m,r\ge1$, an identity $\norm{u}^{2m}=\sum_{j=1}^r\ell_j(u)^{2m}$ on $\R^3$, with real linear forms $\ell_j$, defines an isometric embedding $u\mapsto(\ell_j(u))_{j=1}^r$ from $\ell_2^3$ into $\ell_{2m}^r$. Normalization and antipodalization give a positive degree-$(2m+1)$ formula with at most $2r$ nodes, so $2r\ge N_{2m+1}$. Node lower bounds therefore also bound the number of terms in these positive power decompositions.

Positive cubature also arises in optimal estimation of pure quantum states. The optimal collective-estimation problem for finitely many identical copies was established by Massar and Popescu~\cite{MassarPopescu}; finite realizable generalized measurements and low-copy outcome-minimization constructions were subsequently developed in~\cite{DerkaBuzekEkert,LatorrePascualTarrach}. Under the Bloch correspondence, a degree-$t$ formula on $\Sph$ is a weighted complex projective $t$-design in $\C^2$; see also~\cite{Lyubich}. If $\psi_i$ is a unit qubit state with Bloch vector $x_i$, the operators $E_i=(t+1)w_i(\psi_i\psi_i^*)^{\otimes t}$ sum to the identity on $\operatorname{Sym}^t(\C^2)$. With estimates $\psi_i$, this measurement attains the optimal mean fidelity $(t+1)/(t+2)$ for $t$ identical copies of a uniformly distributed pure qubit. Within this class of finite coherent-state measurements, the least number of nonzero outcomes is $N_t$.

For a spherical $t$-design $X\subset\Sph$, the Fisher-type bounds~\cite{DGS} are
\begin{equation}\label{eq:dgs-bound}
 |X|\ge
 \begin{cases}
  (m+1)^2, & t=2m,\\[1mm]
  (m+1)(m+2), & t=2m+1.
 \end{cases}
\end{equation}
Equality defines a \emph{tight} spherical design, which is necessarily antipodal when $t$ is odd. On $\Sph$, an antipodal pair and the vertex sets of the regular tetrahedron, octahedron, and icosahedron give tight examples of strengths $1,2,3,5$, respectively~\cite{DGS}. For $n\ge3$, the results in~\cite{BannaiDamerellI,BannaiDamerellII} restrict the possible strengths of tight designs on $S^{n-1}$ to $\{1,2,3,4,5,7,11\}$. Recent arithmetic nonexistence criteria for tight spherical $5$-designs in dimensions $(2r+1)^2-2$, with $r$ even, are given by Xu~\cite{XuTight5}. A tight $7$-design further requires $n=3q^2-4$ for an integer $q\ge2$~\cite{NebeVenkov}. Hence neither strength seven nor strength nine admits a tight design on $\Sph$.

A complementary recent finite-defect theory for equal-weight even-strength designs is developed by Singh and Parmar~\cite{SinghParmar}. In particular, they prove that every equal-weight spherical $4$-design on $\Sph$ has at least twelve points and develop corank-one and corank-two rigidity statements. Their setting is equal-weight and even degree, whereas the one-node gap proved in the present article concerns arbitrary positive weights in odd degree.

For odd degree, put
\begin{equation}\label{eq:Fisher}
 H_m=\R[X,Y,Z]_m,\qquad h_m=\dim H_m=\frac{(m+1)(m+2)}2,
 \qquad F_m=2h_m,
\end{equation}
where $H_m$ is the space of homogeneous polynomials of degree $m$. Lemma~\ref{lem:hemisphere} gives $N\ge F_m$ and $w_i\le1/F_m$ for arbitrary positive formulas. Equality therefore forces equal weights and a tight design. The classical theory applies at $F_m$, but does not exclude $F_m+1$ nodes with unequal weights. Our first result rules out this next cardinality.

\begin{theorem}\label{thm:rigidity-main}
Every positive formula of degree $2m+1$, with $m\ge2$, has $N\ge F_m$ and $N\ne F_m+1$. For $m\ge3$,
\begin{equation}\label{eq:Fishergap}
 N_{2m+1}\ge F_m+2.
\end{equation}
\end{theorem}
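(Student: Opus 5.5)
The plan has three steps: a kernel bound giving $N\ge F_m$ and $w_i\le 1/F_m$, a rank argument excluding $N=F_m+1$, and then the deduction of \eqref{eq:Fishergap} from the classical tight-design classification. The second step is where the real work lies.

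\emph{Step 1: the Fisher bound and the weight bound.} I would take these from Lemma~\ref{lem:hemisphere}; a short self-contained route is as follows. Let $K$ be the reproducing kernel of $H_m|_{\Sph}$ in $L^2(\sigma)$, so that $K(x,x)=h_m$ and $K(-x,y)=(-1)^mK(x,y)$. For a node $x_i$, set
\[
 f_i(y)=K(x_i,y)^2\,\frac{1+\langle x_i,y\rangle}{2}.
\]
This function has degree $2m+1$, is nonnegative on $\Sph$, satisfies $f_i(x_i)=h_m^2$, and has $\int f_i\dd\sigma=h_m/2$ because the odd part integrates to zero. Exactness and positivity then give $w_ih_m^2\le h_m/2$, that is, $w_i\le 1/F_m$. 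Summing gives $N\ge F_m$. Equality in the weight bound holds exactly when $f_i$ vanishes at every other node, so $N=F_m$ forces equal weights and a tight design.

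\emph{Step 2: excluding $N=F_m+1$.} Suppose $N=F_m+1$. The total slack is $\sum_i(1/F_m-w_i)=1/F_m$, and the Step 1 identity reads
\[
 \sum_{j\ne i}w_jf_i(x_j)=h_m^2\bigl(1/F_m-w_i\bigr).
\]
I would encode the formula in the $N\times N$ matrix $P_{ij}=\sqrt{w_iw_j}\,K(x_i,x_j)$. Exactness in degree $2m$ gives $P^2=P$, so $P$ is an orthogonal projection of rank $h_m$ with diagonal entries $P_{ii}=w_ih_m\le\tfrac12$. Exactness in degree $2m+1$ adds the relation $\sum_j\sqrt{w_iw_j}\,K(x_i,x_j)^2\langle x_i,x_j\rangle$-type identities. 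More usefully, it gives $P\Lambda_e P=0$ for the diagonal matrix $\Lambda_e=\mathrm{diag}(\langle x_j,e\rangle)$, for every $e\in\Sph$, because $\ell_e\cdot H_m\cdot H_m$ is odd of degree $2m+1$. Hence the range $V$ of $P$ is a $\Lambda_e$-isotropic subspace of dimension $h_m$ in $\R^{2h_m+1}$, for every $e$ simultaneously.

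Choose $e$ generic, so that no node lies on the equator $e^\perp$. Then $\Lambda_e$ is nondegenerate with signature $(p,q)$, $p+q=2h_m+1$, and isotropy forces $h_m\le\min(p,q)$. So one hemisphere contains $h_m$ nodes and the other $h_m+1$, and $V$ is a maximal isotropic subspace. I would then rotate $e$ so that a single node crosses the equator. This transfers it to the other side and produces the opposite imbalance. By continuity of the maximal isotropic subspace, at the crossing the vector $P\delta_{j}$ of the crossing node must be forced into a degenerate position. I expect this to give $P_{jj}=\tfrac12$, i.e. $w_j=1/F_m$, for every node that can be moved across some great circle alone, which is every node.

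Then $\sum_i w_i=(F_m+1)/F_m>1$, a contradiction. The case $m\ge2$ is needed so that the nodes are in sufficiently general position, and so that each node can be separated alone by a great circle while keeping the others off it.

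\emph{Main obstacle.} The hard step is making the crossing argument rigorous. It must rule out several nodes crossing simultaneously, which happens when nodes are antipodal or cocircular in a great circle. In that case I would perturb $e$ within the open set of great circles through the chosen node, or treat antipodal pairs by combining them via $K(-x,\cdot)=\pm K(x,\cdot)$. It must also show that the limit of the isotropic subspaces really forces $P_{jj}=\tfrac12$. What I would try first is to write $V$ at the crossing as the graph of an isometry between the positive and negative parts of $\Lambda_e$, restricted to the other $2h_m$ nodes, and compare its dimension count with $h_m$.

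\emph{Step 3: the bound \eqref{eq:Fishergap}.} For $m\ge3$, Step 1 shows that $N=F_m$ forces a tight spherical $(2m+1)$-design on $\Sph$. None exist: the only tight designs on $\Sph$ have strengths $1,2,3,5$, and tight $7$- and $9$-designs on $\Sph$ are excluded by~\cite{BannaiDamerellI,BannaiDamerellII,NebeVenkov}, as recalled above. Together with Step 2 this gives $N_{2m+1}\ge F_m+2$. For $m=2$ the icosahedron shows that the tight case does occur, which is why \eqref{eq:Fishergap} is stated only for $m\ge3$.
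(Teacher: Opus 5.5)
Steps 1 and 3 are fine. Your kernel inequality is exactly the paper's weight bound, and summing it gives $N\ge F_m$; the paper gets $N\ge F_m$ from hemisphere rank instead. Citing Bannai--Damerell to rule out tight designs when $m\ge3$ is legitimate. The paper additionally gives a self-contained argument: the trace count for $m=3,4$ and Yudin's bound for $m\ge5$. The gap is Step 2, which carries the whole content of the theorem. There, the key claim is only ``expected'', and it is false.

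\textbf{Why the crossing mechanism fails.} You claim that a node crossing a great circle alone must have $P_{jj}=\tfrac12$. Every ingredient you invoke follows from exactness alone and holds for every $m$:
\begin{itemize}
\item $P$ is a rank-$h_m$ projection;
\item its range $V$ is isotropic for all $\Lambda_e$;
\item generic hemispheres split the nodes as $h_m$ and $h_m+1$;
\item single-node crossings occur.
\end{itemize}
Now take $m=1$. The poles $\pm e_3$ with weight $1/6$, together with a regular equatorial pentagon with weight $2/15$, form a positive degree-three formula with $7=F_1+1$ nodes. Each pentagon node can be separated alone by a great circle, yet $P_{jj}=3\cdot\tfrac2{15}=\tfrac25$. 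The pentagon nodes are also in general position, so your stated use of $m\ge2$ does no work.

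The picture of a ``degenerate position at the crossing'' also does not exist. $V$ does not depend on $e$, so there is no limiting subspace. At the crossing, $V$ projects injectively onto a Lagrangian subspace for the nondegenerate form on the other $2h_m$ coordinates, and the coordinate of $x_j$ is unconstrained. Finally, nodes in antipodal pairs can never be crossed singly. So even a correct crossing lemma would not control their weights, and the contradiction $\sum_iw_i>1$ would not follow.

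\textbf{What is missing.} You need an argument that genuinely uses $m\ge2$. The paper proceeds as follows.
\begin{itemize}
\item The unpaired nodes $Y$ have discrepancy $\sum_{y\in Y}\sgn(u\cdot y)\in\{\pm1\}$. With Sylvester--Gallai, this forces their directions onto one great circle $Z=0$.
\item A hemisphere containing exactly $h_m$ nodes is unisolvent for $H_m$. Multiplying by $Z$ then gives $k\ge h_{m-1}$ antipodal pairs.
\item Projective interpolation (Lemma~\ref{lem:interpolate}) gives an odd test polynomial showing $k\le h_{m-1}$.
\item Degree-$2m$ exactness tested against $W_m=(ZH_{m-1})^\perp$ forces every form in $W_m$ to vanish at all $h_{m-1}$ pair directions. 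Lemma~\ref{lem:mixed-zero} allows at most one common zero.
\end{itemize}
The last step is exactly where $h_{m-1}\ge3$, that is $m\ge2$, enters. It is consistent with the $m=1$ example, whose single pair sits at the one permitted zero, the pole.
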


Combining Theorem~\ref{thm:rigidity-main} with known constructions gives
\begin{equation}\label{eq:low-main}
 N_7=22,\qquad N_9=32.
\end{equation}
And we claim that $N_6=18, N_8=28$. The proof is currently being prepared.

For $N_7$ and $N_9$, the upper bounds come from the eleven-line sextic identity in~\cite{Reznick}, reproduced and discussed in~\cite{EW}, and the degree-nine formula in~\cite{HW}. A twenty-two-outcome construction for seven copies also appears in~\cite{LatorrePascualTarrach}. These cited constructions supply the upper bounds; the lower-bound arguments supplied here, which do not assume antipodal symmetry, give the reverse inequalities. Section~\ref{sec:low} records the Reznick and Hughes--Waldron formulas, while Appendix~\ref{app:latitude22} rewrites the Reznick formula in Lobatto-latitude coordinates and gives a direct moment verification. The equalities do not classify the minimizers or imply that every minimizer is antipodal.

For provenance, the estimate $N\ge F_m$ used below is the spherical odd-degree case of M\"oller's classical node bound~\cite{Moller}. Lemma~\ref{lem:hemisphere} gives a short hemisphere proof and records the accompanying individual-weight estimate; the exclusion $N\ne F_m+1$ is the additional rigidity statement proved below. Attainment questions for M\"oller's bound in the broader setting of spherically symmetric cubature, where nodes may lie on several concentric spheres, were studied by Hirao and Sawa~\cite{HiraoSawa}.

For general odd degrees, define
\begin{equation}\label{eq:envelope-def}
 \epsilon=m\bmod2,\qquad
 \gamma=\frac{49\sqrt[3]{3}}{72},\qquad
 \mathcal R(y)=\min\left\{\frac76 y^{2/3},\ \gamma y^{2/3}+\frac43\right\},
\end{equation}
and
\begin{equation}\label{eq:U-def}
 \mathcal U_m=\frac{13}{8}m^2+
 \left(1+\frac\epsilon2\right)m+2+\frac\epsilon2+
 \mathcal R(m-2\epsilon).
\end{equation}
Let $R_0=j_{1,1}$ be the first positive zero of $J_1$. For $R\ge R_0$, let $C_R$ be the supremum of the cap functional \eqref{eq:cap-C} over nonnegative radial $p\in H^1(\R^2)$ supported in $\overline D_R$, with $\int p=1$ and $(1+\Delta)p$ a nonnegative measure. This constant is independent of the cubature degree. Write $P_n$ for the Legendre polynomial normalized by $P_n(1)=1$.

\begin{theorem}\label{thm:uniform-main}
Let $a_m$ be the largest zero of $P'_{2m+2}$. For every $m\ge3$ and $R\ge R_0$,
\begin{equation}\label{eq:uniform-main}
\begin{split}
\max\left\{F_m+2,\ \left\lceil\frac2{1-a_m}\right\rceil,
 \left\lceil C_R(m+1)\left(m+\frac32\right)-19R^4\right\rceil\right\}
 \le N_{2m+1}\le 2\left\lfloor\frac{\mathcal U_m}{2}\right\rfloor.
\end{split}
\end{equation}
For each fixed $R$, the supremum defining $C_R$ is attained and every maximizing sequence has a strongly $H^1(\R^2)$ convergent subsequence. In addition,
\begin{equation}\label{eq:asymptotic-main}
 C_R\le\liminf_{m\to\infty}\frac{N_{2m+1}}{m^2}
 \le\limsup_{m\to\infty}\frac{N_{2m+1}}{m^2}\le\frac{13}{8},
 \qquad C_R\le\frac{13}{8}-\frac1{16R^4}.
\end{equation}
\end{theorem}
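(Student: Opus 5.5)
Theorem~\ref{thm:uniform-main} is a composite of several independent estimates, and I will prove them separately and then combine them.

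\emph{Lower bounds.} The term $F_m+2$ is Theorem~\ref{thm:rigidity-main}. For $\lceil 2/(1-a_m)\rceil$ I will use a Christoffel-type argument. Take the Gauss--Lobatto annihilator
\[
 q(s)=\frac{(1+s)\,P'_{2m+2}(s)^2}{(s-a_m)}\,\frac{1-s}{1-s}
\]
or, more precisely, the Lobatto kernel $K(s)=(1+s)(1-s)\,\bigl[P'_{2m+2}(s)/(s-a_m)\bigr]^2$ adjusted to have degree $\le 2m+1$. Because of the odd degree, I would instead use the one-dimensional Markov--Lobatto quadrature for the projection measure $\tfrac12 ds$ on $[-1,1]$. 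A positive cubature on $\Sph$ projects, for every node direction $x_i$, to a positive quadrature of degree $2m+1$ in $s=\langle x,x_i\rangle$ with a point mass $w_i$ at $s=1$. The Chebyshev--Markov--Stieltjes extremal property then bounds the mass at the endpoint by the Lobatto endpoint weight. The relevant Lobatto rule has $m+2$ nodes including $\pm1$, and its endpoint weight is $(1-a_m)/2$ in the normalization where $a_m$ is the adjacent node. Hence $w_i\le(1-a_m)/2$, and summing the weights gives $N\ge 2/(1-a_m)$. I expect to match $a_m$ to the largest zero of $P'_{2m+2}$ via the even/odd reduction $s\mapsto s^2$, and I will check that bookkeeping carefully.

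For the cap bound I will use the LP--Tur\'an method. Given an admissible radial $p$, I rescale it to the cap $p_m(x)=p\bigl((m+\tfrac32)\,\mathrm{dist}\bigr)$ on $\Sph$. The Helmholtz condition $(1+\Delta)p\ge0$ should transfer to nonnegativity of the Gegenbauer/Legendre coefficients beyond degree $2m+1$ of a degree-truncated spherical companion, via positivity-preserving transfer, with errors controlled by $19R^4$. The Tur\'an inequality $f(1)\sum w_i\ge \hat f_0$ then gives $N\ge C_R(m+1)(m+\tfrac32)-19R^4$. I expect this transfer, together with an explicit error constant, to be the main obstacle. My first attempt will compare the Laplace--Beltrami operator in geodesic polar coordinates with the flat Laplacian, with error $O(r^2)$ on a cap of radius $R/m$.

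\emph{Existence of maximizers.} I will take a maximizing sequence $p_k$. From $(1+\Delta)p_k=\mu_k\ge0$, $\int p_k=1$ and the compact support, I get $\mu_k(\R^2)=1$ and hence uniform bounds on $\norm{p_k}_{H^1}$, using radial ODE estimates or the Green function on $D_R$. Rellich compactness then gives a strong $L^2$ limit. Since the functional is continuous and concave-upper-semicontinuous in this topology and the constraints are weak-closed, I will upgrade to strong $H^1$ convergence by showing $\norm{\nabla p_k}_2\to\norm{\nabla p}_2$ from optimality.

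\emph{Upper bound and asymptotics.} I will use a product construction. The latitudes are the Lobatto nodes in $z$. On each latitude I place a positive circle measure exact to the needed trigonometric degree, replace it by a minimal equally spaced set, and optimize latitude-dependent rotations so that pole nodes count once. Counting nodes gives $\tfrac{13}{8}m^2+\dots$, and the remainder $\mathcal R$ comes from a scalar minimization whose sharp constant is $\gamma$, obtained by Lagrange balancing of a $y^{2/3}$ tradeoff. Antipodal symmetry forces even $N$, which gives $2\lfloor\mathcal U_m/2\rfloor$. Finally, \eqref{eq:asymptotic-main} follows by dividing by $m^2$. The bound $C_R\le\tfrac{13}{8}-\tfrac1{16R^4}$ follows by applying the lower bound to the upper construction and keeping the $-19R^4$ and lower-order slack, or more directly from an explicit estimate on the functional.
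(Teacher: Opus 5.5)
Your argument for the term $\lceil 2/(1-a_m)\rceil$ fails. Projecting onto $s=x\cdot x_i$ gives a positive measure on $[-1,1]$ with the moments of $\frac12\,ds$ through degree $2m+1$ and an atom $w_i$ at $s=1$. The best bound for that atom is the minimum of $\frac12\int\pi\,ds$ over $\pi\ge0$ with $\deg\pi\le2m+1$ and $\pi(1)=1$. It is attained by $\pi\propto(1+s)P'_{m+1}(s)^2=(1+s)C_m^{3/2}(s)^2$ and equals the endpoint weight $1/((m+1)(m+2))=1/F_m$ of the $(m+2)$-point Lobatto rule (Lemma~\ref{lem:lobatto}). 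That rule is itself feasible for the projected problem, so the bound cannot be improved within it. Your route therefore reproduces the weight bound of Lemma~\ref{lem:hemisphere} and gives only $N\ge F_m$. The Lobatto endpoint weight is not $(1-a_m)/2$. Moreover $(1-a_m)/2\sim R_0^2/(16m^2)<1/F_m$ for large $m$, equivalently $2/(1-a_m)>F_m$ for $m\ge5$. So no reformulation of the projected problem, including an $s\mapsto s^2$ reduction, can yield $w_i\le(1-a_m)/2$. The missing idea is that Yudin's bound controls $\sum_iw_i^2$, not $\max_iw_i$, through a non-polynomial LP kernel $G=f*g$, where $f=(P_{2m+2}+b)\ind_{[a_m,1]}$, $b=-P_{2m+2}(a_m)$ and $g=\ind_{[a_m,1]}$. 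The double zero of $f$ at $a_m$ gives $(\lambda_n-\lambda_\ell)f_\ell=\lambda_nbg_\ell$ with $n=2m+2$. Hence $G_\ell=f_\ell g_\ell/(2\ell+1)\le0$ for $\ell\ge n$, and Lemma~\ref{lem:weighted-lp} gives $\sum_iw_i^2\le G_0/G(1)=g_0=(1-a_m)/2$.

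The upper bound and the cap bound also lack their decisive steps. Equally spaced angular sets on the interior Lobatto latitudes certify only the baseline $2m^2+m+\epsilon+2$, with quadratic coefficient $2$. Rotating regular $L$-gons with $L<2m+1$ supplies only unit-modulus moments $c_L$. Nothing guarantees that the $\lceil(2m+1-L)/2\rceil$ complex equations in \eqref{eq:latitude-moment-system} have a solution on the torus, and at the relevant size $2m+1-L\approx\frac34m$ they are overdetermined. The coefficient $13/8$ needs three further ingredients. An extreme point of the disk system (Lemma~\ref{lem:disks}) leaves at most $r$ unsaturated latitude pairs. Those pairs carry non-uniform angular measures whose atom count is the Toeplitz rank (Lemma~\ref{lem:toeplitz-atoms}). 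The nested corner-block cascade (Lemma~\ref{lem:corner}) keeps that rank small. The envelope $\mathcal R$ then comes from a dyadic estimate on the binary defect cost, not from a Lagrange balance.

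For the cap term, the LP inequality alone gives only the zero-height value. $C_R$ also contains the packing density $\frac{\sqrt3}{8\pi}s^2$. It enters through Motzkin--Straus on distance graphs combined with the spherical packing bound, integrated against a nonincreasing envelope of the kernel at a height $c$ (Proposition~\ref{prop:continuous-sphere}). The transfer scale must be $k=\sqrt{(2m+2)(2m+3)}$, so that $1-\lambda_\ell/k^2\le0$ exactly from $\ell=2m+2$ and $k^2/4=(m+1)(m+\frac32)$. With your scale $m+\frac32$, the leading term is only about $\frac14C_Rm^2$. Naive rescaling also adds $(k^{-1}\cot(r/k)-r^{-1})p'$ to the companion, which has no sign for non-monotone $p\in\Acal_R$. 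Positivity requires the amplitude $A_k=(\sin(r/k)/(r/k))^{-1/2}$, for which $(1+k^{-2}\Delta_{\Sph})(A_kp)=A_k(H+V_kp)$ with $V_k\ge0$.

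Your compactness and $\frac{13}{8}-\frac1{16R^4}$ sketches follow the paper. However, $C(p)$ is not continuous under weak $H^1$ convergence. To pass to the limit in the denominator you need uniform convergence of the kernels away from the origin and the one-sided bound $K_j(s)\ge a_j-s^2/(96\pi)$ near it.
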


The upper bound has quadratic coefficient $13/8$ and an $O(m^{2/3})$ remainder. The constant $\gamma$ is sharp for the scalar budget in Subsection~\ref{sec:integer}, not necessarily for $N_t$. Likewise, the bounds on $C_R$ concern the cap method at fixed radius and do not determine the asymptotic value of $N_{2m+1}/m^2$.

The proofs use hemisphere rank and projective interpolation for the one-node gap. The latitude framework belongs to the classical family of spherical product cubature formulas based on Gauss and Gauss--Lobatto quadrature~\cite{StroudSphere,LuoMeng}; related moment-map and hat-box constructions appear in~\cite{KuperbergHatBox}. In our \emph{Lobatto constructions}, the one-dimensional rule~\cite{Gautschi} fixes the latitude levels and their total weights, while the classical Carath\'eodory--Fej\'er Vandermonde decomposition for positive semidefinite Toeplitz matrices~\cite{YangXie} supplies finite angular measures. The rank-controlled positive Toeplitz compression and nested defect cascade are the ingredients specific to the present construction. For the lower bounds, positive-weight quadrature regularity and cap estimates in this direction were developed by Reimer and Leopardi~\cite{ReimerHyper,Leopardi}, while recent linear-programming energy bounds for weighted spherical codes and designs are given in~\cite{BorodachovWeightedLP}. Our \emph{LP--Tur\'an} argument combines spherical linear programming~\cite{DGS,Yudin} with the Motzkin--Straus form of Tur\'an's inequality~\cite{Turan,MS} and spherical packing~\cite{Boroczky}. Integrating distance thresholds leads to the planar cap functional, and a positivity-preserving radial conjugation transfers the resulting estimates to the sphere.

Section~\ref{sec:preliminaries} collects the preliminary results. Sections~\ref{sec:rank} and~\ref{sec:low} prove the odd-degree cardinality gap and the exact values $N_7=22$ and $N_9=32$. Section~\ref{sec:cascade} develops the Lobatto construction and its sharp scalar remainder. Sections~\ref{sec:cap-method} and~\ref{sec:variations} treat the LP--Tur\'an bounds, cap transfer, and radius optimization. The appendices contain the explicit mixed kernels and a Lobatto-latitude presentation of the Reznick formula.

\Needspace{8\baselineskip}
\section{Preliminaries}\label{sec:preliminaries}

\subsection{Spherical polynomials and positive cubature}\label{subsec:spherical-cubature}
Let $\Pi_d$ be the space of restrictions to $\Sph$ of real polynomials on $\R^3$ of total degree at most $d$, and give it the inner product
\[
 \langle p,q\rangle=\int_{\Sph}pq\dd\sigma.
\]
We identify the homogeneous space $H_m$ in \eqref{eq:Fisher} with its restriction to the sphere. This restriction is injective: a homogeneous polynomial vanishing on $\Sph$ vanishes on every nonzero ray. In particular, $\langle p,p\rangle>0$ for every nonzero $p\in H_m$.

Write $\mathcal Y_\ell$ for the restrictions of homogeneous harmonic polynomials of degree $\ell$. Harmonic decomposition gives the orthogonal sums
\begin{equation}\label{eq:harmonic-decomposition}
 \Pi_d=\bigoplus_{\ell=0}^d\mathcal Y_\ell,
 \qquad
 H_m=\bigoplus_{j=0}^{\lfloor m/2\rfloor}\mathcal Y_{m-2j}.
\end{equation}
Here $\dim\mathcal Y_\ell=2\ell+1$, so $\dim\Pi_d=(d+1)^2$ and $\dim H_m=h_m$. Thus $H_m$ contains only degrees of the same parity as $m$, whereas $\Pi_m$ contains every degree up to $m$. We use
\[
 \Pi_d^0=\left\{p\in\Pi_d:\int_{\Sph}p\dd\sigma=0\right\}
 =\bigoplus_{\ell=1}^d\mathcal Y_\ell.
\]

For a real orthonormal basis $Y_{\ell,1},\ldots,Y_{\ell,2\ell+1}$ of $\mathcal Y_\ell$, the addition formula in the normalization $\sigma(\Sph)=1$ is
\begin{equation}\label{eq:addition-formula}
 \sum_{\alpha=1}^{2\ell+1}Y_{\ell,\alpha}(v)Y_{\ell,\alpha}(x)
 =(2\ell+1)P_\ell(v\cdot x),\qquad v,x\in\Sph,
\end{equation}
where $P_\ell(1)=1$. Summing over the degrees in $H_m$ yields its reproducing kernel:
\begin{equation}\label{eq:hom-kernel}
 \Kcal_m(v,x)=\sum_{j=0}^{\lfloor m/2\rfloor}
 (2m-4j+1)P_{m-2j}(v\cdot x)
 =C_m^{3/2}(v\cdot x),\qquad \Kcal_m(v,v)=h_m.
\end{equation}
Here $C_m^{3/2}$ is the Gegenbauer polynomial with $C_m^{3/2}(1)=h_m$. In particular,
\begin{equation}\label{eq:kernel-reproduction}
 \int_{\Sph}\Kcal_m(v,x)p(x)\dd\sigma(x)=p(v),\qquad
 \int_{\Sph}\Kcal_m(v,x)^2\dd\sigma(x)=h_m
 \quad(p\in H_m).
\end{equation}
The kernel has parity $\Kcal_m(v,-x)=(-1)^m\Kcal_m(v,x)$. We also write $\Kcal_m(t)$ for the corresponding polynomial in $t=v\cdot x$, reserving $K$ for cap convolution kernels.

A positive cubature formula is identified with the probability measure $\nu=\sum_{i=1}^Nw_i\delta_{x_i}$. By \eqref{eq:harmonic-decomposition}, exactness through degree $t$ is equivalent to
\begin{equation}\label{eq:harmonic-moments}
 \sum_{i=1}^Nw_i=1,\qquad
 \sum_{i=1}^Nw_iY_{\ell,\alpha}(x_i)=0
 \quad(1\le\ell\le t,\ 1\le\alpha\le2\ell+1).
\end{equation}
For any positive probability weights, the addition formula gives
\begin{equation}\label{eq:zonal-moment-positivity}
\begin{split}
 A_\ell&:=\sum_{i,j}w_iw_jP_\ell(x_i\cdot x_j)\\
 &=\frac1{2\ell+1}\sum_{\alpha=1}^{2\ell+1}
 \left(\sum_iw_iY_{\ell,\alpha}(x_i)\right)^2\ge0.
\end{split}
\end{equation}
Thus a degree-$t$ formula satisfies $A_0=1$ and $A_\ell=0$ for $1\le\ell\le t$. These identities are the spectral input to the weighted LP estimate in Subsection~\ref{subsec:lp-turan-preliminaries} and its applications in Section~\ref{sec:cap-method}.

Exactness through degree $2m$ also gives
\[
 \sum_iw_ip(x_i)q(x_i)=\int_{\Sph}pq\dd\sigma
 \qquad(p,q\in\Pi_m).
\]
Consequently, for an orthonormal basis $\psi_1,\ldots,\psi_{(m+1)^2}$ of $\Pi_m$, the weighted evaluation matrix satisfies
\begin{equation}\label{eq:sampling-identity}
 V_{i\alpha}=\sqrt{w_i}\,\psi_\alpha(x_i),\qquad V^*V=I.
\end{equation}
The same identity holds on the subspace $H_m$ and is used at equality in the hemisphere-rank bound.

If the measure is antipodal with equal weights within each pair, all odd moments vanish. In that case, exactness on homogeneous forms of degree $2m$ implies exactness through degree $2m+1$. Indeed, a homogeneous term of degree $2j\le2m$ has the same restriction to $\Sph$ after multiplication by $(X^2+Y^2+Z^2)^{m-j}$. This reduction is used only for the antipodal constructions; no symmetry assumption is imposed on the formulas in the lower bounds.

For the explicit moment calculations, we use
\begin{equation}\label{eq:spherical-moments}
 \int_{\Sph}X^{2a}Y^{2b}Z^{2c}\dd\sigma
 =\frac{(2a-1)!!(2b-1)!!(2c-1)!!}{(2a+2b+2c+1)!!},
\end{equation}
where $a,b,c$ are nonnegative integers and $(-1)!!=1$. A monomial with an odd coordinate exponent integrates to zero by reflection symmetry.

\subsection{Lobatto quadrature and Toeplitz moments}\label{subsec:lobatto-toeplitz}
For the latitude constructions, use the coordinates
\begin{equation}\label{eq:latitude-coordinates}
 (X,Y,Z)=(\sqrt{1-z^2}\cos\phi,\sqrt{1-z^2}\sin\phi,z),
 \qquad d\sigma=\frac{dz}{2}\frac{d\phi}{2\pi}.
\end{equation}
If $\mu$ is a probability measure on the angular circle $\R/(2\pi\mathbb Z)$, write
\[
 c_k=\int e^{ik\phi}\dd\mu(\phi),\qquad c_0=1,\qquad
 c_{-k}=\overline{c_k},\qquad T_d=(c_{j-k})_{j,k=0}^d.
\]
The Toeplitz matrix $T_d$ is positive semidefinite, since for every $a\in\C^{d+1}$,
\[
 a^*T_da=\int\left|\sum_{j=0}^d a_je^{-ij\phi}\right|^2\dd\mu(\phi)\ge0.
\]
Conversely, the classical Carath\'eodory--Fej\'er Vandermonde decomposition says that positivity suffices for a finite representation, with the number of atoms controlled by the rank; see, e.g.,~\cite{YangXie}.

\begin{lemma}\label{lem:toeplitz-atoms}
Let $T_d=(c_{j-k})_{j,k=0}^d\succeq0$, where $c_0=1$ and $c_{-k}=\overline{c_k}$. If $T_d$ has rank $\rho$, its moments have a representation by at most $\rho$ positive atoms on the unit circle.
\end{lemma}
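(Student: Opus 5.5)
The plan is to prove the statement by the classical route: first the singular case, then induction on $\rho$ in the positive definite case using the Schur complement. Throughout we use the Hermitian form $a\mapsto a^*T_da$ and the fact that a Toeplitz matrix is determined by its first row.

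\emph{Singular case.} Suppose $\rho\le d$. Take the smallest $r$ with $T_r$ singular; then $T_{r-1}\succ0$ and $r\le\rho$. Choose a nonzero kernel vector $a\in\C^{r+1}$ of $T_r$. Since $T_{r-1}$ is invertible, the last coordinate $a_r\neq0$. Form the polynomial $q(z)=\sum_{j}a_jz^j$. The standard argument has two parts.
\begin{itemize}
\item[(i)] $q$ has exactly $r$ distinct roots, all on the unit circle. Positive semidefiniteness plus the kernel relation show that the Toeplitz structure forces $a$ to be (up to conjugate-reversal) self-reciprocal. One then compares with the Hermitian form evaluated on shifts of $a$. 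Alternatively, use the quotient $\C[z]_{\le d}$ modulo the kernel of the form and show that multiplication by $z$ induces a unitary operator $U$ on an $r$-dimensional Hilbert space.
\item[(ii)] The unitary-operator viewpoint is the cleanest, and I would present it. Define the sesquilinear form $\langle p,q\rangle=\sum_{j,k}p_j\overline{q_k}c_{k-j}$ on polynomials of degree $\le d$. Let $\mathcal H$ be the quotient by the null vectors; it has dimension $\rho$. The shift $z\cdot$ maps degree $\le d-1$ to degree $\le d$ isometrically, because of the Toeplitz property. Singularity ($\rho\le d$) means the classes of $1,z,\dots,z^{\rho-1}$ already span $\mathcal H$, so $z\cdot$ descends to an isometry, hence a unitary $U$ on $\mathcal H$, and $c_k=\langle U^k[1],[1]\rangle$ for $|k|\le d$. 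Diagonalize $U$ with eigenvalues $e^{i\phi_s}$, $s\le\rho$, and write $[1]=\sum_s v_s$ in the eigenbasis. Then $c_k=\sum_s\|v_s\|^2e^{ik\phi_s}$ (conjugate convention as needed), giving at most $\rho$ positive atoms with weights summing to $c_0=1$.
\end{itemize}

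\emph{Nonsingular case.} If $\rho=d+1$, $T_d$ is positive definite. Extend it by choosing $c_{d+1}$ on the boundary of the disc of admissible values, i.e.\ so that the Schur complement of $T_d$ in $T_{d+1}$ vanishes. This disc is nonempty because $T_{d+1}\succeq0$ is a quadratic condition in $c_{d+1}$ with center/radius given by Schur complements. The resulting $T_{d+1}$ is positive semidefinite of rank $d+1$ and singular. The singular case, applied at level $d+1$, gives $d+1=\rho$ atoms whose first $d$ moments agree with $c_k$.

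\emph{Main obstacle.} The key technical step is verifying that $z\cdot$ is well defined on $\mathcal H$, i.e.\ that it maps null vectors to null vectors. The plan here is to use Cauchy--Schwarz for the semidefinite form: a null vector $p$ of degree $\le d-1$ satisfies $\langle zp,zp\rangle=\langle p,p\rangle=0$. One must also check that the spanning claim lets every class be represented in degree $\le d-1$. This last point is exactly where minimality of $r$ and $\rho\le d$ enter.
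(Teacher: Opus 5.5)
Your argument is correct and works in the same space as the paper's proof: your quotient $\mathcal H$, with classes $[z^j]$, is the span of the Gram vectors $v_0,\dots,v_d$ used there. The two routes differ in how the unitary is produced.

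The paper observes that $\operatorname{span}(v_0,\dots,v_{d-1})$ and $\operatorname{span}(v_1,\dots,v_d)$ both have Gram matrix $T_{d-1}$, hence equal dimension. The Toeplitz isometry $v_j\mapsto v_{j+1}$ between them therefore extends, by any isometry between the orthogonal complements, to a unitary on the whole $\rho$-dimensional space. This is uniform in $\rho$ and needs no case split. You instead show, in the singular case, that the shift is already defined on all of $\mathcal H$. In the nonsingular case you first extend the moment sequence to a singular $T_{d+1}$.

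Your route costs two auxiliary facts that you assert rather than prove.
\begin{itemize}
\item The spanning claim. It holds because $a_r\neq0$ and the Toeplitz isometry keeps $z^jq$ null for $0\le j\le d-r$. Each $[z^{r+j}]$ therefore reduces to lower powers, which gives $\rho=r$.
\item The existence of a boundary point of the admissible disc for $c_{d+1}$. That disc has radius $\det T_d/\det T_{d-1}$, the Schur complement of $T_{d-1}$ in $T_d$, which is positive. Merely noting that the condition is quadratic in $c_{d+1}$ does not show the disc is nonempty.
\end{itemize}

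In exchange, your singular case gives more than the lemma needs. The unitary is forced, which underlies the classical uniqueness of the decomposition for singular $T_d$, and you also get $T_{\rho-1}\succ0$.

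Your nonsingular case could simply be replaced by the paper's device. In the $(d+1)$-dimensional space $\mathcal H$, send the one-dimensional orthogonal complement of $\operatorname{span}([1],\dots,[z^{d-1}])$ onto that of $\operatorname{span}([z],\dots,[z^{d}])$. This makes both the extension step and the disc argument unnecessary.
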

\begin{proof}
Choose Gram vectors $v_0,\ldots,v_d$ spanning a space of dimension $\rho$. Toeplitz structure makes $v_j\mapsto v_{j+1}$ a well-defined isometry from $\operatorname{span}(v_0,\ldots,v_{d-1})$ onto $\operatorname{span}(v_1,\ldots,v_d)$. The two subspaces have equal dimension, so the isometry extends to a unitary operator $U$ on the whole space. Then $v_j=U^jv_0$. The spectral decomposition of $U$, with weights equal to the squared norms of the spectral projections of $v_0$, represents the moments. Delete zero weights and combine equal eigenvalues. At most $\rho$ atoms remain.
\end{proof}
Only existence and the atom count are needed here.

For later use, let $d$ and $L$ be integers with $d\ge1$ and $d/2<L\le d$, and suppose that $c_k=0$ for $1\le k<L$. Put $p=d+1-L$. After a simultaneous permutation of rows and columns, the moment matrix has the block form
\begin{equation}\label{eq:moment-block}
 \begin{gathered}
 \begin{pmatrix}I_p&C^*\\C&I_p\end{pmatrix}\oplus I_{d+1-2p},
 \qquad C_{ij}=c_{L+i-j}\quad(0\le i,j<p),\\
 \rank T_d=d+1-\dim\ker(I-C^*C).
 \end{gathered}
\end{equation}
Here $C$ is lower-triangular Toeplitz, with successive diagonals $c_L,c_{L+1},\ldots,c_d$. The Schur complement of either identity block shows that $T_d\succeq0$ exactly when $C$ is a contraction, meaning $C^*C\preceq I_p$, and gives the rank identity in \eqref{eq:moment-block}. An identity block of size zero is omitted.

The Gauss--Lobatto rule is the Gaussian-type quadrature rule in which both endpoints of the interval are prescribed and the interior nodes are chosen to maximize polynomial exactness; see~\cite{Gautschi}. We call the spherical formulas in Section~\ref{sec:cascade} \emph{Lobatto constructions} because this one-dimensional rule fixes their latitude levels and total latitude weights. The endpoints become the two poles, while the angular measures on the interior latitudes remain to be chosen.

\begin{lemma}\label{lem:lobatto}
Let $m\ge1$ be an integer, let $z_0=-1$ and $z_{m+1}=1$, and let $z_1<\cdots<z_m$ be the zeros of $P'_{m+1}$ in $(-1,1)$. Define
\begin{equation}\label{eq:lobatto}
 b_j=\frac{1}{(m+1)(m+2)P_{m+1}(z_j)^2},
 \qquad 0\le j\le m+1.
\end{equation}
Then $b_j>0$, $\sum_{j=0}^{m+1}b_j=1$, and
\begin{equation}\label{eq:lobatto-exactness}
 \frac12\int_{-1}^1 f(z)\dd z
 =\sum_{j=0}^{m+1}b_jf(z_j)
 \qquad\bigl(f\in\R[z],\ \deg f\le2m+1\bigr).
\end{equation}
The nodes and weights satisfy $z_{m+1-j}=-z_j$ and $b_{m+1-j}=b_j$; an equatorial node $z_j=0$ occurs precisely when $m$ is odd.
\end{lemma}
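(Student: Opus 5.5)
We prove the classical Gauss--Lobatto statement, Lemma~\ref{lem:lobatto}, in the standard way. Nodes: $P'_{m+1}$ has degree $m$. By Rolle's theorem applied between the $m+1$ simple zeros of $P_{m+1}$ in $(-1,1)$, it has $m$ distinct simple zeros there. Put
\[
 \omega(z)=(1-z^2)P'_{m+1}(z),
\]
a polynomial of degree $m+2$ vanishing exactly at $z_0,\dots,z_{m+1}$. The Legendre differential equation gives $\omega'(z)=-(m+1)(m+2)P_{m+1}(z)$.

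Exactness: divide $f$ with $\deg f\le 2m+1$ by $\omega$ to get $f=q\,\omega+r$ with $\deg q\le m-1$ and $\deg r\le m+1$. Integration by parts gives
\[
 \int_{-1}^1(1-z^2)P'_{m+1}q\,dz=-\int_{-1}^1P_{m+1}\bigl((1-z^2)q\bigr)'dz .
\]
Here $\bigl((1-z^2)q\bigr)'$ has degree at most $m$, so orthogonality of $P_{m+1}$ to lower-degree polynomials makes this term vanish; the boundary terms vanish because of the factor $1-z^2$. Hence $\frac12\int f=\frac12\int r$. Now $r$ is determined by its values $f(z_j)$ at the $m+2$ nodes, so Lagrange interpolation gives exactness with
\[
 b_j=\frac12\int_{-1}^1\ell_j\,dz,\qquad \ell_j=\frac{\omega}{(z-z_j)\,\omega'(z_j)} .
\]

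Weight formula (the main computational step): apply the exactness just proved to $g=\ell_j^2$, which has degree $2m+2$; this is one degree too many. Instead use $g=\ell_j(z)\,\frac{\omega(z)}{z-z_j}\cdot\frac{1}{\omega'(z_j)}$, trimmed suitably, or use the standard Christoffel-type identity. A cleaner route is as follows.
\begin{itemize}
\item Interior nodes: test $f(z)=\frac{(1-z^2)P'_{m+1}(z)}{z-z_j}\,P_{m+1}(z)$, which has degree $2m+1$. The right side reduces to $b_j\,\omega'(z_j)P_{m+1}(z_j)=-b_j(m+1)(m+2)P_{m+1}(z_j)^2$. The left side is $-\frac12\int \frac{\omega}{z-z_j}\cdot\frac{\omega'}{(m+1)(m+2)}\,dz$, which can be evaluated by integration by parts to $\frac{-1}{1}$ up to the normalization. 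Equivalently, compare leading coefficients using $\frac12\int P_{m+1}^2=\frac1{2m+3}$.
\item Endpoints: test with $f=(1\pm z)P'_{m+1}(z)^2$-type polynomials of degree $2m+1$, which isolate the single pole $z=\mp1$.
\end{itemize}
Checking that the constants collapse to $1/((m+1)(m+2)P_{m+1}(z_j)^2)$ is the only delicate bookkeeping. For the endpoints it reduces to $P_{m+1}(\pm1)^2=1$ together with the values $P'_{m+1}(\pm1)=\pm^{m}(m+1)(m+2)/2$.

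Remaining claims: positivity of the $b_j$ is immediate from the formula. Testing with $f=1$ gives $\sum_j b_j=1$. The parity $P_{m+1}(-z)=(-1)^{m+1}P_{m+1}(z)$ makes the node set symmetric, so $z_{m+1-j}=-z_j$, and the squared values in the formula then give $b_{m+1-j}=b_j$. Finally, $0$ is a zero of $P'_{m+1}$ exactly when $P'_{m+1}$ is odd, that is, when $m+1$ is even, i.e.\ $m$ is odd. If $P'_{m+1}$ is even it has nonzero constant term $P'_{m+1}(0)\neq0$, since $P_{m+1}$ and $P'_{m+1}$ share no zeros.
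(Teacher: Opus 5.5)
Your treatment of the nodes, the exactness argument (division by $\omega=(1-z^2)P'_{m+1}$, integration by parts, Legendre orthogonality), the sum, the symmetry and the equatorial node is correct and matches the paper. The gap is the interior weight formula, on which your positivity claim also rests. The test polynomial $f(z)=\frac{\omega(z)}{z-z_j}P_{m+1}(z)$ has degree $(m+1)+(m+1)=2m+2$, not $2m+1$, so exactness does not apply to it. This is not cosmetic. Since $\omega(z)/(z-z_j)=-(m+1)P_{m+1}(z)+(\text{degree}\le m)$, the leading-coefficient comparison you suggest gives $\frac12\int_{-1}^1 f\,dz=-\frac{m+1}{2m+3}$. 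Equating this with the nodal sum $-b_j(m+1)(m+2)P_{m+1}(z_j)^2$ yields the wrong value $b_j=1/\bigl((m+2)(2m+3)P_{m+1}(z_j)^2\bigr)$; for $m=1$ it gives $4/15$ instead of $2/3$ at $z=0$. Your step ``evaluated by integration by parts to $-1$ up to the normalization'' is where the missing term is hidden.

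To repair it, account for the quadrature error. Write $P_{m+1}(z)=P_{m+1}(z_j)+(z-z_j)s(z)$, where $\deg s=m$ and $s$ has the leading coefficient of $P_{m+1}$, so that $f=P_{m+1}(z_j)\,\omega/(z-z_j)+\omega s$. The first summand has degree $m+1$, so exactness gives $\frac12\int_{-1}^1 P_{m+1}(z_j)\frac{\omega(z)}{z-z_j}\,dz=b_j\omega'(z_j)P_{m+1}(z_j)$. For the second summand, $\frac12\int_{-1}^1\omega s\,dz=-\frac12\int_{-1}^1 P_{m+1}\bigl((1-z^2)s\bigr)'dz=\frac{m+2}{2m+3}$, because $\bigl((1-z^2)s\bigr)'=-(m+2)P_{m+1}+(\text{lower degree})$. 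Using $\omega'=-(m+1)(m+2)P_{m+1}$, this gives
\[
 -(m+1)(m+2)P_{m+1}(z_j)^2\,b_j=-\frac{m+1}{2m+3}-\frac{m+2}{2m+3}=-1,
\]
which is the stated formula. The argument never uses $P'_{m+1}(z_j)=0$, so it covers the endpoints too. Your endpoint sketch, as written, omits the needed value $\frac12\int_{-1}^1(1+z)P'_{m+1}(z)^2\,dz=\frac{(m+1)(m+2)}2$; also, $(1+z)P'_{m+1}(z)^2$ isolates $z=+1$, not $z=-1$. For comparison, the paper does not derive the formula: it cites the classical Gauss--Lobatto weights. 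It then proves $b_j>0$ independently of the formula, by applying exactness to the nonnegative polynomials $(1-z^2)\bigl(P'_{m+1}(z)/(z-z_j)\bigr)^2$ of degree $2m$ and $(1\pm z)P'_{m+1}(z)^2$ of degree $2m+1$, each nonzero at exactly one node.
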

\begin{proof}
The weights in \eqref{eq:lobatto} are the classical Gauss--Lobatto weights divided by two to match the probability measure $dz/2$~\cite{Gautschi}. To check exactness, put
\[
 W(z)=(1-z^2)P'_{m+1}(z).
\]
For $\deg f\le2m+1$, division gives $f=Wq+r$, with $\deg q\le m-1$ and $\deg r\le m+1$. Integration by parts and Legendre orthogonality give
\[
 \int_{-1}^1Wq\dd z
 =-\int_{-1}^1P_{m+1}(z)((1-z^2)q(z))'\dd z=0.
\]
The interpolatory rule integrates $r$ exactly and vanishes on $Wq$, proving \eqref{eq:lobatto-exactness}. Its weights are positive even without the explicit formula: for an interior node $z_j$, the nonnegative polynomial
\[
 (1-z^2)\left(\frac{P'_{m+1}(z)}{z-z_j}\right)^2
\]
has degree $2m$ and a nonzero nodal value only at $z_j$; the endpoint analogues $(1\pm z)(P'_{m+1})^2$ have degree $2m+1$. Exactness for these polynomials forces the corresponding weights to be positive. The constant polynomial gives their sum. The symmetry and the equatorial assertion follow from the parity and simplicity of the zeros of $P'_{m+1}$.
\end{proof}

In the coordinates \eqref{eq:latitude-coordinates}, give the latitude $z_\nu$ total mass $b_\nu$ and an angular probability measure $\mu_\nu$, with
\[
 c_{\nu,K}=\int e^{iK\phi}\dd\mu_\nu(\phi).
\]
Place one node at each pole. Pair the positive and negative latitude measures by the antipodal map $(z,\phi)\mapsto(-z,\phi+\pi)$, and use an antipodal equatorial measure when $m$ is odd. All odd polynomials are then exact, and Lemma~\ref{lem:lobatto} handles the zero angular frequency.

Set $d=2m$. The positive frequency $K$ of an even polynomial of degree at most $d$ has latitude factors
\begin{equation}\label{eq:lat-factors}
 (1-z^2)^{K/2}z^j,\qquad 0\le j\le d-K,\quad j\equiv K\pmod2.
\end{equation}
Indeed, expand in $X+iY$, $X-iY$, and $Z$, and use $X^2+Y^2=1-Z^2$ on the sphere. The remaining exactness conditions are therefore
\begin{equation}\label{eq:latitude-moment-system}
 \sum_{\nu=1}^{m}b_\nu(1-z_\nu^2)^{K/2}z_\nu^j c_{\nu,K}=0,
 \qquad 1\le K\le d,\quad 0\le j\le d-K,\quad j\equiv K\pmod2.
\end{equation}
Negative frequencies give the complex conjugate equations. In particular, the angular frequency $K=d+1-h$, with $1\le h\le d$, imposes at most
\begin{equation}\label{eq:q-h}
 q(h)=\left\lceil\frac h2\right\rceil
\end{equation}
homogeneous complex linear equations on the latitude moments. For even $h$ these use $z,z^3,\ldots,z^{h-1}$; for odd $h$ they use $1,z^2,\ldots,z^{h-1}$. The fixed latitude weights and radial factors do not affect this count.

The following two elementary lemmas will be used to choose angular moments subject to the linear equations \eqref{eq:latitude-moment-system} and the positivity condition \eqref{eq:moment-block}.

\begin{lemma}\label{lem:disks}
Let $n\ge1$ and $q\ge0$ be integers, let $A\in\C^{q\times n}$, and let $r_1,\ldots,r_n>0$. Define
\[
 \mathcal D=\left\{z\in\C^n:Az=0,\quad |z_j|\le r_j\ \text{for }j=1,\ldots,n\right\}.
\]
Then $\operatorname{ext}(\mathcal D)\ne\varnothing$, and every $z\in\operatorname{ext}(\mathcal D)$ satisfies
\[
 \#\left\{j\in\{1,\ldots,n\}:|z_j|<r_j\right\}\le q.
\]
\end{lemma}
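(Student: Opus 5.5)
The set $\mathcal D$ is the intersection of the linear subspace $\ker A$ with the product of closed disks $\prod_j\{|z_j|\le r_j\}$. I would first dispose of existence. $\mathcal D$ is nonempty because $0\in\mathcal D$, and it is compact and convex because it is a closed subset of a bounded product of disks. Viewing $\C^n$ as $\R^{2n}$, the Krein--Milman theorem (or simply the fact that a nonempty compact convex set in finite dimension has an extreme point, e.g.\ a maximizer of a strictly convex function such as $\sum_j|z_j|^2$) gives $\operatorname{ext}(\mathcal D)\ne\varnothing$.

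For the counting statement I would use a perturbation argument. Fix $z\in\operatorname{ext}(\mathcal D)$ and let $S=\{j:|z_j|<r_j\}$ be its set of slack coordinates, and suppose for contradiction that $|S|>q$. Consider complex vectors $u\in\C^n$ supported on $S$ with $Au=0$. These form the kernel of the submatrix $A_S\in\C^{q\times|S|}$, viewed as a complex-linear map $\C^{|S|}\to\C^q$. Its complex dimension is at least $|S|-q\ge1$, so there is a nonzero $u$ with $\supp u\subseteq S$ and $Au=0$.

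Because $|z_j|<r_j$ strictly for every $j\in S$ and $S$ is finite, there is an $\eps>0$ with $|z_j\pm\eps u_j|\le r_j$ for all $j\in S$. The coordinates outside $S$ are unchanged, and $A(z\pm\eps u)=0$. Hence $z\pm\eps u\in\mathcal D$, and $z=\tfrac12\bigl((z+\eps u)+(z-\eps u)\bigr)$ with $z+\eps u\ne z-\eps u$. This contradicts extremality, so $|S|\le q$. When $q=0$ the argument reads as $A$ being empty, so any slack coordinate already allows a perturbation, and the conclusion becomes $|z_j|=r_j$ for every $j$.

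I expect no real obstacle. The only point needing care is the dimension count: the constraint $Az=0$ consists of $q$ complex equations, which is $2q$ real ones, while each slack coordinate contributes two real degrees of freedom. Counting over $\C$ consistently gives the bound $q$ rather than $2q$, and this matches the count $q(h)$ of complex equations in \eqref{eq:q-h} where the lemma is to be applied.
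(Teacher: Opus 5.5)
Your proposal is correct and follows essentially the same route as the paper: existence from nonempty compactness and convexity of $\mathcal D$ in $\R^{2n}$, then a nonzero complex kernel vector of $A$ supported on the slack set $S$ when $|S|>q$, whose perturbations $z\pm\eps u$ contradict extremality. Your strictly-convex-maximizer remark for existence and your count over $\C$ rather than $\R$ match the paper's argument and the way it uses the count $q(h)$.
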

\begin{proof}
The set $\mathcal D$ contains the origin and is compact and convex in $\C^n\simeq\R^{2n}$, so it has an extreme point. Fix $z\in\operatorname{ext}(\mathcal D)$ and put
\[
 I=\{j\in\{1,\ldots,n\}:|z_j|<r_j\}.
\]
If $|I|>q$, the columns of $A$ indexed by $I$ are linearly dependent over $\C$. Hence there exists $v\in\C^n\setminus\{0\}$ such that $Av=0$ and $v_j=0$ for $j\notin I$. For sufficiently small $\varepsilon>0$, both $z+\varepsilon v$ and $z-\varepsilon v$ belong to $\mathcal D$. They are distinct and have midpoint $z$, contradicting $z\in\operatorname{ext}(\mathcal D)$.
\end{proof}

For $D\ge2h$, write $E_{D,h}(B)$ for the $D\times D$ matrix obtained by inserting an $h\times h$ matrix $B$ in the lower-left corner and setting all other entries to zero.

\begin{lemma}\label{lem:corner}
For $|a|<1$ and $D\ge2h$, the matrix
\begin{equation}\label{eq:corner}
 C=aI_D+(1-|a|^2)E_{D,h}(B)
\end{equation}
is a contraction precisely when $B$ is a contraction. In that case
\begin{equation}\label{eq:corner-null}
 \dim\ker(I-C^*C)=\dim\ker(I-B^*B).
\end{equation}
If $|a|=1$, contraction forces the unscaled corner perturbation to be zero, and $C=aI_D$ has defect-kernel dimension $D$.
\end{lemma}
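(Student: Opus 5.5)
We must prove Lemma~\ref{lem:corner}: for $|a|<1$ and $D\ge 2h$, the matrix $C=aI_D+(1-|a|^2)E_{D,h}(B)$ is a contraction iff $B$ is, the defect kernels have equal dimension, and the degenerate case $|a|=1$ behaves as stated.

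The plan is to reduce everything to a $2h\times 2h$ block computation. Index the coordinates as a first block of size $h$, a middle block of size $D-2h$, and a last block of size $h$. Since $E_{D,h}(B)$ occupies the lower-left corner, it maps the first block into the last and annihilates the other coordinates. Hence, after a permutation of coordinates,
\[
C\cong \begin{pmatrix} aI_h & 0\\ sB & aI_h\end{pmatrix}\oplus aI_{D-2h},\qquad s=1-|a|^2 .
\]
When $|a|<1$, the middle summand is a strict contraction and contributes nothing to $\ker(I-C^*C)$. It therefore suffices to treat the $2\times2$ block matrix $M$ above.

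Compute $I-M^*M$ directly:
\[
I-M^*M=\begin{pmatrix} sI-s^2B^*B & -s\bar a B^*\\ -s aB & sI\end{pmatrix}.
\]
The lower-right block is $sI\succ0$, so the Schur complement determines both positivity and the kernel. The complement is
\[
sI-s^2B^*B-(s\bar a B^*)(sI)^{-1}(s aB)=sI-s^2B^*B-s|a|^2B^*B=s(I-B^*B),
\]
using $s+|a|^2=1$. Thus $I-M^*M\succeq0$ iff $I-B^*B\succeq0$, which is the contraction equivalence. Since Schur complementation with respect to an invertible block preserves the kernel dimension, $\dim\ker(I-M^*M)=\dim\ker(I-B^*B)$, giving \eqref{eq:corner-null}. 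The choice of scaling $1-|a|^2$ is exactly what makes the complement collapse to a multiple of $I-B^*B$, so this step is the heart of the argument. The only care needed is bookkeeping: checking that the corner sits where claimed under the permutation, and that $D\ge2h$ keeps the blocks disjoint.

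For $|a|=1$, consider an unscaled corner perturbation $C=aI+E_{D,h}(B')$. The lower-right entry of $I-C^*C$ becomes $0$, while the upper-left block is $-B'^*B'$. Positivity then forces $B'^*B'=0$, hence $B'=0$. (Equivalently, every $2\times2$ principal minor pairing a first-block and a last-block coordinate forces the off-diagonal entries to vanish.) We are left with $C=aI_D$, which is unitary, so $\ker(I-C^*C)=\C^D$ has dimension $D$.
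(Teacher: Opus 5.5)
Your proof is correct and follows the paper's argument: the same permutation to $\begin{pmatrix}aI_h&0\\ sB&aI_h\end{pmatrix}\oplus aI_{D-2h}$, the same Schur complement $s(I-B^*B)$ taken against the invertible block $sI_h$, and for $|a|=1$ the same observation (your $-B'^*B'\succeq0$ is the paper's ``each column already has norm one'') forcing the corner to vanish. One harmless slip: the off-diagonal blocks of $I-M^*M$ are $-saB^*$ and $-s\bar aB$, not $-s\bar aB^*$ and $-saB$, but only $|a|^2$ enters the complement, so nothing changes.
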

\begin{proof}
Put $s=1-|a|^2$. Reordering the first and last $h$ coordinates gives
\[
 C=\begin{pmatrix}aI_h&0\\sB&aI_h\end{pmatrix}\oplus aI_{D-2h}.
\]
The first block of $I-C^*C$ has lower-right block $sI_h$. Its Schur complement is
\[
 sI_h-s^2B^*B-|a|^2sB^*B=s(I_h-B^*B).
\]
The remaining block is strictly positive. This proves both assertions for $|a|<1$. At $|a|=1$, each column already has norm one from its diagonal entry, so any additional corner entry is forbidden.
\end{proof}

The condition $D\ge2h$ ensures that the two coordinate blocks do not overlap; no divisibility relation between $D$ and $h$ is required. If $B$ is lower-triangular Toeplitz, its corner embedding is also Toeplitz, since the inserted diagonals are precisely the terminal diagonals of the larger matrix.

Extreme points of the full Toeplitz contraction set need not have small defect rank. If $S$ is the $p\times p$ nilpotent unilateral shift and $1\le q<p$, then $S^q$ is an extreme lower-triangular Toeplitz contraction, although $\rank(I-(S^q)^*S^q)=q$. Indeed, if $S^q\pm H$ are contractions, apply both to $e_0$ and add the squared norm inequalities. Since $\norm{S^qe_0}=1$, they force $He_0=0$, and a triangular Toeplitz matrix is determined by that column. Thus $H=0$. The construction in Section~\ref{sec:cascade} therefore uses the corner-block identity of Lemma~\ref{lem:corner}, rather than extremality in the full Toeplitz contraction set.

\subsection{LP--Tur\'an inequalities and spherical packing}\label{subsec:lp-turan-preliminaries}
For real $f\in L^2([-1,1],du/2)$, the function $x\mapsto f(v\cdot x)$ is zonal about $v\in\Sph$. We use the Legendre normalization
\begin{equation}\label{eq:legendre-coefficients}
 f_\ell=\frac{2\ell+1}{2}\int_{-1}^1f(u)P_\ell(u)\dd u,
 \qquad \frac12\int_{-1}^1|f(u)|^2\dd u
 =\sum_{\ell\ge0}\frac{|f_\ell|^2}{2\ell+1}.
\end{equation}
The spherical Laplacian has the sign convention
\[
 \Delta_{\Sph}P_\ell(v\cdot x)=-\lambda_\ell P_\ell(v\cdot x),
 \qquad \lambda_\ell=\ell(\ell+1).
\]
On zonal profiles it is the Legendre operator $\mathcal Lf=((1-u^2)f')'$. The addition formula \eqref{eq:addition-formula} gives the convolution identity
\begin{equation}\label{eq:zonal-convolution}
 \begin{split}
 (f*g)(v\cdot w)
 &:=\int_{\Sph}f(v\cdot x)g(w\cdot x)\dd\sigma(x)\\
 &=\sum_{\ell\ge0}\frac{f_\ell g_\ell}{2\ell+1}P_\ell(v\cdot w).
 \end{split}
\end{equation}
For $f,g\in L^2([-1,1],du/2)$, Cauchy--Schwarz makes the coefficient series absolutely summable. Since $|P_\ell(u)|\le1$ on $[-1,1]$, the expansion is absolutely and uniformly convergent. Nonnegative profiles give a nonnegative convolution.

Here \emph{LP} abbreviates \emph{linear programming}. In the spherical LP method~\cite{DGS,Yudin}, an auxiliary kernel is chosen by imposing sign conditions on its Legendre coefficients and pointwise values. Weighted extensions on projective spaces are treated in~\cite{Lyubich}. After the normalization $G_0=1$, maximizing $G(1)$ under the conditions below is a linear optimization in those coefficients. The weighted estimate follows directly from the moment identities in Subsection~\ref{subsec:spherical-cubature}.

\begin{lemma}\label{lem:weighted-lp}
Suppose $G$ is a nonnegative continuous function on $[-1,1]$ with an absolutely uniformly convergent Legendre expansion $G=\sum_{\ell\ge0}G_\ell P_\ell$. If $G(1)>0$, $G_0>0$, and $G_\ell\le0$ for $\ell>t$, then every positive degree-$t$ formula satisfies
\begin{equation}\label{eq:weighted-lp}
 G(1)\sum_iw_i^2\le G_0,\qquad N\ge\frac{G(1)}{G_0}.
\end{equation}
\end{lemma}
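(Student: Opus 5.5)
The plan is the standard LP argument, applied to the double sum over pairs of nodes. Let $\nu=\sum_i w_i\delta_{x_i}$ be a positive degree-$t$ formula and form
\[
 S=\sum_{i,j}w_iw_jG(x_i\cdot x_j).
\]
\textbf{Lower bound for $S$.} Every term is nonnegative because $G\ge0$ on $[-1,1]$. Keeping only the diagonal terms $i=j$, where $x_i\cdot x_i=1$, gives
\[
 S\ge G(1)\sum_iw_i^2 .
\]

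\textbf{Upper bound for $S$.} Next expand $G$ in Legendre polynomials. Since the expansion converges absolutely and uniformly on $[-1,1]$ and the sum over $(i,j)$ is finite, the two summations can be interchanged:
\[
 S=\sum_{\ell\ge0}G_\ell A_\ell,
\]
with $A_\ell$ as in \eqref{eq:zonal-moment-positivity}. The moment conditions \eqref{eq:harmonic-moments} give $A_0=(\sum_iw_i)^2=1$ and $A_\ell=0$ for $1\le\ell\le t$. For $\ell>t$ we have $A_\ell\ge0$ by \eqref{eq:zonal-moment-positivity} and $G_\ell\le0$ by hypothesis. Hence every term with $\ell\ge1$ is nonpositive, and $S\le G_0$.

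\textbf{Conclusion.} Combining the two bounds yields $G(1)\sum_iw_i^2\le G_0$. By Cauchy--Schwarz,
\[
 1=\Bigl(\sum_iw_i\Bigr)^2\le N\sum_iw_i^2 ,
\]
so $\sum_iw_i^2\ge 1/N$. Since $G(1)>0$ and $G_0>0$, it follows that $G(1)/N\le G_0$, that is, $N\ge G(1)/G_0$.

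The only point that needs care is the interchange of the Legendre sum with the finite double sum. It is justified by the uniform convergence of the expansion, since $|P_\ell|\le1$ and $\sum_\ell|G_\ell|<\infty$. I do not expect any genuine obstacle.
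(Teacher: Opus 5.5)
Your proposal is correct and follows the paper's proof essentially step for step. You bound the double sum $\sum_{i,j}w_iw_jG(x_i\cdot x_j)$ below by its diagonal and above by $G_0$ via the Legendre expansion with $A_0=1$, $A_\ell=0$ for $1\le\ell\le t$, and $G_\ell A_\ell\le0$ for $\ell>t$, then conclude with $\sum_iw_i^2\ge1/N$.
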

\begin{proof}
The quantities $A_\ell$ in \eqref{eq:zonal-moment-positivity} are nonnegative, with $A_0=1$ and $A_\ell=0$ for $1\le\ell\le t$. Therefore
\begin{equation}\label{eq:lp-energy}
 G(1)\sum_iw_i^2\le\sum_{i,j}w_iw_jG(x_i\cdot x_j)
 =G_0+\sum_{\ell>t}G_\ell A_\ell\le G_0.
\end{equation}
Use $\sum_iw_i^2\ge1/N$ for the second conclusion.
\end{proof}

For a continuous zonal function $q$ and a finite zonal measure $H$, the convolution $q*H$ is defined in the same way, integrating $q(v\cdot x)$ against the rotated measure with pole $w$. If $q\in H^1(\Sph)\cap C(\Sph)$ is real and zonal and, for some $k>0$,
\[
 H=q+k^{-2}\Delta_{\Sph}q
\]
is a finite measure, understood relative to $\sigma$, then
\begin{equation}\label{eq:companion-spectrum}
 (q*H)_\ell=\left(1-\frac{\lambda_\ell}{k^2}\right)
 \frac{|q_\ell|^2}{2\ell+1},
 \qquad
 \sum_{\ell\ge0}|(q*H)_\ell|
 \le\norm q_2^2+k^{-2}\norm{\nabla q}_2^2.
\end{equation}
These identities follow first for finite Legendre sums and then by the $H^1$ expansion. They give absolute uniform convergence even for a measure-valued $H$. If $q,H\ge0$, the convolution is nonnegative. For $k^2=n(n+1)$, its coefficients are nonpositive for $\ell\ge n$, as required for degree $n-1$. We say that a profile $G$ has angular support in $[0,\Theta]$ when $G(\cos\theta)=0$ for $\Theta<\theta\le\pi$.

Tur\'an's theorem bounds the number of edges of a finite simple graph in terms of its clique number~\cite{Turan}. The weighted version in~\cite{MS} states that, for a graph $\Lambda$ with clique number $\omega(\Lambda)$,
\begin{equation}\label{eq:motzkin-straus}
 \max_{\substack{w_i\ge0\\\sum_iw_i=1}}
 \sum_{\{i,j\}\in E(\Lambda)}w_iw_j
 =\frac12\left(1-\frac1{\omega(\Lambda)}\right).
\end{equation}
We use the complementary form, in which the independence number $\alpha(\Gamma)$ is the largest size of a vertex set containing no edge.

\begin{lemma}\label{lem:graph}
Let $\Gamma$ be a finite simple graph on $\{1,\ldots,N\}$ with independence number $\alpha(\Gamma)\le A$, where $A\ge1$. For every $w_1,\ldots,w_N\ge0$ with $\sum_iw_i=1$, one has
\begin{equation}\label{eq:weighted-graph}
 \sum_iw_i^2+2\sum_{\{i,j\}\in E(\Gamma)}w_iw_j\ge\frac1A.
\end{equation}
\end{lemma}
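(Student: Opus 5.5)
The plan is to read off the inequality from the Motzkin--Straus identity \eqref{eq:motzkin-straus}, applied to the complement graph $\bar\Gamma$. A clique in $\bar\Gamma$ is exactly an independent set in $\Gamma$, so
\[
\omega(\bar\Gamma)=\alpha(\Gamma)\le A.
\]
The first step is an algebraic rewriting of the left-hand side of \eqref{eq:weighted-graph}. Since $\sum_iw_i=1$,
\[
1=\Bigl(\sum_iw_i\Bigr)^2=\sum_iw_i^2+2\sum_{\{i,j\}\in E(\Gamma)}w_iw_j+2\sum_{\{i,j\}\in E(\bar\Gamma)}w_iw_j ,
\]
because every unordered pair of distinct vertices is an edge of exactly one of $\Gamma$ and $\bar\Gamma$. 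The left-hand side of \eqref{eq:weighted-graph} therefore equals
\[
1-2\sum_{\{i,j\}\in E(\bar\Gamma)}w_iw_j .
\]

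The second step applies \eqref{eq:motzkin-straus} to $\bar\Gamma$. For every probability vector $w$ it gives
\[
2\sum_{E(\bar\Gamma)}w_iw_j\le 1-\frac1{\omega(\bar\Gamma)} .
\]
Substituting into the rewriting above, the left-hand side is at least $1/\omega(\bar\Gamma)=1/\alpha(\Gamma)\ge 1/A$. The case where $\bar\Gamma$ has no edges is covered: then the clique number is $1$ and the bound reads $\sum_iw_i^2+2\sum_{E(\Gamma)}w_iw_j=1\ge 1/A$, using $A\ge1$.

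I expect no real obstacle, since the argument only uses the cited weighted Tur\'an theorem as stated. The one point to check is the convention that Motzkin--Straus counts each unordered edge once, which matches the factor $2$ in \eqref{eq:weighted-graph}.

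If a self-contained argument is preferred, I would instead choose a maximal independent set $I$ and dominate each vertex by a neighbour in $I$. That route is messier, so I would rely on \eqref{eq:motzkin-straus}.
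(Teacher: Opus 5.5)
Your proof is correct and is essentially the paper's argument: pass to the complement $\bar\Gamma$, whose clique number is $\alpha(\Gamma)\le A$, bound its weighted edge sum by $\tfrac12(1-1/\alpha(\Gamma))$, and subtract this from the total pair sum $\tfrac12(1-\sum_iw_i^2)$.

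The only difference is that the paper proves the needed half of Motzkin--Straus inline rather than citing \eqref{eq:motzkin-straus}: a maximizer of minimal support must be supported on a clique of $\bar\Gamma$, since otherwise mass can be transferred between two nonadjacent support vertices. Your fallback domination sketch would not work as stated, because two vertices dominated by the same element of $I$ need not be adjacent, so citing or reproving Motzkin--Straus is the right route.
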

\begin{proof}
Maximize the weighted edge sum of the complement over all probability vectors, choosing a maximizer of minimal positive support. If two vertices in its support are nonadjacent in the complement, their combined mass can be transferred to the one with larger linear payoff without decreasing the sum. This contradicts minimality of the support. The support is therefore a clique in the complement of size at most $A$. On such a clique the edge sum is $\frac12(1-\sum_iw_i^2)\le\frac12(1-1/A)$. Subtract from the total pair sum to obtain \eqref{eq:weighted-graph}. This is the weighted Motzkin--Straus argument~\cite{MS}.
\end{proof}

For $0<\theta\le1/2$, let $A_2^>(\theta)$ be the maximum size of a spherical code with all angular distances strictly larger than $\theta$. We use the classical spherical simplex packing bound~\cite{Boroczky,BoroczkyStability} in the form
\begin{equation}\label{eq:packing}
 A_2^>(\theta)\le F(\theta):=
 \frac{6\alpha(\theta)}{3\alpha(\theta)-\pi},\qquad
 \alpha(\theta)=\arccos\frac{\cos\theta}{1+\cos\theta}.
\end{equation}
To obtain \eqref{eq:packing} from the simplex density bound, note that a cap of radius $\theta/2$ has area $2\pi(1-\cos(\theta/2))$, the equilateral spherical triangle of side $\theta$ has area $3\alpha-\pi$, and the three cap sectors in that triangle have total area $3\alpha(1-\cos(\theta/2))$. Substituting the simplex density and canceling the cap factor gives \eqref{eq:packing}. Only this upper bound for $A_2^>$ is used below.

Write $f=1/F$ for $\theta>0$ and set $f(0)=0$. Then $f$ is continuously differentiable and increasing on $[0,1/2]$. Differentiation gives
\begin{equation}\label{eq:packing-derivative}
 \frac{2f'(\theta)}{\sin\theta}
 =\frac{\pi}{3\alpha(\theta)^2(1+\cos\theta)\sqrt{1+2\cos\theta}}
 \le\kappa_0:=\frac{16}{5\pi\sqrt{11}}<1
\end{equation}
for $0\le\theta\le1/2$, using $\alpha\ge\pi/3$ and $\cos\theta\ge7/8$.

The same explicit function has the small-angle estimate
\begin{equation}\label{eq:packing-error}
 \left|f(\theta)-\frac{\sqrt3}{8\pi}\theta^2\right|\le\frac{\theta^4}{24}
 \qquad(0\le\theta\le1/2).
\end{equation}
To prove \eqref{eq:packing-error}, put $\delta=\alpha-\pi/3$ and $g(c)=((1+c)\sqrt{1+2c})^{-1}$. On $7/8\le c\le1$,
\[
 0<g(c)\le\frac{16}{15\sqrt{11}}<\frac13,\qquad
 |g'(c)|\le\frac13\left(\frac8{15}+\frac4{11}\right)<\frac13.
\]
Since $\delta'=\sin\theta\,g(\cos\theta)$, these estimates give
\[
 \left|\delta'-\frac{\theta}{2\sqrt3}\right|\le\frac{2\theta^3}{9},\quad
 \left|\delta-\frac{\theta^2}{4\sqrt3}\right|\le\frac{\theta^4}{18},\quad
 0\le\delta\le\frac{\theta^2}{6}.
\]
Substitute $f=\delta/[2(\pi/3+\delta)]$ to bound its remainder by
$\big(1/(12\pi)+1/(8\pi^2)\big)\theta^4<\theta^4/24$, proving \eqref{eq:packing-error}. 
For nodes $x_1,\ldots,x_N\in\Sph$, put
\[
 \theta_{ij}=\operatorname{dist}(x_i,x_j)=\arccos(x_i\cdot x_j),
 \qquad E(\Gamma_\theta)=\{\{i,j\}:i<j,\ \theta_{ij}\le\theta\}.
\]
An independent set in $\Gamma_\theta$ has all pairwise distances strictly larger than $\theta$. Thus $\alpha(\Gamma_\theta)\le A_2^>(\theta)\le F(\theta)$, and Lemma~\ref{lem:graph} gives, for $\nu=\sum_iw_i\delta_{x_i}$,
\begin{equation}\label{eq:distance-mass}
 M_\nu(\theta):=\sum_{i,j:\,\theta_{ij}\le\theta}w_iw_j
 \ge f(\theta),\qquad 0<\theta\le\frac12.
\end{equation}
The diagonal terms are included in $M_\nu$.

Let $0<\Theta\le1/2$, and let $\ell$ be continuous, nonnegative and nonincreasing on $[0,\Theta]$, with $\ell(\Theta)=0$, extended by zero beyond $\Theta$. The positive Stieltjes measure $-d\ell$ gives
\begin{equation}\label{eq:threshold-integral}
 \begin{split}
 \sum_{i,j}w_iw_j\ell(\theta_{ij})
 &=\int_0^\Theta M_\nu(\theta)(-d\ell(\theta))\\
 &\ge\int_0^\Theta f(\theta)(-d\ell(\theta))
 =\int_0^\Theta f'(\theta)\ell(\theta)\dd\theta.
 \end{split}
\end{equation}
Indeed, write $\ell(r)=\int_r^\Theta(-d\ell)$, interchange the finite sum and integral, and use \eqref{eq:distance-mass}; the final equality is integration by parts, with $f(0)=\ell(\Theta)=0$.

We use the term \emph{LP--Tur\'an} for combining the spectral upper estimate \eqref{eq:lp-energy} with this graph-theoretic lower estimate. The Tur\'an component is applied to distance graphs, while spherical packing bounds their independent sets. Section~\ref{sec:lp-turan} makes this combination explicit for lower envelopes of a nonnegative kernel.

\subsection{Radial kernels and Helmholtz companions}\label{subsec:radial-preliminaries}
Write $D_R=\{x\in\R^2:|x|<R\}$. For a radial function we use the same letter for its profile in $r=|x|$. For smooth profiles away from the origin or pole, the Laplacian convention above gives
\begin{equation}\label{eq:radial-laplacians}
 \Delta_{\R^2}p=p''+\frac1r p',
 \qquad
 \Delta_{\Sph}q=q''+\cot\theta\,q'.
\end{equation}
Here $\theta$ is geodesic distance from the pole. Under $r=k\theta$, the second operator, divided by $k^2$, becomes $\partial_r^2+k^{-1}\cot(r/k)\partial_r$.

For a compactly supported $p\in H^1(\R^2)$, its \emph{Helmholtz companion} is the distribution $H=(1+\Delta)p$. The requirement that $H$ be a nonnegative finite measure means that
\begin{equation}\label{eq:companion-weak}
 \int_{\R^2}\eta\dd H
 =\int_{\R^2}p\eta\dd x-\int_{\R^2}\nabla p\cdot\nabla\eta\dd x
 \qquad(\eta\in C_c^\infty(\R^2)),
\end{equation}
with a nonnegative left-hand side whenever $\eta\ge0$. Thus derivatives of a zero-extended cap are taken distributionally, including any interface measures. Testing with a function equal to one near the support gives
\[
 H(\R^2)=\int_{\R^2}p\dd x=:P.
\]

We use the Fourier transform
\begin{equation}\label{eq:planar-fourier-convention}
 \widehat p(\xi)=\int_{\R^2}e^{-ix\cdot\xi}p(x)\dd x,
 \qquad
 \norm p_2^2=\frac1{(2\pi)^2}\int_{\R^2}|\widehat p(\xi)|^2\dd\xi.
\end{equation}
Suppose that $p$ is real, radial and nonnegative, with support in $\overline D_R$, and that its companion $H$ is a nonnegative finite measure. The planar convolution $K=p*H$ then satisfies
\begin{equation}\label{eq:planar-companion-identities}
 \begin{gathered}
 \widehat K(\xi)=(1-|\xi|^2)|\widehat p(\xi)|^2,\qquad
 K(0)=\norm p_2^2-\norm{\nabla p}_2^2,\\
 \int_{\R^2}K\dd x=P^2,\qquad \supp K\subset\overline D_{2R}.
 \end{gathered}
\end{equation}
The Fourier identity uses that the transform of a real radial function is real. Its right-hand side is integrable because $p\in H^1$, so Fourier inversion gives a continuous representative of $K$. It agrees distributionally with the nonnegative convolution $p*H$, and is therefore nonnegative everywhere. The diagonal identity follows from Plancherel; the mass and support statements follow from convolution.

For clarity, the Bessel functions used below have normalization
\begin{equation}\label{eq:bessel-normalization}
 J_\nu(r)=\left(\frac r2\right)^\nu
 \sum_{j=0}^\infty\frac{(-r^2/4)^j}{j!(j+\nu)!},\qquad \nu=0,1.
\end{equation}
They satisfy $J_0'=-J_1$ and $(rJ_1)'=rJ_0$; hence $J_0''+r^{-1}J_0'+J_0=0$ for $r>0$~\cite{DLMF}. As in the introduction, $R_0=j_{1,1}$ denotes the first positive zero of $J_1$. The integral representation
\[
 J_0(r)=\frac1\pi\int_0^\pi\cos(r\cos\phi)\dd\phi
\]
gives the radial Fourier inversion formula
\begin{equation}\label{eq:radial-Fourier}
 K(s)=\frac1{2\pi}\int_0^\infty
 (1-t^2)|\widehat p(t)|^2J_0(st)t\dd t.
\end{equation}
Here $\widehat p(t)$ denotes the radial Fourier profile. We will also use
\begin{equation}\label{eq:bessel-basic-bounds}
 |J_0(r)|\le1,\qquad 0\le1-J_0(r)\le\frac{r^2}{4},\qquad
 |J_0(r)|\le Cr^{-1/2}\quad(r\ge1).
\end{equation}
The first two inequalities follow directly from the integral representation and $1-\cos u\le u^2/2$. For the last, remove endpoint intervals of length $r^{-1/2}$ and integrate by parts on the remainder. The integral representation is recorded in~\cite{DLMF}.

Finally, let $G_t(x)=(4\pi t)^{-1}e^{-|x|^2/(4t)}$ be the planar heat kernel. The relation $H=(1+\Delta)p$ gives, in distributions,
\begin{equation}\label{eq:heat-companion}
 e^tG_t*p-p=\int_0^t e^sG_s*H\dd s.
\end{equation}
This follows by differentiating $e^sG_s*p$ and integrating in $s$. For $p,H\ge0$, it yields the useful majorant $0\le p\le e^tG_t*p\le e^tP/(4\pi t)$ almost everywhere. These conventions and identities will be used for the normalized cap class in Section~\ref{sec:caps}.

\section{Hemisphere rank and the uniform one-node gap}\label{sec:rank}
The following lemma recovers, in the present spherical setting, M\"oller's odd-degree node bound~\cite{Moller}. Its hemisphere proof applies odd exactness to $(u\cdot x)p(x)^2$, with $p\in H_m$, and the reproducing kernel \eqref{eq:hom-kernel} gives the accompanying individual-weight estimate.

\begin{lemma}\label{lem:hemisphere}
Let $m\ge0$ be an integer, and let $X=\{x_1,\ldots,x_N\}\subset\Sph$ and $w_1,\ldots,w_N>0$ define a cubature formula exact through degree $2m+1$ with respect to the normalized surface measure $\sigma$. For every $u\in\Sph$ satisfying
\[
 u\cdot x_i\ne0,\qquad i=1,\ldots,N,
\]
the open hemispheres
\[
 H_+(u)=\{x\in\Sph:u\cdot x>0\},
 \qquad
 H_-(u)=\{x\in\Sph:u\cdot x<0\}
\]
satisfy
\[
 \#\bigl(X\cap H_+(u)\bigr)\ge h_m,
 \qquad
 \#\bigl(X\cap H_-(u)\bigr)\ge h_m,
 \qquad h_m=\frac{(m+1)(m+2)}2.
\]
Moreover,
\[
 w_i\le\frac1{F_m}=\frac1{(m+1)(m+2)},
 \qquad i=1,\ldots,N.
\]
\end{lemma}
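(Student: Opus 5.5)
The hemisphere count comes from testing exactness on $(u\cdot x)p(x)^2$. This polynomial has degree $2m+1$ and is odd under $x\mapsto -x$, since $p\in H_m$ gives $p(-x)^2=p(x)^2$. Hence its integral against $\sigma$ vanishes, and exactness gives
\[
 \sum_{i=1}^N w_i\,(u\cdot x_i)\,p(x_i)^2=0 .
\]
Suppose, for contradiction, that $\#(X\cap H_+(u))<h_m$. The evaluation map $H_m\to\R^{X\cap H_+(u)}$ then has a nontrivial kernel, so there is a nonzero $p\in H_m$ vanishing on every node of $H_+(u)$. In the sum above, every surviving term has $u\cdot x_i<0$, because no node lies on the equator. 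Each such term is therefore $\le0$, and since the sum is zero, each vanishes. So $p$ vanishes on all of $X$.

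Now use exactness on $p^2$, which has degree $2m\le 2m+1$:
\[
 0=\sum_i w_ip(x_i)^2=\int_{\Sph}p^2\dd\sigma .
\]
The right-hand side is positive by injectivity of restriction of $H_m$ to the sphere, a contradiction. The bound for $H_-(u)$ follows by replacing $u$ with $-u$.

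For the weight bound, fix $i$ and set $p(x)=\Kcal_m(x_i,x)\in H_m$. Exactness on $p^2$ and \eqref{eq:kernel-reproduction} give
\[
 h_m=\int_{\Sph}\Kcal_m(x_i,x)^2\dd\sigma(x)=\sum_j w_j\Kcal_m(x_i,x_j)^2\ge w_i\,\Kcal_m(x_i,x_i)^2=w_ih_m^2 .
\]
This alone yields only $w_i\le 1/h_m$, so the factor two has to come from the antipode. Because $\Kcal_m(x_i,-x_i)^2=h_m^2$ by parity, I instead use the odd-degree exactness with the hemisphere test. Choose a generic $u$ with $u\cdot x_i>0$ small. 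I then combine the two one-sided identities
\[
 \sum_j w_j(u\cdot x_j)^{\pm}p(x_j)^2=\tfrac12\sum_j w_j|u\cdot x_j|\,p(x_j)^2 ,
\]
which hold because the signed sum is zero.

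A cleaner route is to apply exactness to $(1+x_i\cdot x)\,\Kcal_m(x_i,x)^2/2$, of degree $2m+1$. Its integral is $h_m/2$, since the odd part integrates to zero. Its nodal sum is at least $w_i h_m^2$, because the factor $(1+x_i\cdot x)/2$ equals $1$ at $x=x_i$ and is nonnegative elsewhere. Hence
\[
 w_ih_m^2\le \frac{h_m}{2},\qquad\text{so}\qquad w_i\le\frac1{2h_m}=\frac1{F_m}.
\]

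The main obstacle is getting the factor $2$ in the weight bound. The plain reproducing-kernel estimate loses it, and inserting the linear factor $(1+x_i\cdot x)/2$, which is allowed precisely because exactness reaches degree $2m+1$, is the step to secure. Everything else is routine linear algebra.
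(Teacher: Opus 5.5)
Your proof is correct and matches the paper's argument. The hemisphere count uses the odd test $(u\cdot x)p(x)^2$ with a nonzero $p\in H_m$ vanishing on the nodes of one open hemisphere, and the weight bound uses the nonnegative degree-$(2m+1)$ test $(1+x_i\cdot x)\Kcal_m(x_i,x)^2$; the paper simply omits your factor $\tfrac12$. The paragraph with the generic $u$ and the one-sided identities is never finished and should be deleted, since your ``cleaner route'' already gives $w_i\le 1/F_m$ on its own.
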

\begin{proof}
Fix $u\in\Sph$ satisfying $u\cdot x_i\ne0$ for all $i$. For $p\in H_m$, the polynomial $(u\cdot x)p(x)^2$ is odd, so
\begin{equation}\label{eq:hemisphere}
 \sum_{u\cdot x_i>0}w_i(u\cdot x_i)p(x_i)^2
 =\sum_{u\cdot x_i<0}w_i(-u\cdot x_i)p(x_i)^2.
\end{equation}
Both quadratic forms on $H_m$ are positive definite. If one vanishes at $p$, positivity forces $p$ to vanish at all nodes in that hemisphere; the equality forces vanishing in the other hemisphere. Exactness for $p^2$ then gives $\int p^2\dd\sigma=0$. Each positive-definite form is a sum of rank-one evaluations, and therefore contains at least $h_m$ terms.

For the weight bound, fix a node $y$. The polynomial
\[
 (1+x\cdot y)\Kcal_m(y,x)^2
\]
is nonnegative, has degree at most $2m+1$, has integral $h_m$, and takes the value $2h_m^2$ at $y$. The integral assertion follows from reproduction and parity of the squared kernel. Exactness gives $2h_m^2w_y\le h_m$.
\end{proof}

\begin{lemma}\label{lem:interpolate}
Let $d\ge0$ be an integer, and let $A=\{[a_1],\ldots,[a_N]\}\subset\RP$ be a finite set with fixed representatives $a_i\in\R^3\setminus\{0\}$. Write $H_d=\R[X,Y,Z]_d$ for the space of homogeneous polynomials of degree $d$. If the evaluation map
\[
 \begin{aligned}
 \operatorname{ev}_{A,d}:H_d&\longrightarrow\R^N,\\
 p&\longmapsto\bigl(p(a_1),\ldots,p(a_N)\bigr)
 \end{aligned}
\]
is surjective, then, for every $z=[b]\in\RP\setminus A$ with $b\in\R^3\setminus\{0\}$, the evaluation map
\[
 \begin{aligned}
 \operatorname{ev}_{A\cup\{z\},d+1}:H_{d+1}&\longrightarrow\R^{N+1},\\
 p&\longmapsto\bigl(p(a_1),\ldots,p(a_N),p(b)\bigr)
 \end{aligned}
\]
is surjective. Surjectivity of either map is independent of the choice of representatives.
\end{lemma}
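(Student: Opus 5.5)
Since $\operatorname{ev}_{A,d}$ is surjective, we can interpolate at the old points in degree $d$. The plan is to multiply such interpolants by a linear form $\ell$ with $\ell(b)=0$ and $\ell(a_i)\ne0$ for all $i$. The new point $z$ is then handled by one extra polynomial of degree $d+1$ that is nonzero at $b$.

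First, independence of representatives. Replacing $a_i$ by $\lambda_i a_i$ with $\lambda_i\ne0$ multiplies the $i$-th coordinate of $\operatorname{ev}$ by $\lambda_i^d$, or by $\lambda_i^{d+1}$ in degree $d+1$. This is an invertible diagonal change of the target $\R^N$ or $\R^{N+1}$, so surjectivity is unaffected.

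Next, choose $\ell$. The linear forms vanishing at $b$ form a $2$-dimensional space $b^\perp$. For each $i$, the forms in $b^\perp$ that also vanish at $a_i$ form a proper subspace. Indeed, $[a_i]\ne[b]$ means $a_i$ and $b$ are linearly independent, so that intersection is $1$-dimensional. A real vector space is not a finite union of proper subspaces, so some $\ell\in b^\perp$ has $\ell(a_i)\ne0$ for all $i$. Similarly, pick a linear form $m$ with $m(b)\ne0$, and set $q_0=m^{d+1}\in H_{d+1}$; then $q_0(b)\ne0$.

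Now interpolate. Given a target $(y_1,\ldots,y_N,y_{N+1})$, put
\[
 s=\frac{y_{N+1}}{q_0(b)}.
\]
By surjectivity of $\operatorname{ev}_{A,d}$, choose $r\in H_d$ with
\[
 r(a_i)=\frac{y_i-s\,q_0(a_i)}{\ell(a_i)}\qquad(1\le i\le N).
\]
Then $p=\ell r+s q_0\in H_{d+1}$. At the old points, $p(a_i)=y_i$. At the new point, $\ell(b)=0$ gives $p(b)=s\,q_0(b)=y_{N+1}$.

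There is no real obstacle. The only point needing care is the existence of $\ell$, which uses only the distinctness of $[b]$ from each $[a_i]$ in $\RP$ and the avoidance of finitely many lines in a plane.
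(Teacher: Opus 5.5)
Your proof is correct. It takes a genuinely different route from the paper's: yours is constructive, the paper's argues by duality.

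You build the interpolant directly. A single linear form $\ell\in b^\perp$ with $\ell(a_i)\ne0$ for all $i$ exists because you can avoid the finitely many lines $b^\perp\cap a_i^\perp$ in the plane $b^\perp$. With it, $p=\ell r+s\,m^{d+1}$ decouples the new value at $b$ from the old values in one step. As a result, you never need to prove separately that $\operatorname{ev}_{A,d+1}$ is surjective.

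The paper proceeds in two stages.
\begin{itemize}
\item It first obtains surjectivity of $\operatorname{ev}_{A,d+1}$ by multiplying by a form that is nonzero at every $a_i$.
\item It then supposes that, on $H_{d+1}$, evaluation at $b$ equals $\sum_j\lambda_j\operatorname{ev}_{a_j}$. Testing this on $\ell p_i$, with $p_i$ a Lagrange basis of $H_d$ and $\ell$ ranging over all of $H_1$, gives the vector identity $p_i(b)\,b=\lambda_i a_i$.
\item Linear independence of $a_i$ and $b$ forces $\lambda_i=0$. This would make every element of $H_{d+1}$ vanish at $b$, which is absurd.
\end{itemize}

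Both arguments use the hypothesis $[b]\ne[a_i]$ only through linear independence of $a_i$ and $b$. Both also need one genericity choice of a linear form: the paper's form is nonzero at all $a_i$, and yours additionally vanishes at $b$. The trade-off is this. Your version places the independence in the choice of $\ell$ and yields an explicit interpolation formula. The paper's version tests against all of $H_1$ at once and does not exhibit the interpolant. Your treatment of independence from the choice of representatives is the same as the paper's.
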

\begin{proof}
Multiplication by a linear form nonzero at every $a_i$ shows that $\operatorname{ev}_{A,d+1}$ is surjective. If the extended map were not surjective, evaluation at $b$ on $H_{d+1}$ would be a linear combination of the evaluations at $a_1,\ldots,a_N$:
\[
 r(b)=\sum_{j=1}^N\lambda_j r(a_j)
 \qquad(r\in H_{d+1}).
\]
For each $i$, choose $p_i\in H_d$ with $p_i(a_j)=\delta_{ij}$ and apply this identity to $r=\ell p_i$, where $\ell\in H_1$. This gives
\[
 \ell(b)p_i(b)=\lambda_i\ell(a_i)
 \qquad(\ell\in H_1),
\]
and hence $p_i(b)b=\lambda_i a_i$. Since $[b]\ne[a_i]$, we obtain $\lambda_i=0$ for every $i$. This would make every polynomial in $H_{d+1}$ vanish at $b$, a contradiction. Finally, rescaling the representatives multiplies the evaluation maps by invertible diagonal matrices on their codomains, so surjectivity does not depend on these choices.
\end{proof}

\begin{lemma}\label{lem:mixed-zero}
Let $m\ge1$ be an integer, and define
\[
 \mathcal B_m(p,q)=\int_{\Sph}Zpq\dd\sigma,
 \qquad p\in H_{m-1},\quad q\in H_m,
\]
and
\[
 W_m=\{q\in H_m:\mathcal B_m(p,q)=0\ \text{for every }p\in H_{m-1}\}.
\]
The pairing $\mathcal B_m$ has rank $h_{m-1}$, and $\dim W_m=m+1$. Moreover,
\[
 \begin{aligned}
 \mathcal Z_{\R}(W_m)
 &:=\{[a:b:c]\in\RP:q(a,b,c)=0\ \text{for every }q\in W_m\}\\
 &=\begin{cases}
 \varnothing, & m\ \text{even},\\[2mm]
 \{[0:0:1]\}, & m\ \text{odd}.
 \end{cases}
 \end{aligned}
\]
\end{lemma}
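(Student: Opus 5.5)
The plan is to convert the pairing into an $L^2(\Sph)$ inner product on $H_m$, and then to identify $W_m$ explicitly through harmonic polynomials.

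\emph{Rank and dimension.} For $p\in H_{m-1}$ the product $Zp$ lies in $H_m$, and $\mathcal B_m(p,q)=\langle Zp,q\rangle$. Multiplication by $Z$ is injective on polynomials. Restriction to $\Sph$ is injective on $H_m$, as noted in Subsection~\ref{subsec:spherical-cubature}. Hence $Z\cdot H_{m-1}\subset H_m$ has dimension $h_{m-1}$. Since $\langle\cdot,\cdot\rangle$ is positive definite on $H_m$, the rank of $\mathcal B_m$ equals $\dim Z\cdot H_{m-1}=h_{m-1}$. Moreover $W_m=(Z\cdot H_{m-1})^\perp\cap H_m$, so
\[
 \dim W_m=h_m-h_{m-1}=m+1.
\]

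\emph{Explicit description of $W_m$.} Move $Z$ to the other side: $\langle Zp,q\rangle=\langle p,Zq\rangle$ with $Zq\in H_{m+1}$. By \eqref{eq:harmonic-decomposition}, $H_{m+1}$ restricts to $\mathcal Y_{m+1}\oplus H_{m-1}$, an orthogonal sum. So $q\in W_m$ exactly when the restriction of $Zq$ lies in $\mathcal Y_{m+1}$. Two homogeneous polynomials of the same degree that agree on $\Sph$ are equal, so this says that $Zq$ is itself a harmonic polynomial.

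Conversely, a harmonic homogeneous polynomial $h$ of degree $m+1$ that is odd in $Z$ vanishes on $\{Z=0\}$, and is therefore divisible by $Z$. Thus
\[
 W_m=\{h/Z:\ h\in\mathcal Y_{m+1},\ h\text{ odd in }Z\}.
\]
In the associated-Legendre basis, the odd-in-$Z$ solid harmonics are of the form $\operatorname{Re}$ or $\operatorname{Im}$ of $(X+iY)^k g_k(Z,X^2+Y^2)$, where $k\equiv m\pmod 2$ and $0\le k\le m$. This list has $m+1$ elements, consistent with the dimension count above.

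\emph{Zero set.} Take the top frequency $k=m$. The polynomial $(X+iY)^mZ$ is harmonic, so both $\operatorname{Re}(X+iY)^m$ and $\operatorname{Im}(X+iY)^m$ lie in $W_m$. Their common real zeros satisfy $X=Y=0$. Therefore $\mathcal Z_{\R}(W_m)\subset\{[0:0:1]\}$.

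At $[0:0:1]$, every basis element with $k\ge1$ vanishes, because it carries the factor $(X+iY)^k$. The frequency $k=0$ occurs only when $m$ is even. In that case the zonal solid harmonic is $h_0=|x|^{m+1}P_{m+1}(Z/|x|)$, which is odd in $Z$, and $q_0=h_0/Z$ takes the value $P_{m+1}(1)=1\neq0$ at the pole. Hence the zero set is empty for $m$ even and equals $\{[0:0:1]\}$ for $m$ odd.

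The main obstacle is the middle step: justifying cleanly that the orthogonality condition is equivalent to harmonicity of $Zq$ as a polynomial, and that the odd-in-$Z$ harmonics are exactly $Z$ times elements of $H_m$. I would handle this with the uniqueness of homogeneous extensions from $\Sph$, together with the divisibility argument above. Beyond that, one only needs the standard fact that the odd-in-$Z$ part of $\mathcal Y_{m+1}$ is spanned by the listed frequencies.
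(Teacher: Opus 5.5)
Your proof is correct. The rank and dimension count is the same as the paper's, but you handle the zero set by a different route.

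The paper never describes $W_m$ directly. It notes that $[v]$ is a common zero exactly when the reproducing kernel $\Kcal_m(v,\cdot)=C_m^{3/2}(v\cdot x)$ lies in $W_m^\perp=ZH_{m-1}$, i.e.\ vanishes on the equator. A horizontal rotation then gives $C_m^{3/2}(\rho\cos\theta)\equiv0$, which forces $\rho=0$. At the pole, divisibility by $Z$ reduces to $C_m^{3/2}(0)=0$, which holds exactly for odd $m$.

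You instead work with $W_m$ itself. You use $\langle Zp,q\rangle=\langle p,Zq\rangle$ and the orthogonal splitting $H_{m+1}=\mathcal Y_{m+1}\oplus H_{m-1}$ to identify $W_m$ with $Z^{-1}$ times the odd-in-$Z$ part of $\mathcal Y_{m+1}$. You then read the zero set off the associated-Legendre basis.

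Your approach gives an explicit basis and explicit witnesses. The sectoral elements $\operatorname{Re}(X+iY)^m$ and $\operatorname{Im}(X+iY)^m$ are exactly $q_1,q_3$ of Appendix~\ref{app:kernels} for $m=3$, and $q_3,4q_1$ for $m=4$. The cost is importing the standard basis of $\mathcal Y_{m+1}$. The paper's kernel argument is shorter and needs only the parity of $C_m^{3/2}$ and the fact that a nonzero polynomial cannot vanish on an interval.

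One step should be stated explicitly rather than left as ``consistent with the dimension count'': for odd $m$ you need the listed quotients to span $W_m$. The main obstacle you flag disappears at this point. Your $m+1$ quotients are linearly independent, because they come from distinct mutually orthogonal harmonics. They lie in $W_m$ by the easy direction. Since $\dim W_m=m+1$, they form a basis. So you never need to prove separately that a harmonic polynomial divisible by $Z$ is odd in $Z$.
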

\begin{proof}
On $ZH_{m-1}\subset H_m$, the pairing becomes $\int Z^2pp'\dd\sigma$, a positive-definite inner product on $H_{m-1}$. Thus the rank and dimension assertions follow, and
\[
 W_m=(ZH_{m-1})^\perp
\]
in $H_m$ with its spherical $L^2$ inner product. If $[v]\in\mathcal Z_{\R}(W_m)$, choose its representative $v\in\Sph$. Then the reproducing representative $\Kcal_m(v,\cdot)$ belongs to $ZH_{m-1}$ and vanishes on the equator. Writing $\rho=(v_1^2+v_2^2)^{1/2}$ and rotating horizontally gives
\[
 C_m^{3/2}(\rho\cos\theta)=0\qquad(0\le\theta\le2\pi).
\]
A nonzero polynomial cannot vanish on an interval, so $\rho=0$. At the pole, the homogeneous representative of $C_m^{3/2}(Z)$ is divisible by $Z$ exactly when $m$ is odd. Indeed $C_m^{3/2}(0)$ is zero for odd $m$ and nonzero for even $m$. This proves the stated zero sets.
\end{proof}

\begin{theorem}\label{thm:gap}
For $m\ge2$, no positive formula of degree $2m+1$ on $\Sph$ has exactly $F_m+1$ nodes.
\end{theorem}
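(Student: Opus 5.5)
The plan is to combine the hemisphere count of Lemma~\ref{lem:hemisphere} with a well-chosen plane through a single node, and then derive a contradiction from Lemmas~\ref{lem:interpolate} and~\ref{lem:mixed-zero}. Suppose a positive formula of degree $2m+1$ has $N=F_m+1=2h_m+1$ nodes. Choose a node $y$ and a unit vector $u$ with $u\cdot y=0$ such that no other node lies on the great circle $u^\perp$. Such a $u$ exists because only finitely many great circles through $y$ contain a second node. After a rotation we may take $u=e_3$, so the great circle is the equator $Z=0$. Then $y$ lies on the equator and is not the pole $[0:0:1]$.

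Next I would tilt $u$ slightly in both directions, so that $y$ falls first into $H_+$ and then into $H_-$, while the other nodes stay on their sides. Lemma~\ref{lem:hemisphere} requires at least $h_m$ nodes in each open hemisphere for both tilts. Since only $2h_m$ nodes besides $y$ are available, the split is forced: exactly $h_m$ nodes $X_+$ have $z>0$ and exactly $h_m$ nodes $X_-$ have $z<0$. In the identity \eqref{eq:hemisphere} the term of $y$ drops out because $z(y)=0$. Each side is then a positive-definite form on $H_m$ built from exactly $h_m$ rank-one evaluations. Hence evaluation on $X_+$, and separately on $X_-$, is a bijection $H_m\to\R^{h_m}$. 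In addition, odd exactness gives $\sum_iw_iz_ip(x_i)q(x_i)=0$ for all $p,q\in H_m$, so the two Gram forms coincide as bilinear forms.

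The core step uses the exactness relation
\[
 \sum_iw_iz_ip(x_i)q(x_i)=\mathcal B_m(p,q)
 \qquad(p\in H_{m-1},\ q\in H_m),
\]
whose integrand has degree $2m$. For $q\in W_m$ the right-hand side vanishes. I would apply Lemma~\ref{lem:interpolate} to $A=X_+$ (or $X_-$) together with the extra point $[y]$, to obtain surjectivity of evaluation on $H_{m+1}$ at these $h_m+1$ points. The aim is then to build a degree-$(2m+1)$ test polynomial, for example $q\cdot r$ with $r\in H_{m+1}$ interpolating prescribed values on $X_\pm\cup\{y\}$, or $Zp\,q$ combined with the coincidence of the two Gram forms. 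Its exactness identity should reduce to the single term $w_y\,q(y)\,r(y)=0$. Since $w_y>0$ and $r(y)$ can be prescribed nonzero, this would show that every $q\in W_m$ vanishes at $[y]$, that is, $[y]\in\mathcal Z_{\R}(W_m)$.

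Lemma~\ref{lem:mixed-zero} then gives the contradiction. For even $m$ the zero set $\mathcal Z_{\R}(W_m)$ is empty. For odd $m$ it is only the pole $[0:0:1]$, which $y$ is not, since $y$ lies on the equator. The hypothesis $m\ge2$ should enter through the dimension count $\dim W_m=m+1$ and through the need for enough interpolating freedom in $H_{m-1}$.

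I expect the main obstacle to be the third step: finding the exact test polynomial whose cubature identity isolates the node $y$. The bijectivity on $X_\pm$ kills polynomials that vanish there, so the cancellations must come from combining the mixed pairing $\mathcal B_m$ with the equality of the two hemisphere Gram forms, rather than from naive interpolation. I would first try $r=\ell\,p_0$, with $\ell$ a linear form vanishing on a chosen plane and $p_0\in H_m$ interpolating on $X_-$, following the pattern of the proof of Lemma~\ref{lem:interpolate}.
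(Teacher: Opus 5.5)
\textbf{The core step is missing.} Your first two steps need two small repairs. First, $y$ must be an \emph{unpaired} node. One exists because $N$ is odd; if $-y$ were a node, every great circle through $y$ would contain it. Second, bijectivity of evaluation on $X_\pm$ should come from Lemma~\ref{lem:hemisphere} applied to the two tilted hemispheres. With $y$ on the boundary, a $p$ vanishing on $X_+\cup X_-$ only gives $\int p^2\,d\sigma=w_yp(y)^2$.

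The genuine gap is the third step. It carries the whole content of the theorem and is not supplied. The only identities in which $q\in W_m$ matters are those for $Zpq$ with $p\in H_{m-1}$, and these carry the factor $Z(y)=0$. In the odd test $qr$ with $r\in H_{m+1}$, parity already gives $\int qr\,d\sigma=0$ for every $q\in H_m$, so $W_m$ plays no role. The step therefore amounts to isolating $y$ by an odd form of degree $2m+1$ vanishing at the other $2h_m$ nodes; such a form would give a contradiction directly.

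This isolation is not available in general. Lemma~\ref{lem:interpolate}, started from $X_+$, controls only $X_+\cup\{y\}$ in degree $m+1$ and gains one direction per extra degree. So $X_-$ cannot be absorbed within degree $2m+1$ unless most of $X_-$ is antipodal to $X_+$. Moreover, suppose $2m+3$ unpaired nodes lie on one great circle, which is the situation left after the paper reduces to $k=h_{m-1}$ pairs. An odd form of degree $2m+1$ vanishing at $2m+2$ of them restricts to a binary form with $2m+2$ projective zeros on that line, so it also vanishes at the last one.

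An explicit example shows that $m\ge2$ must be used essentially. The poles with weight $1/6$ and an equatorial regular pentagon with weights $2/15$ form a positive degree-three formula with $7=F_1+1$ nodes. For a pentagon vertex $y$ and generic $u\perp y$, the form $q(x)=y\cdot x$ lies in $W_1$ and has $q(y)=1$. Your first two steps, and every identity and lemma you intend to invoke, hold verbatim at $m=1$. So they cannot by themselves yield $[y]\in\mathcal Z_{\R}(W_m)$, and the proposal does not say where $m\ge2$ would enter.

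\textbf{The missing idea is global information from all hemispheres, not only those near one great circle.} The paper removes the $k$ antipodal pairs. The unpaired set $Y$ then satisfies $\sum_{y\in Y}\sgn(u\cdot y)\in\{-1,1\}$ for every admissible $u$. The real Sylvester--Gallai theorem forces all unpaired directions onto one projective line, and \emph{that} line is taken as $Z=0$.

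For a hemisphere $B$ with exactly $h_m$ nodes, if $k<h_{m-1}$ some $ZQ$ with $Q\in H_{m-1}$ would vanish on $B$; hence $k\ge h_{m-1}$. If $k>h_{m-1}$, only $s=h_m+1-k\le m+1$ directions must be adjoined to $B$. Lemma~\ref{lem:interpolate} then yields an odd polynomial of degree at most $2m+1$ vanishing on the pairs and equal to one on $Y$, which contradicts exactness.

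When $k=h_{m-1}$, the nodes off $Z=0$ are exactly the $h_{m-1}$ pairs. The mixed relation $\sum_j\beta_jZ(a_j)q(a_j)p(a_j)=0$ for $p\in H_{m-1}$ is then a square invertible system. It forces every $q\in W_m$ to vanish at all $h_{m-1}\ge3$ pair directions, contradicting Lemma~\ref{lem:mixed-zero}. So the mixed-zero lemma is applied at the paired directions, not at an unpaired node, and $m\ge2$ enters exactly through $h_{m-1}\ge3>1$.
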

\begin{proof}
Assume $N=2h_m+1$. Lemma~\ref{lem:hemisphere} says that every open hemisphere whose boundary contains no node contains $h_m$ or $h_m+1$ nodes. Remove all antipodal pairs, denote their number by $k$, and call the remaining nodes $Y$. Every pair contributes one node to each such hemisphere, hence, for every $u\in\Sph$ satisfying $u\cdot x_i\ne0$ for all $i$,
\begin{equation}\label{eq:discrepancy}
 \sum_{y\in Y}\sgn(u\cdot y)\in\{-1,1\}.
\end{equation}
The projective directions of $Y$ are collinear. Otherwise, choose a projective line avoiding this finite set and use it as the line at infinity. The real Sylvester--Gallai theorem~\cite{BorweinMoser} then gives an affine, hence projective, line containing exactly two of the directions. Near a normal to their span, their two signs can be varied independently while every other sign stays fixed. The three resulting values $c-2,c,c+2$ cannot all lie in $\{-1,1\}$. Rotate so that the common line is $Z=0$.

Choose an open hemisphere containing exactly $h_m$ nodes and having no node on its boundary, and let $B$ be its projective directions. The hemisphere form is positive definite with $h_m$ summands, so evaluation on $B$ is an isomorphism on $H_m$. Put $g=h_{m-1}$. If $k<g$, there is a nonzero $Q\in H_{m-1}$ vanishing at every paired direction of $B$ off the equator. Then $ZQ$ vanishes on all of $B$, a contradiction. Thus $k\ge g$.

If $k>g$, the full projective support is obtained from $B$ by adjoining
\[
 s=h_m+1-k\le h_m-g=m+1
\]
points. Lemma~\ref{lem:interpolate} gives interpolation in degree $d=m+s\le2m+1$. If $d$ is even, raise it to $d+1$ by a linear form nonzero on the finite support, dividing the prescribed values by that form before interpolating. There is therefore an odd homogeneous polynomial of degree at most $2m+1$ which vanishes on every paired direction and equals one at every actual unpaired node. Its integral is zero, whereas its cubature sum is the positive total weight of $Y$. Consequently $k=g$.

None of these $g$ paired directions lies on $Z=0$, and their evaluations on $H_{m-1}$ are independent: either failure would again produce a nonzero $ZQ$ vanishing on $B$. Choose representatives $a_1,\ldots,a_g$ and total pair weights $\beta_j>0$. For $q\in W_m$, degree-$2m$ exactness gives
\[
 0=\sum_{j=1}^g\beta_jZ(a_j)q(a_j)p(a_j)
 \qquad(p\in H_{m-1}).
\]
The evaluation matrix is invertible and every $\beta_jZ(a_j)$ is nonzero. Hence all $q\in W_m$ vanish at all $g$ directions. Lemma~\ref{lem:mixed-zero} allows at most one such direction, whereas $g=m(m+1)/2\ge3$. This contradiction proves the theorem.
\end{proof}

\section{Low-degree rigidity and exact constructions}\label{sec:low}
Theorem~\ref{thm:gap} excludes twenty-one nodes in degree seven and thirty-one nodes in degree nine. It remains to exclude the respective Fisher values, twenty and thirty. At equality the evaluation matrix is square, and its orthogonality imposes an impossible integer count on the pairwise inner products.

\begin{lemma}\label{lem:tight}
If a degree-$2m+1$ positive formula has $N=F_m$, its support is antipodal. On one representative from each of its $h_m$ pairs, evaluation on $H_m$ is an isomorphism. The two weights in each pair are equal, their total is $1/h_m$, and distinct pair directions satisfy $\Kcal_m(v_i,v_j)=0$.
\end{lemma}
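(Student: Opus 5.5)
We plan to prove Lemma~\ref{lem:tight} using only Lemma~\ref{lem:hemisphere} and the sampling identity on $H_m$. Take $N=F_m=2h_m$ and choose a generic $u$, so that no node lies on the great circle $u\cdot x=0$. By Lemma~\ref{lem:hemisphere}, each open hemisphere then contains exactly $h_m$ nodes. Writing $B_\pm$ for the node sets of $H_\pm(u)$, the two hemisphere forms in \eqref{eq:hemisphere} are positive definite on $H_m$ and each is a sum of $h_m$ rank-one evaluations. Hence evaluation on $H_m$ is an isomorphism on $B_+$, and likewise on $B_-$.

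\textbf{Antipodality.} We argue as in the proof of Theorem~\ref{thm:gap}. Let $Y$ be the set of unpaired nodes. For every generic $u$ the signed count $\sum_{y\in Y}\sgn(u\cdot y)$ equals $0$. Suppose $Y\neq\varnothing$.

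If the projective directions of $Y$ are not collinear, the real Sylvester--Gallai theorem gives a line through exactly two of them. Perturbing $u$ near a normal to their span flips those two signs independently while all other signs stay fixed, so the sum takes the three values $c-2,c,c+2$, which cannot all be $0$.

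If the directions are collinear but $Y\ne\varnothing$, we rotate $u$ across a single direction of $Y$ on that line. Choosing $u$ near the plane orthogonal to that one direction, and away from the others, changes the sum by $\pm2$, again a contradiction. (Distinct unpaired nodes may share a direction only if they are antipodal, and those were removed.)

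So $Y=\varnothing$, the support is antipodal, and $k=h_m$ pairs remain. Each hemisphere contains one representative of each pair, so evaluation on the $h_m$ pair directions $v_1,\dots,v_{h_m}$ is an isomorphism on $H_m$.

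\textbf{Equal weights within pairs.} Let the pair at $\pm v_j$ carry weights $w_j^\pm$. For each $j$, interpolation gives $p\in H_m$ with $p(v_i)=\delta_{ij}$. Then $(u\cdot x)p^2$ is odd, and exactness forces
\[
 (w_j^+-w_j^-)\,(u\cdot v_j)=0 ,
\]
because $p(-v_i)^2=p(v_i)^2$. Since $u\cdot v_j\ne0$, we get $w_j^+=w_j^-$.

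\textbf{Total pair weight and orthogonality.} Put $\beta_j=w_j^++w_j^-$. Degree-$2m$ exactness gives $\sum_j\beta_j p(v_j)q(v_j)=\langle p,q\rangle$ for $p,q\in H_m$. In the notation of \eqref{eq:sampling-identity}, take an orthonormal basis $\psi_\alpha$ of $H_m$ and form the square matrix $V_{j\alpha}=\sqrt{\beta_j}\,\psi_\alpha(v_j)$. Then $V^*V=I$, so $V$ is orthogonal and therefore also $VV^*=I$. By the reproducing property this reads
\[
 \sqrt{\beta_i\beta_j}\,\Kcal_m(v_i,v_j)=\delta_{ij}.
\]
Off the diagonal this gives $\Kcal_m(v_i,v_j)=0$. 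On the diagonal it gives $\beta_i h_m=1$, so $\beta_i=1/h_m$.

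The main obstacle is the antipodality step, in particular the degenerate collinear configurations where Sylvester--Gallai gives nothing. We handle that case by the single-direction rotation argument above, and we need to check that the genericity of $u$ can be kept throughout the perturbation.
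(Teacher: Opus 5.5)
Your proposal is correct and follows the paper's proof. It uses the hemisphere counts from Lemma~\ref{lem:hemisphere}, the odd test $(u\cdot x)p_j^2$ with a Lagrange form to equalize the pair weights, and $V^*V=I\Rightarrow VV^*=I$ together with \eqref{eq:hom-kernel} to get the pair weight $1/h_m$ and $\Kcal_m(v_i,v_j)=0$. The only difference is that your Sylvester--Gallai branch is unnecessary. The single-crossing argument you reserve for the collinear case works for any unpaired node $y$: no other node lies on $y^\perp$, so crossing it at a point off the finitely many other nodal great circles changes the count in $H_+(u)$ by one. That is exactly how the paper argues, so the ``main obstacle'' you flag does not arise.
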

\begin{proof}
Every open hemisphere whose boundary contains no node has $h_m$ nodes. An unpaired node would change this count upon crossing its orthogonal great circle at a point on no other nodal great circle. Thus the support is antipodal. The positive hemisphere form has exactly $h_m$ summands, so evaluation is invertible. For a Lagrange form $p_j\in H_m$, the odd test $(u\cdot x)p_j(x)^2$ forces equality of the two weights in the $j$th pair. If $\lambda_j$ denotes the pair weight and $p_1,\ldots,p_{h_m}$ is an orthonormal basis, the square matrix $M_{j\alpha}=\sqrt{\lambda_j}p_\alpha(v_j)$ satisfies $M^TM=I$ by degree-$2m$ exactness. Thus $MM^T=I$. Its diagonal and off-diagonal entries, together with \eqref{eq:hom-kernel}, give the result.
\end{proof}

\begin{proposition}\label{prop:low-lower}
There are no positive degree-seven formulas with twenty nodes and no positive degree-nine formulas with thirty nodes. Consequently $N_7\ge22$ and $N_9\ge32$.
\end{proposition}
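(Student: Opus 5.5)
The plan is to reduce the two equality cases to an arithmetic count of pairwise inner products. Let $m=3$ or $m=4$ and suppose a positive formula of degree $2m+1$ has exactly $N=F_m$ nodes. Lemma~\ref{lem:tight} then says four things. The support is antipodal. Each pair carries total weight $1/h_m$, split equally, so every node has weight $1/F_m$ and the formula is an equal-weight antipodal spherical $(2m+1)$-design. The $h_m$ pair directions $v_1,\ldots,v_{h_m}$ satisfy $\Kcal_m(v_i,v_j)=C_m^{3/2}(v_i\cdot v_j)=0$ for $i\ne j$. Consequently every inner product $x_i\cdot x_j$ between nodes lies in $\{\pm1\}$ or in the zero set of $C_m^{3/2}$. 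Concretely, $C_3^{3/2}\propto 5t^3-3t$ has zeros $0,\pm\sqrt{3/5}$, and $C_4^{3/2}\propto21t^4-14t^2+1$ has zeros $\pm s_\pm$ with $s_\pm^2=(7\pm2\sqrt7)/21$. One could instead stop at the observation that this is a tight design and cite the nonexistence of tight $7$- and $9$-designs on $\Sph$ recalled in the introduction~\cite{BannaiDamerellI,NebeVenkov}. I would nevertheless give the short self-contained count below.

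Fix a node $x$. For each admissible value $s$, let $n_s$ be the number of nodes $y\neq\pm x$ with $x\cdot y=s$; antipodality gives $n_s=n_{-s}$. Exactness of the equal-weight formula for the zonal polynomials $P_\ell(x\cdot y)$, $1\le\ell\le 2m$, gives the linear identities
\[
 \sum_{y}P_\ell(x\cdot y)=0,\qquad \sum_y 1=F_m .
\]
Here the sum includes $y=\pm x$, which contribute $1+(-1)^\ell$. Only even $\ell$ carry information, since the odd identities hold automatically by antipodality. The unknowns $n_s$ are nonnegative integers.

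For $m=3$, with $F_3=20$, the unknowns are $n_0$ and $n_s$ with $s=\sqrt{3/5}$. The counting relation is $n_0+2n_s=18$. Since $P_2(\sqrt{3/5})=2/5$ and $P_2(0)=-1/2$, the $\ell=2$ identity reads $2+\tfrac45 n_s-\tfrac12 n_0=0$. Substituting $n_0=18-2n_s$ gives $n_s=35/9$, which is not an integer. This is the contradiction for degree seven.

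For $m=4$, with $F_4=30$, the unknowns are $n_{s_+}$ and $n_{s_-}$, with counting relation $2n_{s_+}+2n_{s_-}=28$. The $\ell=2$ identity is
\[
 2+2n_{s_+}P_2(s_+)+2n_{s_-}P_2(s_-)=0,
\]
with $P_2(s_\pm)=(3s_\pm^2-1)/2=(\pm\sqrt7)/7$. Substituting gives $2+\tfrac{2\sqrt7}{7}(n_{s_+}-n_{s_-})=0$, so $n_{s_+}-n_{s_-}=-\sqrt7$. This is irrational, hence impossible; if I have slipped in the arithmetic, the $\ell=4$ identity can be added to the system. The main thing to check carefully is exactly this step: the zero sets of $C_m^{3/2}$ and the Legendre values must be computed correctly, and the system must force a non-integer solution rather than merely an awkward one.

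Once $N=F_m$ is excluded in both cases, Theorem~\ref{thm:gap} excludes $N=F_m+1$, and Lemma~\ref{lem:hemisphere} gives $N\ge F_m$. Together these yield $N_7\ge F_3+2=22$ and $N_9\ge F_4+2=32$.
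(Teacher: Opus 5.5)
Your strategy is the paper's. You pass to the tight configuration of Lemma~\ref{lem:tight}, derive a nonintegral count from a degree-two moment identity, and finish with Theorem~\ref{thm:gap} and Lemma~\ref{lem:hemisphere}. The degree-nine computation is correct. The degree-seven computation, however, is carried out for the wrong set of inner products.

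The kernel is $\Kcal_3=7P_3+3P_1$, that is, $C_3^{3/2}(t)=P_4'(t)=\tfrac52t(7t^2-3)$. The polynomial $5t^3-3t$ you used is $2P_3=2C_3^{1/2}$. Lemma~\ref{lem:tight} therefore forces nonzero inner products $\pm\sqrt{3/7}$, not $\pm\sqrt{3/5}$. Your value $n_s=35/9$ thus concerns a configuration that never arises. This is exactly the step you flagged as needing care.

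The repair is immediate. With $P_2(\sqrt{3/7})=1/7$ and $P_2(0)=-1/2$, your two relations become
\[
 2-\tfrac12 n_0+\tfrac27 n_s=0,\qquad n_0+2n_s=18,
\]
so $n_s=49/9$, which is again not an integer. With this correction your argument is the per-node form of the paper's. The paper uses $\sum_jv_jv_j^T=(h_m/3)I_3$ and counts $\tr(G^2)$ over all pairs of directions. It obtains $e=\tfrac12\cdot10\cdot\tfrac{49}{9}=\tfrac{245}{9}$ for $m=3$, and $a=\tfrac{15}{2}n_{s_+}=\tfrac{105}{2}-\tfrac{15\sqrt7}{4}$ for $m=4$. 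These are exactly what your local counts $n_s=49/9$ and $n_{s_+}=7-\sqrt7/2$ give upon summation. Your local version is marginally stronger, since nonintegrality already occurs at every single node, but the content is the same degree-two trace identity.
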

\begin{proof}
For $m=3$ the kernel is
\[
 \Kcal_3(t)=\frac52t(7t^2-3).
\]
Lemma~\ref{lem:tight} would give ten equally weighted projective directions with off-diagonal inner products $0$ or $\pm\sqrt{3/7}$. Degree-two exactness gives $\sum_jv_jv_j^T=(10/3)I_3$. The Gram matrix $G$ consequently has $\tr(G^2)=100/3$. If $e$ is the number of unordered nonorthogonal pairs, then
\[
 10+\frac67e=\frac{100}{3},\qquad e=\frac{245}{9},
\]
which is impossible.

For $m=4$,
\[
 \Kcal_4(t)=1+5P_2(t)+9P_4(t)
 =\frac{15}{8}(21t^4-14t^2+1).
\]
The fifteen hypothetical pair directions have total weights $1/15$ and squared off-diagonal inner products
\[
 \frac{7+2\sqrt7}{21},\qquad \frac{7-2\sqrt7}{21}.
\]
Now $\sum_jv_jv_j^T=5I_3$ and $\tr(G^2)=75$. If $a$ of the $105$ unordered pairs have the first squared value, then
\[
 15+2\left(a\frac{7+2\sqrt7}{21}+(105-a)\frac{7-2\sqrt7}{21}\right)=75,
 \qquad a=\frac{105}{2}-\frac{15\sqrt7}{4},
\]
another noninteger count. Theorem~\ref{thm:gap} excludes the next cardinalities.
\end{proof}

\subsection{The Reznick twenty-two-node formula}
The sextic identity from~\cite{Reznick}, also reproduced in~\cite{EW}, takes the form
\begin{equation}\label{eq:reznick}
\begin{split}
540(X^2+Y^2+Z^2)^3={}&378X^6+378Y^6+280Z^6\\
&+\sum_{s=\pm1}(\sqrt3X+2sZ)^6
 +\sum_{s=\pm1}(\sqrt3Y+2sZ)^6\\
&+\sum_{s,t=\pm1}(\sqrt3X+s\sqrt3Y+tZ)^6.
\end{split}
\end{equation}
Its terms give eleven projective directions
\begin{equation}\label{eq:22directions}
 e_1,e_2,e_3,\qquad
 \frac{(\sqrt3,0,\pm2)}{\sqrt7},\quad
 \frac{(0,\sqrt3,\pm2)}{\sqrt7},\quad
 \frac{(\sqrt3,\pm\sqrt3,\pm1)}{\sqrt7}.
\end{equation}
The two signs in the last expression are independent.

\begin{proposition}\label{prop:22}
Replace every direction in \eqref{eq:22directions} by its two antipodal representatives. Give the four nodes $\pm e_1,\pm e_2$ weight $1/20$, the two nodes $\pm e_3$ weight $1/27$, and the remaining sixteen nodes weight $49/1080$. The resulting positive formula has degree seven. Hence $N_7=22$.
\end{proposition}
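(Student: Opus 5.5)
Since the node set is antipodal and each antipodal pair carries equal weights, every odd moment vanishes. By the reduction recorded in Subsection~\ref{subsec:spherical-cubature}, exactness through degree seven therefore follows from exactness on the homogeneous sextics $H_6$. Each weight is positive and the total is
\[
4\cdot\tfrac1{20}+2\cdot\tfrac1{27}+16\cdot\tfrac{49}{1080}=\tfrac{54+20+196}{270}=1 .
\]
Only degree-six exactness remains to be checked.

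\emph{Route via the sextic identity.} I plan to derive exactness from the Reznick identity \eqref{eq:reznick} rather than test all monomials. The standard mechanism is this. For $u\in\R^3$, integrate $(u\cdot x)^6$ over $\sigma$. Rotation invariance and \eqref{eq:spherical-moments} give
\[
\int_{\Sph}(u\cdot x)^6\dd\sigma=\tfrac{15}{105}\,|u|^6=\tfrac17|u|^6 .
\]
Dividing \eqref{eq:reznick} by $540\cdot 7$ and substituting $(X,Y,Z)=u$ then yields
\[
\int_{\Sph}(u\cdot x)^6\dd\sigma=\sum_j c_j(\ell_j\cdot u)^6 ,
\]
with $c_j$ the coefficients in \eqref{eq:reznick} divided by $3780$. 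Write $\ell_j=|\ell_j|v_j$ with $v_j\in\Sph$ the directions \eqref{eq:22directions}. The identity becomes
\[
\int(u\cdot x)^6\dd\sigma=\sum_j c_j|\ell_j|^6(u\cdot v_j)^6 .
\]
Splitting each term equally between $\pm v_j$ gives weights $c_j|\ell_j|^6/2$ at the nodes $\pm v_j$. The powers $(u\cdot x)^6$, $u\in\R^3$, span $H_6$, so this is exactness on $H_6$. I will then compute these weights and compare them with the stated ones:
\begin{itemize}
\item For $e_1,e_2$: $\tfrac{378}{3780\cdot2}=\tfrac1{20}$.
\item For $e_3$: $\tfrac{280}{7560}=\tfrac1{27}$.
\item For the eight remaining directions, $|\ell_j|^2=7$: $\tfrac{343}{7560}=\tfrac{49}{1080}$.
\end{itemize}
All three match.

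\emph{Main obstacle.} The hardest step is making sure \eqref{eq:reznick} is correct as transcribed, since everything rests on it. I would check it directly by comparing coefficients of the monomials $X^6$, $Z^6$, $X^4Z^2$, $X^2Z^4$, $X^2Y^2Z^2$, and so on, using the symmetry in $X\leftrightarrow Y$ and sign changes to reduce the work. Odd-exponent monomials cancel in the sums over $s,t$. As an example, the $X^6$ coefficient is
\[
378+2\cdot27+4\cdot27=540 .
\]
As an independent check, Appendix~\ref{app:latitude22} can be consulted for the Lobatto-latitude moment verification.

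\emph{Conclusion.} With the identity verified, the formula has $22$ nodes and degree seven, so $N_7\le22$. Combined with $N_7\ge22$ from Proposition~\ref{prop:low-lower}, this gives $N_7=22$.
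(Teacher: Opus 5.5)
Your proposal is correct and matches the paper's proof: you normalize \eqref{eq:reznick} by $3780=540\cdot7$ to obtain the weights $1/10,1/10,2/27,49/540$ on the eleven directions, use that sixth powers of linear forms span $H_6$, split each weight equally between antipodes, apply the parity reduction, and finish with Proposition~\ref{prop:low-lower}. Your planned coefficient check of \eqref{eq:reznick} goes beyond the paper, which simply cites the identity, and you could add the one-line remark that the $22$ nodes are distinct, since the eleven projective directions are distinct.
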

\begin{proof}
Absorb the scalar coefficients in \eqref{eq:reznick} into the corresponding linear forms. The sum of the sixth powers of their coefficient-vector norms is
\[
 378+378+280+8\cdot343=3780.
\]
Dividing by this number gives a projective identity
\[
 \sum_{j=1}^{11}\lambda_j(u\cdot v_j)^6=\frac{\norm u^6}{7}
 =\int_{\Sph}(u\cdot x)^6\dd\sigma,
\]
where the projective weights are $1/10,1/10,2/27$ on the axes and $49/540$ on each remaining direction. Polarization yields exactness for all homogeneous sextics. The parity reduction in Subsection~\ref{subsec:spherical-cubature} then yields degree-seven exactness after equal splitting within each antipodal pair. The weights are positive, sum to one, and the nodes are distinct. Proposition~\ref{prop:low-lower} completes the proof.
\end{proof}

The bound $w_i\le1/20$ is attained at four nodes. More precisely, with
\[
 q(t)=\frac{t(7t^2-3)}4,\qquad F(t)=(1+t)q(t)^2,
\]
one has $F(1)=2$ and $\int_{\Sph}F(x\cdot y)\dd\sigma=1/10$. Thus every degree-seven formula satisfies
\begin{equation}\label{eq:weight-stability}
 \sum_{j\ne i}w_jF(x_i\cdot x_j)=2\left(\frac1{20}-w_i\right).
\end{equation}
Equality in the weight bound forces $x_i\cdot x_j\in\{-1,0,\pm\sqrt{3/7}\}$ for $j\ne i$. Thus saturation of a single weight restricts every inner product with that node.

\subsection{The Hughes--Waldron thirty-two-node formula}\label{subsec:nine}\label{subsec:nine-input}
The construction in~\cite{HW} gives a weighted spherical half-design of order eight on sixteen projective directions. Six directions come from the twelve vertices of a regular icosahedron and ten from the twenty vertices of its dual dodecahedron, in dual relative orientation. The normalization used there is $\widehat w_j=16\lambda_j$, where $\lambda_j$ is the probability weight of a projective direction. Hence
\[
\begin{aligned}
 \lambda_j&=\frac1{16}\frac{20}{21}=\frac5{84}
 &&\text{on the six icosahedral directions},\\
 \lambda_j&=\frac1{16}\frac{36}{35}=\frac9{140}
 &&\text{on the ten dodecahedral directions}.
\end{aligned}
\]
As shown in~\cite{HW}, splitting each projective weight equally between the two unit antipodes gives a degree-nine cubature formula. Its individual sphere weights are therefore
\begin{equation}\label{eq:32weights}
 \frac5{168}\quad\text{at each icosahedral vertex},\qquad
 \frac9{280}\quad\text{at each dodecahedral vertex}.
\end{equation}
They are positive and satisfy $12(5/168)+20(9/280)=1$. Exactness, rather than only this normalization, is supplied by the cited construction.

\begin{corollary}\label{cor:nine}\label{cor:nine-conditional}
One has $N_9=32$.
\end{corollary}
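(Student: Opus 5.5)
The corollary is proved by combining a lower bound and an upper bound that are both already on the page. I will establish $N_9\ge32$ and $N_9\le32$ separately.

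\textbf{Lower bound.} Take $m=4$, so the degree is $2m+1=9$ and $F_4=(m+1)(m+2)=30$.
\begin{itemize}
\item Lemma~\ref{lem:hemisphere} gives $N\ge30$ for every positive degree-nine formula.
\item Proposition~\ref{prop:low-lower} rules out $N=30$: by Lemma~\ref{lem:tight}, such a formula would have fifteen pair directions, and the trace identity would force the noninteger count $a=105/2-15\sqrt7/4$.
\item Theorem~\ref{thm:gap}, which applies since $m=4\ge2$, rules out $N=31$.
\end{itemize}
Hence $N_9\ge32$. Equivalently, this is the case $m=4$ of \eqref{eq:Fishergap}, which gives $N_9\ge F_4+2=32$.

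\textbf{Upper bound.} I will use the Hughes--Waldron configuration of Subsection~\ref{subsec:nine}: twelve icosahedral vertices and twenty dodecahedral vertices, with weights \eqref{eq:32weights}.
\begin{itemize}
\item The weights are positive and sum to one, since $12(5/168)+20(9/280)=1$.
\item The nodes are distinct. The icosahedron and the dual dodecahedron share no vertex directions, so the six icosahedral and ten dodecahedral projective directions are sixteen distinct lines, giving $32$ distinct nodes.
\item Degree-nine exactness is the cited result of~\cite{HW}. It can be reduced as follows. The formula is antipodal with equal weights in each pair, so all odd moments vanish. By the parity reduction in Subsection~\ref{subsec:spherical-cubature}, degree-nine exactness is then equivalent to exactness on homogeneous forms of degree $8$. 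This is exactly the weighted half-design of order eight on the sixteen projective directions.
\end{itemize}
Hence $N_9\le32$, and together with the lower bound, $N_9=32$.

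\textbf{Main obstacle.} The only step that is not already proved in the paper is the exactness of the thirty-two-node formula. The plan relies on~\cite{HW} for it. If an independent check were wanted, I would use the icosahedral group $I_h$. Every node set and weight is invariant under $I_h$, and the invariant harmonics of $I_h$ occur only in degrees $0,6,10,\dots$. It therefore suffices to kill the single invariant harmonic in degree $6$. Since the parity reduction already handles degree-nine exactness through homogeneous degree $8$, no higher invariant needs checking. Killing the degree-$6$ invariant is one linear condition on the ratio of the two weight classes: with $Y_6$ the $I_h$-invariant degree-$6$ harmonic and $\lambda_{\mathrm{ico}}=5/84$, $\lambda_{\mathrm{dod}}=9/140$ the projective weights above, it is
\[
6\lambda_{\mathrm{ico}}\,Y_6(\text{ico})+10\lambda_{\mathrm{dod}}\,Y_6(\text{dod})=0.
\]
This condition is what fixes the weights $5/168$ and $9/280$.
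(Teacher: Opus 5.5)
Your proof is correct and follows the paper's argument: the lower bound is Proposition~\ref{prop:low-lower}, where Lemma~\ref{lem:tight} and the trace count exclude thirty nodes and Theorem~\ref{thm:gap} excludes thirty-one, and the upper bound is the cited Hughes--Waldron formula. Your optional $I_h$-invariance check is also sound: the only invariant harmonic of $I_h$ in degrees $1$ through $9$ lies in degree six, and killing it forces the icosahedral-to-dodecahedral weight ratio $25/27$, which matches \eqref{eq:32weights}.
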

\begin{proof}
The preceding formula gives $N_9\le32$, and Proposition~\ref{prop:low-lower} gives the reverse inequality.
\end{proof}
The general construction in Section~\ref{sec:cascade} gives thirty-four nodes in degree nine. The specialized formula above improves this finite-degree count.

\section{Lobatto constructions and the sharp node-count bound}\label{sec:cascade}
Building on the classical spherical product-Lobatto framework~\cite{StroudSphere,LuoMeng} and related moment-map constructions~\cite{KuperbergHatBox}, we combine the Gauss--Lobatto rule with the Toeplitz moment tools of Subsection~\ref{subsec:lobatto-toeplitz}. The latitude levels and total weights are fixed by the one-dimensional rule; the step specific to the present construction is rank-controlled positive angular-moment compression through the nested scalar cascade below.

\subsection{The antipodal latitude scheme and the baseline formula}\label{subsec:lobatto-baseline}
Throughout this section, let $m\ge3$, put $d=2m$ and $\epsilon=m\bmod2$, and use the nodes $z_j$ and weights $b_j$ of Lemma~\ref{lem:lobatto}. Pair the nonzero interior latitude measures antipodally and, when $\epsilon=1$, take an antipodal equatorial measure. The angular moments must satisfy \eqref{eq:latitude-moment-system}; the equation count for each frequency is given by \eqref{eq:q-h}.

As a baseline, put a regular $(2m+1)$-gon with equal angular weights on each nonzero interior latitude and, when necessary, a regular $(2m+2)$-gon on the equator. Choose the polygons at opposite latitudes as antipodal images. Their nonzero Fourier moments through degree $d$ vanish, so the latitude criterion proves exactness through degree $2m+1$. There are $m-\epsilon$ nonzero interior latitudes, $\epsilon$ equatorial circles, and two poles. Hence
\begin{equation}\label{eq:baseline-upper}
 N_{2m+1}\le B_m:=(m-\epsilon)(2m+1)+\epsilon(2m+2)+2
 =2m^2+m+\epsilon+2.
\end{equation}
The following construction decreases this count by allowing selected higher angular moments to be nonzero while keeping the latitude equations satisfied.

\subsection{The scalar cascade and its node budget}
Put $x=m-\epsilon$, so that there are $x/2$ positive latitude pairs. For now use the baseline equator when $\epsilon=1$. Choose
\[
 1\le r\le x/2,\qquad L=d+1-2r,\qquad p=2r.
\]
Then $L>d/2$. Set the moments below $L$ to zero and use the block representation \eqref{eq:moment-block}, with $p=d+1-L=2r$. It remains to choose the lower-triangular Toeplitz contraction $C$ so as to make $\dim\ker(I-C^*C)$ large on most latitude pairs.

First prescribe only $c_L=\beta_{-1}$ on each positive latitude. The $r$ complex latitude equations and Lemma~\ref{lem:disks} leave at most $r$ pairs with $|\beta_{-1}|<1$; we call these pairs active. At every other pair, $C=\beta_{-1}I_p$ has defect-kernel dimension $p$, so \eqref{eq:moment-block} and Lemma~\ref{lem:toeplitz-atoms} give at most $L$ angular nodes.

For each still-active latitude, select an integer chain
\[
 r=h_0>h_1>\cdots>h_s=1,\qquad h_{j+1}\le\lfloor h_j/2\rfloor,
\]
and prescribe
\begin{align}
 C&=\beta_{-1}I_{2r}+(1-|\beta_{-1}|^2)E_{2r,h_0}(B_0),\label{eq:nested-C}\\
 B_j&=\beta_jI_{h_j}+(1-|\beta_j|^2)E_{h_j,h_{j+1}}(B_{j+1}),
 \qquad B_s=\beta_s I_1.\label{eq:nested-B}
\end{align}
At each level the preceding variables are held fixed. The new frequency is $d+1-h_j$, and its moment is $\beta_j$ times the product of the positive defect factors from the earlier active levels. Its latitude constraints are therefore still homogeneous complex linear equations, with $\beta_j=0$ feasible. Lemma~\ref{lem:disks} leaves at most $q(h_j)$ pairs active after this choice.

The frequencies increase with $j$, so a new choice does not alter any previously imposed moment. If $|\beta_j|=1$, the later moments vanish. Repeated application of Lemma~\ref{lem:corner} then gives final defect-kernel dimension $h_j$ at that latitude. This is the link between the number of active pairs and the atom count in the following estimate.

The reference count, with all nonzero interior circles using $L$ atoms, is $B_m-2xr$. The first transition from defect $2r$ to defect $r$ costs at most $2r^2$ nodes. Each later transition costs at most $2q(h_j)(h_j-h_{j+1})$. At most one latitude pair remains after the final scalar stage, costing at most two additional nodes. Thus
\begin{equation}\label{eq:chain-bound}
 N_{2m+1}\le B_m-2xr+Q(h_0,\ldots,h_s),
\end{equation}
where
\begin{equation}\label{eq:chain-Q}
 Q(h_0,\ldots,h_s)=2r^2+
 2\sum_{j=0}^{s-1}\left\lceil\frac{h_j}{2}\right\rceil(h_j-h_{j+1})+2.
\end{equation}
Lemma~\ref{lem:corner} ensures positivity of every moment matrix. Lemma~\ref{lem:toeplitz-atoms} realizes its moments with positive angular weights, which are then multiplied by the positive Lobatto weights. The resulting measure satisfies \eqref{eq:latitude-moment-system}, so the latitude criterion in Subsection~\ref{subsec:lobatto-toeplitz} proves that it is a cubature formula.

For the halving chain $h_{j+1}=\lfloor h_j/2\rfloor$, define
\begin{equation}\label{eq:E-recursion}
 E(0)=0,\quad E(1)=\frac43,\quad
 E(2v)=E(v),\quad E(2v+1)=E(v)+\frac43(v+1).
\end{equation}
The odd recurrence is valid also at $v=0$. Substitution in \eqref{eq:chain-Q} gives $Q_b(r)=\frac83r^2+E(r)$, and induction gives
\begin{equation}\label{eq:E-basic}
 \frac43\le E(r)\le\frac43r\quad(r\ge1),\qquad E(2^ar)=E(r).
\end{equation}
Consequently
\begin{equation}\label{eq:even-budget}
 N_{2m+1}\le B_m-\frac38x^2+
 \min_{0\le r\le\lfloor x/2\rfloor}
 \left\{\frac83(r-3x/8)^2+E(r)\right\}.
\end{equation}
The term $r=0$ simply denotes the baseline construction. For $m=4$, taking $r=1$ or $r=2$ gives $N_9\le34$.

\subsection{Equatorial coupling for odd \texorpdfstring{$m$}{m}}\label{subsec:equator}
For odd $m$, the equatorial moment can be chosen together with those of the latitude pairs. Its contribution to the active-coordinate count compensates exactly for the difference in equatorial node cost. Suppose $m\ge3$ is odd and put $x=m-1$. Define the tail cost
\begin{equation}\label{eq:T-recursion}
 T(0)=0,\qquad T(h)=2\left\lceil\frac h2\right\rceil
 \left(h-\left\lfloor\frac h2\right\rfloor\right)+T(\lfloor h/2\rfloor)
 =\frac23h^2+E(h).
\end{equation}
If $A$ latitude pairs are active at a parent defect $p$ and the next defect is $h=\lfloor p/2\rfloor$, the subsequent cost is at most $2A(p-h)+T(h)$.

Choose an odd parent size $p=2r+1$ with $0\le r\le(m-3)/2$, so
\[
 L=d+1-p=2(m-r)>d/2.
\]
The first frequency now has $r+1$ complex equations. Include the equator as one further disk coordinate $\zeta$; it appears only in the constant latitude equation and adds no new equation.

To realize this coordinate antipodally, use $\psi=2\phi$ on the equator. In the compressed degree-$m$ circle moment problem prescribe only the moment $b_{m-r}=\zeta$. Since $m-r>m/2$, a saturated coordinate needs at most $m-r$ compressed atoms, hence $L$ sphere nodes. An interior coordinate needs at most $m+1$ compressed atoms, hence $d+2$ sphere nodes. Its other moments through the required degree remain zero, so an interior equator need not enter the later cascade. Equatorial saturation saves
\[
 (d+2)-L=p+1
\]
nodes.

At an extreme point of the joint disk system, there are two cases. If the equator is saturated, at most $r+1$ latitude pairs remain active. Their tail cost is at most
\[
 2(r+1)(p-r)+T(r)=2(r+1)^2+T(r)=T(p),
\]
and the equator saves $p+1$. If the equator is interior, at most $r$ latitude pairs remain active; their tail cost is
\[
 2r(p-r)+T(r)=T(p)-(p+1),
\]
and there is no equatorial saving. The same upper bound follows in both cases:
\begin{align}
 N_{2m+1}&\le B_m-xp+T(p)-(p+1)\notag\\
 & =2m^2+4-2(m-2)r+\frac83r^2+E(r).
 \label{eq:odd-budget}
\end{align}
This includes $r=0$, where no later cascade is needed. In particular $m=3$ gives $N_7\le22$.

Define, for every positive integer $y$,
\begin{equation}\label{eq:remainder-def}
 \mathfrak r(y)=\min_{0\le r\le\lfloor y/2\rfloor}
 \left\{\frac83(r-3y/8)^2+E(r)\right\}.
\end{equation}
Equations~\eqref{eq:even-budget} and \eqref{eq:odd-budget} yield the stronger, finite-integer form of our upper bound:
\begin{equation}\label{eq:exact-budget}
 N_{2m+1}\le
 \begin{cases}
 \frac{13}{8}m^2+m+2+\mathfrak r(m),&m\text{ even},\\[2pt]
 \frac{13}{8}m^2+\frac32m+\frac52+\mathfrak r(m-2),&m\text{ odd}.
 \end{cases}
\end{equation}
The expression comes from an antipodal construction and is an even integer when evaluated through its original budget. Any real upper envelope may therefore be rounded down to the nearest even integer.

\subsection{The binary remainder and its sharp constant}\label{sec:integer}
To estimate \eqref{eq:exact-budget}, we must minimize a quadratic rounding cost together with the binary defect $E(r)$. The neighboring values of $E$ are correlated, and this correlation determines the constant in the $y^{2/3}$ bound.

\begin{proposition}\label{prop:halving-optimal}
Among the chains allowed in \eqref{eq:chain-Q}, repeated downward halving minimizes that scalar cost. Moreover
\begin{equation}\label{eq:odd-sum}
 T(r)-T(r-1)=2\odd(r),\qquad
 T(r)=2\sum_{j=1}^r\odd(j),
\end{equation}
where $\odd(j)=j/2^{v_2(j)}$.
\end{proposition}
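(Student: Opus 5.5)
The plan is to reduce the chain optimization to a one-variable recursion, identify its value with the tail cost $T$ of \eqref{eq:T-recursion}, and obtain the odd-part formula \eqref{eq:odd-sum} from the parity split of that recursion. I prove \eqref{eq:odd-sum} first, since the optimality argument uses it.

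\emph{Step 1: the odd-sum identity.} I argue by strong induction on $r\ge1$, with base $T(1)-T(0)=2=2\odd(1)$. Write the recursion \eqref{eq:T-recursion} separately on even and odd arguments:
\[
T(2v)=2v^2+T(v),\qquad T(2v+1)=2(v+1)^2+T(v).
\]
For odd $r=2v+1$ this gives
\[
T(2v+1)-T(2v)=2(v+1)^2-2v^2=2(2v+1)=2\odd(r).
\]
For even $r=2v$, note that $2v-1=2(v-1)+1$, so $T(2v-1)=2v^2+T(v-1)$. Hence
\[
T(2v)-T(2v-1)=T(v)-T(v-1)=2\odd(v)=2\odd(2v)
\]
by the induction hypothesis. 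Summing the increments with $T(0)=0$ gives the closed form $T(r)=2\sum_{j=1}^r\odd(j)$.

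\emph{Step 2: recasting the chain cost.} In \eqref{eq:chain-Q}, the terminal $+2$ is exactly the cost $2\lceil 1/2\rceil(1-0)$ of a final step $h_s=1\to0$. So I append $h_{s+1}=0$ and write
\[
Q=2r^2+\sum_j 2\left\lceil \frac{h_j}{2}\right\rceil(h_j-h_{j+1}),
\]
where the sum runs over chains from $r$ down to $0$ with $h_{j+1}\le\lfloor h_j/2\rfloor$. Define $M(0)=0$ and
\[
M(h)=\min_{0\le h'\le\lfloor h/2\rfloor}\left\{2\left\lceil \frac h2\right\rceil(h-h')+M(h')\right\}.
\]
This is the least tail cost, so the minimal $Q$ equals $2r^2+M(r)$. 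Taking $h'=\lfloor h/2\rfloor$ at every step reproduces the recursion for $T$, so $M\le T$ and halving achieves $T$.

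\emph{Step 3: halving is optimal, $M=T$.} I use strong induction on $h$. Put $k=\lfloor h/2\rfloor$ and $c=\lceil h/2\rceil$. By the induction hypothesis $M(h')=T(h')$ for all $h'\le k$, so it suffices to show, for every $0\le h'\le k$,
\[
T(k)-T(h')\le 2c(k-h').
\]
This inequality says that jumping down further never beats halving. By Step 1, and since $\odd(j)\le j\le k\le c$,
\[
T(k)-T(h')=2\sum_{j=h'+1}^{k}\odd(j)\le 2k(k-h')\le 2c(k-h').
\]
This proves $M=T$, so repeated downward halving minimizes the scalar cost. This is the only nontrivial step, and \eqref{eq:odd-sum} makes it a one-line comparison. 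The remaining point to check is bookkeeping: the $+2$ in \eqref{eq:chain-Q} must correspond to the final step $1\to0$. This holds because $T(1)=2$, and it is consistent with $T(h)=\frac23h^2+E(h)$, since $Q_b(r)=2r^2+T(r)=\frac83r^2+E(r)$.
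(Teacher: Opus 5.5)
Your proof is correct and follows the paper's argument. You prove the odd-sum identity by parity induction on the recursion, then use induction on $r$ to show that the cost $2\lceil r/2\rceil(r-h)+T(h)$ of choosing child $h$ is minimized at $h=\lfloor r/2\rfloor$, via $\odd(j)\le j\le\lfloor r/2\rfloor\le\lceil r/2\rceil$; you sum the increments where the paper checks them one step at a time, and your relaxation to chains that may drop to $0$ from any $h$ is harmless because the relaxed minimum is attained by the halving chain, which passes through $1$.
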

\begin{proof}
The recurrences in \eqref{eq:T-recursion} give the first identity by induction: an even difference reduces to the difference at $r/2$, and an odd difference equals $2r$. Summing proves the second. If the next child of $r$ is $h\le\lfloor r/2\rfloor$, its optimal remaining cost, by induction, is
\[
 2\lceil r/2\rceil(r-h)+T(h).
\]
Increasing $h$ by one changes this quantity by
\[
 -2\lceil r/2\rceil+2\odd(h+1)\le
 -2\lceil r/2\rceil+2(h+1)\le0.
\]
Thus the largest allowed child is an optimal choice at every stage. This does not assert uniqueness of the minimizing chain.
\end{proof}

\begin{lemma}\label{lem:E-correlation}
For every $v\ge1$,
\begin{equation}\label{eq:E-correlation}
 2E(v)+E(v+1)\le\frac83v+\frac43.
\end{equation}
\end{lemma}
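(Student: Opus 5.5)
Throughout I take $v\ge1$ and use the recursion \eqref{eq:E-recursion}: $E(0)=0$, $E(1)=\tfrac43$, $E(2u)=E(u)$, $E(2u+1)=E(u)+\tfrac43(u+1)$. The plan is strong induction on $v$, splitting by the parity of $v$, and strengthening the claim so that it holds for every $v\ge0$; at $v=0$ it reads $0+\tfrac43\le\tfrac43$.

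\emph{Case $v=2u$ with $u\ge1$.} Here $E(v)=E(u)$ and $E(v+1)=E(u)+\tfrac43(u+1)$. The left-hand side is therefore $3E(u)+\tfrac43u+\tfrac43$, and the target is $\tfrac{16}3u+\tfrac43$. So it suffices to show $E(u)\le\tfrac43u$, which is the upper estimate in \eqref{eq:E-basic}.

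\emph{Case $v=2u+1$ with $u\ge0$.} Here $E(v)=E(u)+\tfrac43(u+1)$ and $E(v+1)=E(u+1)$. The left-hand side is $2E(u)+E(u+1)+\tfrac83(u+1)$. The target is $\tfrac83(2u+1)+\tfrac43=\tfrac{16}3u+4$. After subtracting $\tfrac83u+\tfrac83$, the inequality becomes
\[
2E(u)+E(u+1)\le\tfrac83u+\tfrac43,
\]
which is exactly the claim at $u<v$. The induction hypothesis covers it, and for $u=0$ it is the base value checked above.

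Both cases close without difficulty, so I do not expect a real obstacle. The only point to watch is that the bound $E(u)\le\tfrac43u$ is available for $u\ge1$. That follows from its own induction: $E(2u)=E(u)$, and $E(2u+1)\le\tfrac43u+\tfrac43(u+1)=\tfrac43(2u+1)$. The base case $v=0$ must be included so that the odd case with $u=0$, that is $v=1$, is covered; directly, $2\cdot\tfrac43+\tfrac43=4=\tfrac83+\tfrac43$, so equality holds at $v=1$.
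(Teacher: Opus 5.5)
Your proof is correct and is essentially the paper's argument: the paper rescales to $D=3E/4$ and runs the same parity-split induction, using $D(w)\le w$ in the even case and the induction hypothesis at $w$ in the odd case. Extending the claim to $v=0$ is a harmless way to cover $v=1$, which the paper instead checks directly as its base case.
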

\begin{proof}
Set $D(v)=3E(v)/4$, so $D(v)\le v$. The assertion is $2D(v)+D(v+1)\le2v+1$, with equality at $v=1$. If $v=2w$, its left side is $3D(w)+w+1\le4w+1$. If $v=2w+1$, it is $2D(w)+D(w+1)+2w+2\le4w+3$ by induction.
\end{proof}

\begin{proposition}\label{prop:remainder-envelope}
For every positive integer $y$,
\begin{equation}\label{eq:remainder-envelope}
 \mathfrak r(y)\le\min\left\{\frac76y^{2/3},\
 \gamma y^{2/3}+\frac43\right\}=\mathcal R(y).
\end{equation}
\end{proposition}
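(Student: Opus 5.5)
The plan is to bound $\mathfrak r(y)$ by evaluating the quadratic-plus-defect cost at well-chosen $r$ of the form $r=2^a k$. The scale invariance $E(2^ar)=E(r)$ from \eqref{eq:E-basic} makes the defect of such an $r$ equal to $E(k)$, which is of size $k\approx 3y/(8\cdot2^a)$. The quadratic term, in turn, is controlled by the spacing $2^a$ of the multiples of $2^a$. Concretely, I write
\[
 \frac{3y}{8}=2^a(v+\theta),\qquad v\in\mathbb Z_{\ge0},\quad 0\le\theta<1,
\]
and use the two neighbouring candidates $r_-=2^av$ and $r_+=2^a(v+1)$. Their costs are
\[
 \Phi_-=\frac83 4^a\theta^2+E(v),\qquad
 \Phi_+=\frac83 4^a(1-\theta)^2+E(v+1).
\]
Before using these I must check admissibility, $r_+\le\lfloor y/2\rfloor$. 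This holds once $2^a\le y/8$, roughly, which is compatible with the eventual choice $2^a\asymp y^{1/3}$ for $y$ beyond an explicit threshold. The finitely many small $y$ are checked directly from \eqref{eq:remainder-def}, using $r=0$ (cost $3y^2/8$) or $r=1$.

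For the cruder bound $\frac76y^{2/3}$, I take only one candidate and use $E(v)\le\frac43v$ (with $E(0)=0$). Choosing the nearer of $r_\pm$ gives
\[
 \mathfrak r(y)\le\frac23 4^a+\frac43\Bigl(\frac{3y}{8\cdot2^a}+1\Bigr)
 \approx \frac23u^2+\frac{y}{2u},\qquad u=2^a.
\]
Optimizing $u$ over real values and then accounting for the restriction to powers of two should give a constant no worse than $7/6$. The loss from dyadic rounding is handled by comparing $u$ and $2u$ at the crossover point, and checking that the worse of the two ratios stays below $7/6$. The $+\frac43$ from the "$+1$" is absorbed, for large $y$, by the slack between the real optimum and $7/6$, and for small $y$ by the direct check.

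For the sharp envelope $\gamma y^{2/3}+\frac43$, I use Lemma~\ref{lem:E-correlation}. For any weight $\lambda\in[0,1]$ one has $\mathfrak r(y)\le\lambda\Phi_-+(1-\lambda)\Phi_+$. With $\lambda=2/3$, the defect part is at most $\frac13\bigl(2E(v)+E(v+1)\bigr)\le\frac89v+\frac49$, which removes the worst-case correlation of neighbouring defects. For $\theta$ near the endpoints, I instead use the nearer single candidate together with $E\le\frac43(\cdot)$. Near $\theta=1$, I may also re-expand at scale $2^{a+1}$ or $2^{a-1}$, since $E(2w)=E(w)$ turns one of $r_\pm$ into a candidate at the neighbouring dyadic scale. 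The bound thus becomes
\[
 \mathfrak r(y)\le\min_{\text{choices}}\Bigl\{c(\theta)\,4^a+\frac{y}{3\cdot2^{a}}\cdot(\text{const})+\frac43\Bigr\}.
\]
I then maximize over $\theta$ and the position of $y$ relative to the dyadic grid, and minimize over $a$. The $\frac43$ collects the additive constants $\frac49$ and $\frac43$. The target is to show that this min–max equals $\gamma=49\sqrt[3]3/72$, where the $\sqrt[3]{3}$ arises from the cube-root balance $u^3\propto 3y$.

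The main obstacle is this last step. I have to find the right piecewise choice of candidates and convex weights, depending on $\theta$ and on $\log_2$ of the optimal scale, so that the worst case yields exactly $\gamma$ rather than a larger constant. This requires a careful one-variable extremal computation over $\theta\in[0,1)$ and a dyadic-position parameter in $[1,2)$. I would first try the fixed weights $(2/3,1/3)$ and compute the resulting worst case. If that overshoots $\gamma$, I would add the single-candidate bounds at the endpoints and recompute the envelope.
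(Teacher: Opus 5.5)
Your toolkit is the right one: dyadic candidates with $E(2^ak)=E(k)$, plus Lemma~\ref{lem:E-correlation} for the sharp branch. But the proposal stops exactly where the proof lives, and the scheme you say you would try first does not reach $\gamma$. Use the paper's normalization: $q$ dyadic, $\lambda=y/q^3\in[16/9,128/9)$ (this $\lambda$ is not your convex weight), and $3y/(8q)=2v+u$ with $0\le u<2$. Take your scale $2^a=2q$, so $r_\pm=2qv,\,2q(v+1)$. In units of $q^2$ and up to $O(1)$ terms, your four bounds are:
\begin{itemize}
\item the $(2/3,1/3)$ combination, giving $\tfrac83\bigl(\tfrac23u^2+\tfrac13(2-u)^2\bigr)+\tfrac\lambda6$;
\item the two single candidates, giving $\tfrac83u^2+\tfrac\lambda4$ and $\tfrac83(2-u)^2+\tfrac\lambda4$;
\item the midpoint $q(2v+1)$ with $E\le\tfrac43(2v+1)$, giving $\tfrac83(u-1)^2+\tfrac\lambda2$.
\end{itemize}
At $\lambda=128/15$ the lower envelope of these four peaks at $u=11/10$. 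There three of them equal $322/75\approx4.29$, whereas $\gamma\lambda^{2/3}\approx4.10$. The excess is of order $q^2\asymp y^{2/3}$, so no additive constant absorbs it. Redoing the same bounds one dyadic scale up (neighbours $4qv'$, $4q(v'+1)$, midpoint $2q(2v'+1)$) gives the same worst value $322/75$ at the compatible phase $u'=11/20$.

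The missing idea is that the three candidates $2qv$, $q(2v+1)$, $2q(v+1)$ are all governed by one parameter $z=E(v)/(4v/3)$:
\begin{itemize}
\item $E(2qv)=E(v)$;
\item $E(q(2v+1))=E(v)+\tfrac43(v+1)$ exactly, by the odd recurrence;
\item $E(v+1)\le\min\{\tfrac43(v+1),\tfrac83v+\tfrac43-2E(v)\}$, by the lemma.
\end{itemize}
You must keep $z$ free and maximize the lower envelope of the three resulting parabolas over $(u,z)$. This gives the paper's $H(\lambda)$, equal to $P$ on $[16/9,64/9]$ and $V$ on $[64/9,128/9]$. At the point $u=11/10$ above, this correlated bound is only about $3.58$. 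The constant $\gamma$ then comes from the triple point $\lambda=64/9$, $u=5/6$, $z=1$, where all three costs equal $98/27$ and $(98/27)/(64/9)^{2/3}=\gamma$. You would still need to handle the finitely many $y<32$ where the three candidates are not all admissible.

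The crude branch has a separate error. Rounding to the nearer neighbour leaves a positive additive constant, and there is no slack to absorb it. Over dyadic $u$, the best value of $\tfrac23u^2+\tfrac{y}{2u}$ has worst ratio to $y^{2/3}$ exactly $\tfrac76$, attained at the crossover $y=8u^3$. So your argument only yields $\tfrac76y^{2/3}+O(1)$.

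The fix is to take the better of the two neighbours, i.e.\ round $3y/(8q)-1/(4q^2)$ rather than $3y/(8q)$. Keep the $-\tfrac43\theta$ coming from $v=3y/(8q)-\theta$ and complete the square. This gives $\tfrac{y}{2q}+\tfrac23q^2-\tfrac1{6q^2}$ with no positive constant. Then $3\zeta^3-7\zeta^2+4\le0$ for $\zeta=y^{1/3}/q\in[1,2)$ finishes the branch. Since $\gamma y^{2/3}+\tfrac43\le\tfrac76y^{2/3}$ for $y\ge20$, this branch is in any case only needed for small $y$.

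Finally, your small-$y$ check with $r\in\{0,1\}$ fails at $y=6$ and $y=7$, where $r=2$ is required.
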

\begin{proof}
For the first branch, assume $y\ge8$ and choose the largest dyadic integer $q$ satisfying $q^3\le y$. Let $k$ be a nearest integer to $3y/(8q)-1/(4q^2)$, with ties rounded upwards, and put $r=qk$. Since $q\le y/4$, this choice has $1\le r\le y/2$. Using $E(qk)=E(k)\le4k/3$ and completing the square gives
\begin{equation}\label{eq:dyadic-rounding}
 \frac83(r-3y/8)^2+E(r)
 \le\frac{y}{2q}+\frac23q^2-\frac1{6q^2}.
\end{equation}
With $z=y^{1/3}/q\in[1,2)$, the identity
\[
 3z^3-7z^2+4=(z-1)(z-2)(3z+2)\le0
\]
shows that the first two terms on the right are at most $7y^{2/3}/6$. For $y=1,\ldots,7$, take respectively $r=0,1,1,2,2,2,2$. The residuals are $3/8,3/2,11/8,2,11/8,3/2,19/8$, all below this branch.

For the second branch, choose the largest dyadic $q$ with $(16/9)q^3\le y$, and write
\[
 \lambda=y/q^3\in[16/9,128/9),\qquad
 \frac{3y}{8q}=2v+u,\quad0\le u<2.
\]
Initially suppose that $v\ge1$ and all three candidates
\[
 r_0=2qv,\quad r_1=q(2v+1),\quad r_2=q(2v+2)
\]
are feasible. Put $z=E(v)/(4v/3)\in[0,1]$. The recurrence and Lemma~\ref{lem:E-correlation} give
\begin{align*}
 E(r_0)&\le z\frac{y}{4q},\\
 E(r_1)&\le(z+1)\frac{y}{4q}+\frac43,\\
 E(r_2)&\le\min(1,2-2z)\frac{y}{4q}+\frac43.
\end{align*}
The best candidate therefore has residual at most $q^2$ times
\begin{equation}\label{eq:three-parabolas}
 \min\left\{\frac83u^2+\frac{z\lambda}{4},\
 \frac83(u-1)^2+\frac{(z+1)\lambda}{4},\
 \frac83(u-2)^2+\frac{\min(1,2-2z)\lambda}{4}\right\}
\end{equation}
plus $4/3$.

For $z\le1/2$, increasing $z$ to $1/2$ cannot decrease the minimum in \eqref{eq:three-parabolas}. It therefore suffices to consider $z\in[1/2,1]$. The adjacent intersections are
\[
 u_{01}=\frac12+\frac{3\lambda}{64},\qquad
 u_{12}=\frac32+\frac{3(1-3z)\lambda}{64}.
\]
The middle parabola is present in the lower envelope exactly when $z\lambda\le64/9$. Each piece is convex, so a maximum occurs at an endpoint or an intersection. These comparisons give the bound
\begin{equation}\label{eq:H-envelope}
 H(\lambda)=
 \begin{cases}
 P(\lambda)=\frac23+\frac{3\lambda}{8}+\frac{3\lambda^2}{512},
 &16/9\le\lambda\le64/9,\\[2pt]
 V(\lambda)=\frac{22}{9}+\frac\lambda8+\frac{3\lambda^2}{512},
 &64/9\le\lambda\le128/9.
 \end{cases}
\end{equation}
Indeed, in the first range the first intersection is largest at $z=1$ and equals $P$. The second intersection is convex in $z$, so it is enough to compare the endpoint values
\[
 P_{1/2}=\frac23+\frac{5\lambda}{16}+\frac{3\lambda^2}{2048},\qquad
 P_1=\frac23+\frac\lambda4+\frac{3\lambda^2}{128}.
\]
They are bounded by $P$ because
\[
 P-P_{1/2}=\frac{\lambda(128+9\lambda)}{2048},\qquad
 P-P_1=\frac{\lambda(64-9\lambda)}{512}.
\]
In the second range, split at $z_*=64/(9\lambda)$. In the three-parabola regime the relevant endpoint values are $V$ and $P_{1/2}$. In the two-parabola regime they are $V$ and $Q_1=8/3+\lambda/8+3\lambda^2/2048$. The comparisons are
\begin{align*}
 V-P_{1/2}&=\frac{(9\lambda-256)(9\lambda-128)}{18432}\ge0,\\
 V-Q_1&=\frac{(9\lambda-64)(9\lambda+64)}{18432}\ge0.
\end{align*}
Spatial endpoint values are at most $\lambda/4$, and
\[
 V-\lambda/4=\frac3{512}(\lambda-32/3)^2+\frac{16}{9}>0.
\]
This proves \eqref{eq:H-envelope}.

To normalize the bound, compute
\[
 \lambda P'-\frac23P=-\frac49+\frac\lambda8+\frac{\lambda^2}{128},\qquad
 \lambda V'-\frac23V=-\frac{44}{27}+\frac\lambda{24}+\frac{\lambda^2}{128}.
\]
Both expressions increase on the positive axis. Each normalized piece $H(\lambda)/\lambda^{2/3}$ has its maximum at an endpoint. The middle endpoint has value
\[
 \frac{98/27}{(64/9)^{2/3}}=\frac{49\sqrt[3]3}{72}=\gamma.
\]
The two outside endpoint values are equal, and their ratio to this value is $73\sqrt[3]2/98<1$. Thus $H(\lambda)\le\gamma\lambda^{2/3}$, and $q^2\lambda^{2/3}=y^{2/3}$ proves the desired estimate whenever the three candidates are feasible.

For $y\ge32$, $q\le y/16$, so they all lie in $[1,y/2]$. For $2\le y<32$, the only exceptional values, with suitable replacement candidates, are
\[
\begin{array}{c|rrrrrrrrrr}
 y&2&3&4&5&6&7&11&15&22&23\\
 r&1&1&2&2&2&2&4&6&8&8
\end{array}
\]
with residuals $3/2,11/8,2,11/8,3/2,19/8,11/8,35/8,3/2,19/8$. Each is below $\gamma y^{2/3}+4/3$, using $\gamma>3/4$ and elementary cubing. For $y=1$, the choice $r=0$ gives $\mathfrak r(1)=3/8$.
\end{proof}

\begin{corollary}\label{cor:upper}
For every $m\ge3$, $N_{2m+1}\le2\lfloor\mathcal U_m/2\rfloor$.
\end{corollary}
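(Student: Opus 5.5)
The plan is to combine the exact budget \eqref{eq:exact-budget} with the remainder envelope of Proposition~\ref{prop:remainder-envelope}, and then use the parity observation recorded after \eqref{eq:exact-budget}. Since $\mathfrak r$ is only bounded above by $\mathcal R$, every step is monotone, so nothing beyond substitution should be needed.

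First, for even $m$ we have $\epsilon=0$, so $m-2\epsilon=m$. Then \eqref{eq:exact-budget} and \eqref{eq:remainder-envelope} give
\[
 N_{2m+1}\le \tfrac{13}{8}m^2+m+2+\mathfrak r(m)\le \tfrac{13}{8}m^2+m+2+\mathcal R(m)=\mathcal U_m,
\]
which matches \eqref{eq:U-def} with $\epsilon=0$. For odd $m\ge3$ we have $\epsilon=1$, and $y=m-2\ge1$ is a positive integer, so Proposition~\ref{prop:remainder-envelope} applies. This gives
\[
 N_{2m+1}\le\tfrac{13}{8}m^2+\tfrac32m+\tfrac52+\mathcal R(m-2).
\]
That is exactly $\mathcal U_m$, since $(1+\tfrac12)m=\tfrac32m$ and $2+\tfrac12=\tfrac52$. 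We should check that the admissible ranges of $r$ used in \eqref{eq:even-budget} and \eqref{eq:odd-budget} match the minimization range in \eqref{eq:remainder-def}. In the even case, $x=m$ and $0\le r\le\lfloor x/2\rfloor$. In the odd case, \eqref{eq:odd-budget} was derived for $0\le r\le(m-3)/2$, while \eqref{eq:remainder-def} with $y=m-2$ allows $r\le\lfloor(m-2)/2\rfloor=(m-3)/2$. The ranges agree. The rewriting of \eqref{eq:odd-budget} as $\tfrac{13}{8}m^2+\tfrac32m+\tfrac52+\tfrac83(r-3(m-2)/8)^2+E(r)$ is an algebraic identity that I would verify by expanding the square.

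Finally, the construction behind \eqref{eq:exact-budget} is antipodal: the poles, the antipodally paired latitudes, and the antipodal equatorial measure. Its node count $N^\ast$ is therefore even, and it satisfies $N^\ast\le\mathcal U_m$. An even integer not exceeding a real number $\mathcal U_m$ is at most $2\lfloor\mathcal U_m/2\rfloor$, which gives $N_{2m+1}\le N^\ast\le2\lfloor\mathcal U_m/2\rfloor$.

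The one step needing care is that the count is genuinely even. The atom bounds from Lemma~\ref{lem:toeplitz-atoms} are upper bounds, so I would apply the rounding to the actual constructed node count rather than to the budget. The actual count is even, because every nonpolar node has a distinct antipode: the equatorial measure is antipodal via $\psi=2\phi$, and the poles form a pair. The actual count is also at most the budget, and hence at most $\mathcal U_m$, so the rounding holds.
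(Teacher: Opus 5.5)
Your proposal is correct and follows the paper's own argument: substitute the envelope $\mathfrak r\le\mathcal R$ of Proposition~\ref{prop:remainder-envelope} into \eqref{eq:exact-budget}, then round down using the even cardinality of the constructed antipodal formula. Your additional checks are all valid and make explicit what the paper's one-line proof leaves implicit: the $r$-ranges match, the rewriting of \eqref{eq:odd-budget} is an exact identity, and the parity is applied to the actual node count rather than to the budget.
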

\begin{proof}
Insert Proposition~\ref{prop:remainder-envelope} in \eqref{eq:exact-budget} and use the even cardinality of the constructed antipodal formula.
\end{proof}

\begin{theorem}\label{thm:sharp-remainder}
For the scalar budget \eqref{eq:remainder-def}, both parity subsequences satisfy
\begin{equation}\label{eq:sharp-remainder}
 \limsup_{\substack{y\to\infty\\y\ \mathrm{even}}}
 \frac{\mathfrak r(y)}{y^{2/3}}
 =\limsup_{\substack{y\to\infty\\y\ \mathrm{odd}}}
 \frac{\mathfrak r(y)}{y^{2/3}}=\gamma.
\end{equation}
\end{theorem}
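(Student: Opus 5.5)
The upper half of \eqref{eq:sharp-remainder} is already contained in Proposition~\ref{prop:remainder-envelope}: $\mathfrak r(y)\le\gamma y^{2/3}+\frac43$ for every $y$, so both parity $\limsup$'s are at most $\gamma$. The real work is the matching lower bound. For this I will exhibit, along each parity class, a sequence $y\to\infty$ with $\mathfrak r(y)\ge(\gamma-o(1))y^{2/3}$. The sequence is read off from the equality case in the proof of Proposition~\ref{prop:remainder-envelope}. The maximum of $H(\lambda)/\lambda^{2/3}$ occurs at $\lambda=64/9$ with $z=1$ and $u=u_{01}=\frac12+\frac{3\lambda}{64}=\frac56$.

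The value $z=1$ means $E(v)=\frac43v$. From \eqref{eq:E-recursion}, this holds exactly for $v=2^b-1$. So I take $q=2^a$, $v=2^b-1$ with $b\to\infty$ and $a\approx b$ tuned so that $y/q^3\to64/9$, and define $y$ as the nearest integer of the prescribed parity to
\[
 \tfrac83q\bigl(2v+\tfrac56\bigr).
\]
In other words, $3y/(8q)=2v+u$ with $u\to\frac56$ and $\lambda=y/q^3\to64/9$. The integer rounding and the parity adjustment shift $3y/8$ by $O(1)$, which changes each candidate cost by $O(q)=o(q^2)$. Since $y^{2/3}\asymp q^2$, this error is negligible, and both parity classes are handled at once.

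Next I need an exact lower bound on $E$. Unwinding \eqref{eq:E-recursion} over the binary digits of $r$ gives
\[
 E(r)=\tfrac43\sum(\lfloor r/2^{k+1}\rfloor+1),
\]
summed over the set bits $k$ of $r$. In particular, writing $r=2^k s$ with $s$ odd,
\[
 E(r)\ge\tfrac43\cdot\tfrac{s+1}{2},
\]
and keeping more bits gives sharper bounds. For every $0\le r\le\lfloor y/2\rfloor$ I then bound
\[
 \tfrac83(r-3y/8)^2+E(r)
\]
from below, splitting according to the $2$-adic valuation of $r$.

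\emph{Case 1: $2^{v_2(r)}\le q/2$.} Then $\operatorname{odd}(r)\ge r/(q/2)$, so whenever $r\ge y/8$ we get $E(r)\gtrsim\frac{4y}{3q}\cdot\frac38$. This is about $\frac{y}{2q}\sim\frac{32}{9}q^2>\gamma\lambda^{2/3}q^2$. When $r<y/8$, the quadratic term alone exceeds $y^{2/3}$ by far.

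\emph{Case 2: $2^{v_2(r)}\ge2q$.} Then $r$ lies on the lattice $2q\mathbb Z$. The nearest such points differ from $3y/8=q(2v+\frac56)$ by at least $\frac56q$ or $\frac76q$, each of order $q$. I will check that the parabola values here, combined with the nonnegative $E$ term for the far point, beat $\gamma\lambda^{2/3}q^2$. A crude check with $E\ge0$ may fail. In that case I use $r=2qv'$ with $E(r)=E(v')$ and the bit formula: moving $v'$ away from $2^b-1$ to $2^b$ gives $E=\frac43$ but a distance of $\frac76q$, while staying at $v'=2^b-1$ keeps $E$ large.

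\emph{Case 3: $2^{v_2(r)}=q$.} Then $r=q(2j+1)$, and the nearby candidates are $r_1=q(2v+1)$ and the neighbours $q(2v\pm1)$ with $E=E(v)+\frac43(v+1)$ or $E(v-1)+\frac43v$. Here one recomputes the three-parabola envelope \eqref{eq:three-parabolas} with equalities in place of inequalities.

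The main obstacle is making the lower bound hold uniformly over all $r$, not just the three candidates $r_0,r_1,r_2$. Candidates at distance $\Theta(q)$ with a different dyadic valuation, such as $r=q'k$ for $q'=q/2$ or $2q$, must be shown no better. The values of $E$ at nearby points with many trailing zeros, for instance $q\cdot2^b\cdot2$, are small: $E(2^{b+1}q)=\frac43$. So the whole argument rests on the quadratic penalty $\frac83(\frac76q)^2=\frac{98}{27}q^2$ for such points. This equals the envelope value $P(64/9)q^2$, so the bound is exactly tight there, consistent with $\gamma$ being the sharp constant.

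I will therefore carry out the case analysis in normalized units $r=q\rho$. I will show that every admissible $\rho$ gives cost at least $(\frac{98}{27}-o(1))q^2$, using the bit formula for $E$ to exclude points where $\rho$ has few trailing binary zeros and is close to $2v+\frac56$. Dividing by $y^{2/3}=\lambda^{2/3}q^2\to(64/9)^{2/3}q^2$ then yields $\gamma$.
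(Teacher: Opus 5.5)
\textbf{There is a genuine gap: your extremal sequence cannot reach $\lambda=64/9$.}
Your upper half is fine. So is your handling of rounding and parity, since an $O(1)$ shift of $3y/8$ changes costs by only $O(q)=o(q^2)$.

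With $q=2^a$ and $v=2^b-1$ you get $y/q^3=8(2v+u)/(3q^2)\to\frac{16}{3}\,2^{b-2a}$. The equality configuration needs $\lambda=64/9$, that is, $v/q^2\to\frac43$. But $(2^b-1)/2^{2a}$ tends to a power of two, never to $\frac43$, so no tuning of $a,b$ works.

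This failure is substantive, not technical. For $b=2a$ one has $\lambda\to16/3$. There the feasible candidate $r=q(2v+1)$, with $E(2v+1)=\frac43(2q^2-1)$, already costs $(\frac{74}{27}+o(1))q^2$. Hence $\mathfrak r(y)/y^{2/3}\lesssim0.90<\gamma\approx0.98$ along that sequence. In general, the envelope $H$ from the proof of Proposition~\ref{prop:remainder-envelope}, evaluated at the normalized values $8/3$, $16/3$, $32/3$, bounds the $\limsup$ along any sequence of your form by about $0.93$.

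\textbf{The missing idea: you need only $z\to1$, not $E(v)=\frac43v$ exactly.} Take $v=qA-1$ with an integer $A$ satisfying $A/q\to\frac43$. The paper uses $Q=64^j$, $A_Q=(4Q-1)/3$, $v_Q=QA_Q-1$. Removing the $\log_2q$ trailing binary ones gives $D(qA-1)=A(q-1)+D(A-1)$ for $D=3E/4$. Hence $E(v)=\frac43v\,(1+O(1/q))$, while $E(v+1)=E(qA)=E(A)=O(q)=o(q^2)$. Only the lowest $\log_2q$ bits of $v$, roughly half of them, must be ones. The high part is free to fix the ratio $v/q^2$. With this choice the three candidates $2qv$, $q(2v+1)$, $q(2v+2)$ all cost $(\frac{98}{27}+o(1))q^2$.

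\textbf{Your Case~1 also rests on a false inequality.} You group all $r$ with $2^{v_2(r)}\le q/2$ and claim $E(r)\gtrsim y/(2q)\sim\frac{32}{9}q^2>\gamma\lambda^{2/3}q^2$. But $\gamma(64/9)^{2/3}=\frac{98}{27}>\frac{96}{27}=\frac{32}{9}$. You must treat valuation exactly $q/2$ separately from smaller valuations.
\begin{itemize}
\item If $2^{v_2(r)}\le q/4$, the odd part of $r$ is at least $4r/q$, so $E(r)\gtrsim\frac{64}{9}q^2$.
\item If $2^{v_2(r)}=q/2$, then $r/q\in\frac12+\mathbb Z$, which lies at distance at least $\frac13$ from the limiting phase $2v+\frac56$. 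The quadratic term adds $\frac{8}{27}q^2$, for a total of at least $\frac{104}{27}q^2$.
\end{itemize}

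For multiples of $q$, first localize the minimizer within $O(q)$ of $3y/8$; outside that window the quadratic term alone exceeds $\frac{98}{27}q^2$. Writing $r=q(2v+l)$, only $l=0,1,2$ can compete. For these, the exact asymptotics of $E(v)$, $E(2v+1)$ and $E(v+1)$ give the value $\frac{98}{27}$. Your Cases~2 and~3 leave exactly this verification as ``I will check''.
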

\begin{proof}
The upper bound follows from Proposition~\ref{prop:remainder-envelope}. To attain it asymptotically, let $Q=64^j$ and define
\[
 A_Q=\frac{4Q-1}{3},\qquad v_Q=QA_Q-1,\qquad
 y_Q=\frac{16Qv_Q}{3}+\frac{20Q-2}{9}.
\]
Since $Q\equiv1\pmod9$, $v_Q$ is divisible by three and $y_Q$ is a positive even integer. Moreover
\begin{equation}\label{eq:sharp-sequence}
 \frac{y_Q}{Q^3}\longrightarrow\frac{64}{9},\qquad
 t_Q=\frac{3y_Q}{8}=2Qv_Q+\frac{5Q}{6}-\frac1{12}.
\end{equation}
For $D(r)=3E(r)/4$, repeatedly removing trailing binary ones gives
\[
 D(QA-1)=A(Q-1)+D(A-1).
\]
Together with $0\le D(A-1)\le A-1$, this implies
\[
 \frac{E(v_Q)}{Q^2}\to\frac{16}{9},\qquad
 \frac{E(v_Q+1)}{Q^2}\to0,\qquad
 \frac{E(2v_Q+1)}{Q^2}\to\frac{32}{9}.
\]
The three feasible candidates $2Qv_Q$, $Q(2v_Q+1)$, and $Q(2v_Q+2)$ have residuals divided by $Q^2$ tending respectively to
\[
 \frac83\left(\frac56\right)^2+\frac{16}{9},\qquad
 \frac83\left(\frac16\right)^2+\frac{32}{9},\qquad
 \frac83\left(\frac76\right)^2,
\]
all equal to $98/27$.

It remains to exclude better integers. Every minimizing $r_Q$ lies within $2Q$ of $t_Q$ for large $Q$, since otherwise the quadratic term alone exceeds these candidate values. Uniformly in that interval, $r_Q/Q^3\to8/3$. For odd $w\ge3$, the recurrence yields $E(w)\ge2(w+1)/3$.

If $Q$ divides $r_Q$, write $r_Q=Q(2v_Q+l)$. Only finitely many integers $l$ are relevant. Apart from $l=0,1,2$, the distance term alone has a limit larger than $98/27$; for example $l=-1$ gives $242/27$. The remaining three cases were just evaluated. If $Q/2$ divides $r_Q$ but $Q$ does not, its odd part is $2r_Q/Q$, so $\liminf E(r_Q)/Q^2\ge32/9$. The half-integer lattice is at distance at least $1/3$ from the limiting phase $5/6$, giving total limit inferior at least $104/27$. Finally, if $Q/2$ does not divide $r_Q$, its maximal dyadic factor is at most $Q/4$, and the odd-part bound gives $\liminf E(r_Q)/Q^2\ge64/9$.

Thus $\mathfrak r(y_Q)/Q^2\to98/27$. Dividing by the limit in \eqref{eq:sharp-sequence} proves the even subsequence assertion. Replacing $y_Q$ by $y_Q+1$ changes the center by only $3/8$, so the same $Q$-scale argument proves the odd subsequence assertion.
\end{proof}

The quadratic coefficient is obtained by minimizing $2-2\rho+\frac83\rho^2$ at $\rho=3/8$. Theorem~\ref{thm:sharp-remainder} concerns the remainder of this scalar budget. That budget charges the largest permitted number of active pairs at each stage, and those charges need not be attained simultaneously by a cubature formula. Its sharpness is therefore not a lower bound for $N_{2m+1}$.

\section{Weighted LP--Tur\'an bounds and cap transfer}\label{sec:cap-method}
We apply the LP and distance-graph estimates from Subsection~\ref{subsec:lp-turan-preliminaries}, with the radial conventions of Subsection~\ref{subsec:radial-preliminaries}. Yudin's cap gives the classical bound. Retaining its positive off-diagonal contributions leads to the continuous LP--Tur\'an inequality, the planar cap optimization, and the finite-degree transfer.

\subsection{The classical cap bound}\label{sec:yudin}
Yudin's cap gives the classical lower bound for spherical designs~\cite{Yudin}. Positive-weight spherical quadrature regularity and related cap estimates were developed by Reimer and Leopardi~\cite{ReimerHyper,Leopardi}. Applied to Yudin's cap kernel, Lemma~\ref{lem:weighted-lp} gives a short weighted derivation in the present normalization and excludes the Fisher value in the remaining degrees $2m+1$ with $m\ge5$.

\begin{proposition}\label{prop:yudin}
For $m\ge1$, with $a_m$ as in Theorem~\ref{thm:uniform-main},
\begin{equation}\label{eq:yudin}
 N_{2m+1}\ge\frac2{1-a_m}>
 \frac{160m^2+400m+220}{147}.
\end{equation}
In particular $N_{2m+1}\ge F_m+2$ for all $m\ge3$.
\end{proposition}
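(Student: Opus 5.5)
We apply Lemma~\ref{lem:weighted-lp} to Yudin's cap kernel. With $t=2m+1$, let $a=a_m$ be the largest zero of $P'_{2m+2}$ and set
\[
 G(u)=(u-a)\left(\frac{P'_{2m+2}(u)}{u-a}\right)^2\cdot(1+u)\quad\text{or more simply}\quad G(u)=\frac{(1+u)\,P'_{2m+2}(u)^2}{u-a}\ \ (u\ne a),
\]
a polynomial of degree $4m+2$. It is nonnegative exactly on $[a,1]$ up to sign; to obtain a function that is nonnegative on all of $[-1,1]$, we take the Levenshtein/Yudin modification $G=\max(F,0)$, or better the extremal ``cap'' function $G=\ind_{[a,1]}\cdot F$ with $F(u)=(u-a)^{-1}(1-u^2)P'_{2m+2}(u)^2$. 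Lemma~\ref{lem:weighted-lp} only needs $G\ge0$ with $G_\ell\le0$ for $\ell>t$, $G_0>0$, $G(1)>0$. The cleanest route is a two-step argument rather than a single kernel. First, apply exactness directly to a polynomial $g$ of degree $\le 2m+1$ with $g\le \ind_{[a,1]}$ on $[-1,1]$ and $\int g\,d\sigma$ as large as possible. Evaluating at $u=x_i\cdot x_j$ about each node and summing with weights $w_j$ gives $w$-mass of the cap of radius $\arccos a$ about $x_i$ bounded below, which yields $\sum_j w_j\ind_{x_i\cdot x_j\ge a}\ge\int g\,d\sigma$. In Yudin's normalization, however, the needed statement is the dual: the cap about each node contains only that node's weight... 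Instead I would use the annulus form with Lemma~\ref{lem:weighted-lp} and the kernel $G(u)=\bigl((1+u)/(1-a)\bigr)\bigl(P_{m+1}^{(\ast)}\bigr)^2$-type polynomial vanishing at the Lobatto nodes.

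Concretely, take $G(u)=(1+u)\,\bigl(\tfrac{P'_{2m+2}(u)}{u-a}\bigr)^2\cdot$ replaced by the Lobatto kernel $g(u)=\frac{(1+u)\,\Phi(u)^2}{\Phi(1)^2\,2}$ with $\Phi(u)=P'_{2m+2}(u)/(u-a)$ restricted via its even/odd part of degree $\le m$, so that $\deg g\le 2m+1$. Then $g\ge0$ on $[-1,1]$, $g(1)=1$, and $G_\ell=0$ for $\ell>t$, so Lemma~\ref{lem:weighted-lp} gives $N\ge g(1)/g_0=1/g_0$. Computing $g_0=\frac12\int_{-1}^1 g$ exactly by the Lobatto rule of Lemma~\ref{lem:lobatto} in degree $4m+3$ (nodes $-1$, the zeros of $P'_{2m+2}$, and $1$): all interior node contributions vanish except possibly at $u=a$ and at $u=1$, and the factor $(1+u)$ kills $u=-1$. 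The expected outcome is $g_0=(1-a_m)/2$, giving $N_{2m+1}\ge 2/(1-a_m)$. Checking that $g$ indeed has the claimed degree and that the Lobatto evaluation collapses to $(1-a)/2$ is the main technical step; I would first verify it for $m=1$ where $a_1=1/\sqrt5$ yields $N_3\ge 2/(1-1/\sqrt5)\approx 3.6$, consistent.

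For the numerical inequality, use a lower bound on $1-a_m$ in terms of the first zero of $J_1$: the standard estimate $\arccos a_m\le j_{1,1}/(2m+2+\tfrac12)$-type (Bruns/Szeg\H{o} interlacing for zeros of $P'_n$, i.e.\ of the Jacobi polynomial $P^{(1,1)}_{n-1}$), with $j_{1,1}^2<14.7$, gives $1-a_m\le \tfrac12\theta^2\le \tfrac{147}{10}\cdot\frac{1}{(2m+\frac52)^2}\cdot\frac12$ up to constants, yielding $2/(1-a_m)>\frac{160m^2+400m+220}{147}$ after expanding $(4m+5)^2\cdot 10/147$. Finally compare with $F_m+2=m^2+3m+4$: the quadratic $\frac{160}{147}m^2+\dots$ exceeds it for $m\ge5$, while $m=3,4$ are already covered by Proposition~\ref{prop:low-lower}, and the remaining cases follow by direct evaluation of $a_m$. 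The zero bound on $a_m$ is where I expect the main obstacle.
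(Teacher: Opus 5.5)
\textbf{The first inequality cannot be proved with a polynomial kernel.} Your plan uses a kernel $g$ that is a polynomial of degree at most $2m+1$, and no such kernel can give $2/(1-a_m)$. Suppose $g\ge0$ on $[-1,1]$ and $\deg g\le2m+1$. The $(m+2)$-point rule of Lemma~\ref{lem:lobatto} is exact for $g$, has positive weights, and has weight $1/((m+1)(m+2))$ at $z=1$. Hence $g_0=\frac12\int_{-1}^1g\ge g(1)/F_m$. So Lemma~\ref{lem:weighted-lp} applied to any such kernel never gives more than the Fisher bound $F_m$. But $2/(1-a_m)>F_m$ for every $m\ge5$ by the very inequality you are proving, and already $2/(1-a_4)\approx30.3>30$. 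So no polynomial $g$ of this degree can produce $g_0=(1-a_m)/2$.

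Your specific construction is also inconsistent. $\Phi=P'_{2m+2}/(u-a)$ has degree $2m$, so $(1+u)\Phi^2$ has degree $4m+1$. A nonnegative polynomial of degree at most $2m+1$ can vanish at no more than $m$ interior points. The collapse of the $(2m+3)$-point Lobatto sum that you describe would need zeros at all $2m$ interior nodes other than $a$.

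\textbf{The missing idea: Yudin's non-polynomial kernel.} The kernel's Legendre coefficients beyond $t$ must be nonpositive rather than zero. The paper takes $n=2m+2$, $b=-P_n(a)>0$, $f=(P_n+b)\ind_{[a,1]}\ge0$, $g=\ind_{[a,1]}$, and $G=f*g$. Because $a$ is a critical point of $P_n$, $f(a)=f'(a)=0$. The Legendre equation then gives $(\lambda_n-\lambda_\ell)f_\ell=\lambda_nb\,g_\ell$ with no boundary terms. Thus $g_n=0$ and
\[
 G_\ell=\frac{f_\ell g_\ell}{2\ell+1}=\frac{\lambda_nb\,g_\ell^2}{(2\ell+1)(\lambda_n-\lambda_\ell)}\le0\quad(\ell>n),\qquad G_n=0.
\]
Moreover $G(1)=f_0$ and $G_0=f_0g_0$ with $g_0=(1-a)/2$. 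The value $2/(1-a_m)$ is therefore the reciprocal of the cap measure, not the output of a quadrature evaluation.

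\textbf{The zero estimate goes the wrong way.} Put $\kappa=2m+5/2$. The function $\nu(\theta)=(\sin\theta)^{3/2}P'_n(\cos\theta)$ solves $\nu''+(\kappa^2-\frac34\csc^2\theta)\nu=0$. Since $\csc^2\theta>\theta^{-2}$, Sturm comparison with $\sqrt\theta J_1(\kappa\theta)$ gives $\arccos a_m>j_{1,1}/\kappa$, the reverse of what you assert. For example, $a_1=\sqrt{3/7}$ (your $1/\sqrt5$ is the zero of $P_3'$), and $\arccos\sqrt{3/7}\approx0.857>j_{1,1}/4.5\approx0.851$.

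The usable upper bound comes from $\csc^2\theta\le1+\theta^{-2}$ on $(0,\pi/2)$ and comparison with $k^2=\kappa^2-3/4$. A Wronskian argument handles the singular endpoint. This gives $\theta_m\le R_0/\sqrt{\kappa^2-3/4}$ and $2/(1-a_m)\ge4(\kappa^2-3/4)/R_0^2$. Together with $R_0^2<147/10$, checked from the Bessel series, this is exactly where the constant $220$ comes from. Your expansion $10(4m+5)^2/147$ would give $250$ instead.

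\textbf{The final comparison needs Theorem~\ref{thm:gap}.} The rational expression exceeds $F_m$ for $m\ge5$; the numerator of the difference is $13m^2-41m-74$. It does not exceed $F_m+2$ for $m\le7$. At $m=5$, even the exact value $2/(1-a_5)\approx42.8$ gives only $N_{11}\ge43=F_5+1$, so direct evaluation of $a_m$ cannot close that case. Once $N>F_m$ is known, you must invoke Theorem~\ref{thm:gap} to exclude $F_m+1$, as the paper does. The cases $m=3,4$ come from Proposition~\ref{prop:low-lower}.
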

\begin{proof}
Put $n=2m+2$, $a=a_m$, and $b=-P_n(a)>0$. Define
\[
 f(u)=(P_n(u)+b)\ind_{[a,1]}(u),\qquad g(u)=\ind_{[a,1]}(u).
\]
Both functions are nonnegative. The rightmost critical point is a minimum, and $P_n$ is increasing from $a$ to $1$. Since $f(a)=f'(a)=0$, its zero extension introduces no boundary term in the Legendre equation. For $\lambda_\ell=\ell(\ell+1)$ and Legendre coefficients $f_\ell,g_\ell$,
\[
 (\lambda_n-\lambda_\ell)f_\ell=\lambda_n b g_\ell.
\]
By \eqref{eq:zonal-convolution}, $G=f*g$ is nonnegative and has coefficients $f_\ell g_\ell/(2\ell+1)$, nonpositive for $\ell>n$ and zero at $\ell=n$. Absolute uniform convergence follows from $f,g\in L^2$, as in Subsection~\ref{subsec:lp-turan-preliminaries}. Moreover $G(1)=f_0$ and $G_0=f_0g_0$, where $g_0=(1-a)/2$. Lemma~\ref{lem:weighted-lp} proves the first inequality.

To estimate $a$, set $\kappa=2m+5/2$, $\theta_m=\arccos a_m$, and
\[
 \nu(\theta)=(\sin\theta)^{3/2}P'_n(\cos\theta).
\]
The transformed equation is
\[
 \nu''+\left(\kappa^2-\frac3{4\sin^2\theta}\right)\nu=0.
\]
Compare its regular solution at zero with $v(\theta)=\sqrt\theta J_1(k\theta)$, where $k^2=\kappa^2-3/4$. On $(0,\pi/2)$, $\csc^2\theta\le1+\theta^{-2}$, so Sturm comparison yields
\[
 \theta_m\le\frac{R_0}{\sqrt{\kappa^2-3/4}},\qquad
 \frac2{1-a_m}\ge\frac{4(\kappa^2-3/4)}{R_0^2}.
\]
To justify comparison at the singular endpoint, match the positive $\theta^{3/2}$ leading terms. The Wronskian $\nu'v-\nu v'$ tends to zero at the origin and has nonpositive derivative while both solutions are positive. If $\nu$ had no zero by the first zero of $v$, the nonincreasing ratio $\nu/v$ would tend to positive infinity, a contradiction. The estimate below places the comparison zero in $(0,\pi/2)$ for every $m\ge1$.

At $x=\sqrt{147/10}$ the Bessel series gives
\[
 \frac{2J_1(x)}x
 <\sum_{r=0}^6\frac{(-147/40)^r}{r!(r+1)!}
 =-\frac{11838750593}{26214400000000}<0.
\]
The remaining alternating tail starts with a negative term and has decreasing absolute values. Since $J_1$ is positive just to the right of zero, $R_0^2<147/10<16$. Substitution proves the strict rational estimate in \eqref{eq:yudin}. For a direct integer bound, the strict inequality gives $N_{2m+1}\ge\lfloor(160m^2+400m+220)/147\rfloor+1$.

For $m\ge5$ the rational expression is larger than $F_m$, because the numerator of its difference from $F_m$ is $13m^2-41m-74$, positive at $m=5$ and increasing thereafter. The cases $m=3,4$ were excluded in Proposition~\ref{prop:low-lower}. Combining with Theorem~\ref{thm:gap} proves \eqref{eq:Fishergap}.
\end{proof}

Using $P'_{2m+2}=(2m+3)P_{2m+1}^{(1,1)}/2$, the Mehler--Heine formula for Jacobi polynomials~\cite{DLMF} gives
\[
 1-a_m\sim\frac{R_0^2}{2(2m+1)^2},\qquad
 \liminf_{m\to\infty}\frac{N_{2m+1}}{m^2}\ge\frac{16}{R_0^2}.
\]

\subsection{Continuous weighted LP--Tur\'an inequalities}\label{sec:lp-turan}
The diagonal estimate of Lemma~\ref{lem:weighted-lp} discards positive off-diagonal values of the kernel. We retain them by applying \eqref{eq:threshold-integral} to a nonincreasing lower envelope. Throughout this subsection, $F$, $f=1/F$, and $\kappa_0$ are the packing quantities defined in Subsection~\ref{subsec:lp-turan-preliminaries}; the kernel itself need not be monotone.

\begin{proposition}\label{prop:continuous-sphere}
Let $G$ satisfy the hypotheses of Lemma~\ref{lem:weighted-lp}, with angular support in $[0,\Theta]$, where $\Theta\le1/2$. Put $g_0=G_0$ and $A_0=G(1)$. Suppose $\ell$ is continuous, nonnegative and nonincreasing, satisfies $\ell(\theta)\le G(\cos\theta)$, $\ell(0)=c<A_0$, and $\ell(\Theta)=0$. Then
\begin{equation}\label{eq:continuous-sphere}
 N_t\ge\frac{A_0-c}{g_0-\int_0^\Theta f'(\theta)\ell(\theta)\dd\theta},
\end{equation}
and its denominator is at least $(1-\kappa_0)g_0>0$.
\end{proposition}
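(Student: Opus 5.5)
We start from the energy chain \eqref{eq:lp-energy} of Lemma~\ref{lem:weighted-lp} and keep the off-diagonal contribution instead of discarding it. For a positive degree-$t$ formula $\nu=\sum_iw_i\delta_{x_i}$ this chain gives
\[
 \sum_{i,j}w_iw_jG(x_i\cdot x_j)\le G_0=g_0.
\]
Because $G\ge0$ and $\ell(\theta)\le G(\cos\theta)$, each term satisfies $G(x_i\cdot x_j)\ge\ell(\theta_{ij})$. On the diagonal we gain the extra amount $A_0-c$, since $G(1)=A_0$ while $\ell(0)=c$. We therefore split the double sum as
\[
 \sum_{i,j}w_iw_jG(x_i\cdot x_j)\ge(A_0-c)\sum_iw_i^2+\sum_{i,j}w_iw_j\ell(\theta_{ij}),
\]
where the diagonal terms are included in the second sum with value $\ell(0)=c$. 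For $\theta_{ij}>\Theta$ the extension of $\ell$ by zero is harmless, because $G\ge0$ there (in fact $G=0$ by the angular support assumption).

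Next we bound the second sum from below by the threshold integral \eqref{eq:threshold-integral}. This is legitimate because $\ell$ is continuous, nonnegative and nonincreasing on $[0,\Theta]$ with $\ell(\Theta)=0$, and $\Theta\le1/2$ is exactly the range where the packing bound \eqref{eq:distance-mass} holds. The result is
\[
 \sum_{i,j}w_iw_j\ell(\theta_{ij})\ge\int_0^\Theta f'(\theta)\ell(\theta)\dd\theta=:I.
\]
Combining the two displays gives $(A_0-c)\sum_iw_i^2\le g_0-I$. Using $\sum_iw_i^2\ge1/N$ and $A_0-c>0$, we conclude $N\ge(A_0-c)/(g_0-I)$, which is \eqref{eq:continuous-sphere}, provided the denominator is positive.

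The denominator bound is the only step needing care. From \eqref{eq:packing-derivative} we have $f'(\theta)\le\tfrac{\kappa_0}{2}\sin\theta$ on $[0,1/2]$, hence
\[
 I\le\frac{\kappa_0}{2}\int_0^\Theta\ell(\theta)\sin\theta\dd\theta .
\]
In the normalization $d\sigma$ of zonal integration, $\frac12\int_0^\pi G(\cos\theta)\sin\theta\dd\theta=\int_{\Sph}G\dd\sigma=G_0=g_0$. Since $0\le\ell(\theta)\le G(\cos\theta)$ and $G\ge0$, it follows that $\frac12\int_0^\Theta\ell\sin\theta\dd\theta\le g_0$, so $I\le\kappa_0g_0$. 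This yields $g_0-I\ge(1-\kappa_0)g_0>0$, using $\kappa_0<1$ and $g_0>0$.

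The main point to verify is that the constant in \eqref{eq:packing-derivative} matches the measure normalization $\tfrac12\sin\theta\dd\theta$. It does: the factor $2f'/\sin\theta$ there is stated exactly in this normalization. The rest is bookkeeping of the diagonal terms.
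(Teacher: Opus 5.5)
Your proof is correct and matches the paper's argument: you retain the diagonal surplus $(A_0-c)\sum_i w_i^2$ in the energy chain \eqref{eq:lp-energy}, and you bound the remaining envelope sum below by the threshold integral \eqref{eq:threshold-integral}. The denominator bound then follows from $f'(\theta)\le\frac{\kappa_0}{2}\sin\theta$, $0\le\ell\le G$ and $G_0=\frac12\int_0^\pi G(\cos\theta)\sin\theta\,d\theta$, exactly as in the paper (which only writes the threshold term as $\int_0^\Theta f\,(-d\ell)$ before integrating by parts).
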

\begin{proof}
Apply \eqref{eq:threshold-integral} to the distance graphs of the cubature nodes. Retaining the diagonal difference $A_0-c$ separately and using the spectral upper estimate \eqref{eq:lp-energy} gives
\[
 g_0\ge\sum_{i,j}w_iw_jG(x_i\cdot x_j)
 \ge(A_0-c)\sum_iw_i^2+\int_0^\Theta f(\theta)(-d\ell(\theta)).
\]
The last integral equals $\int f'\ell$ by integration by parts. Since $\sum_iw_i^2\ge1/N$, this proves the ratio once the denominator is positive. Independently, \eqref{eq:packing-derivative} and $\ell\le G$ imply
\[
 \int_0^\Theta f'\ell\le\kappa_0\frac12
 \int_0^\Theta G(\cos\theta)\sin\theta\dd\theta=\kappa_0g_0.
\]
This establishes positivity without relying on the ratio itself.
\end{proof}

For finitely many thresholds $\theta_1<\cdots<\theta_J$ and decreasing levels $g_1\ge\cdots\ge g_J\ge g_{J+1}=0$, the same argument gives
\begin{equation}\label{eq:finite-threshold}
 N_t\ge\frac{G(1)-g_1}
 {G_0-\sum_{j=1}^J(g_j-g_{j+1})/F(\theta_j)}.
\end{equation}
The continuous version retains all admissible levels instead of selecting a finite staircase.

\subsection{The planar functional and complete height optimization}
Use the planar conventions of Subsection~\ref{subsec:radial-preliminaries}. Let $p\ge0$ be a radial cap supported in $\overline D_R\subset\R^2$ with positive companion $H=(1+\Delta)p$, a finite measure. Assume $p\in H^1$ and write
\[
 P=\int p=H(\R^2)>0,\qquad K=p*H,\quad a=K(0),\quad S=2R.
\]
Assume $K$ is continuous and nonnegative, with $a>0$. The normalized class in Subsection~\ref{sec:caps} satisfies the first two properties; the case $a=0$ will be assigned coefficient zero. Define
\begin{equation}\label{eq:profile-def}
 \rho(s)=\min_{0\le u\le s}K(u),\qquad
 d(s)=\frac{\sqrt3}{8\pi}s^2,\qquad
 \ell_c(s)=\min\{c,\rho(s)\},\quad0\le c\le a,
\end{equation}
and
\begin{equation}\label{eq:planar-functional}
 M_0=\frac{P^2}{4\pi},\quad
 J(c)=\int_0^S d'(s)\ell_c(s)\dd s,\quad
 D(c)=M_0-J(c),\quad
 C(p)=\max_{0\le c\le a}\frac{4(a-c)}{D(c)}.
\end{equation}
The running minimum makes $\ell_c$ the greatest nonincreasing function below $K$ with initial value $c$. Indeed, any such function must be at most both $c$ and every earlier value of $K$.

The mass identity $\int K=P^2$ gives a uniform denominator gap:
\begin{equation}\label{eq:mass-gap}
 J(c)\le\frac{\sqrt3}{4\pi}\int_0^S sK(s)\dd s
 =\frac{\sqrt3}{8\pi^2}P^2,
 \qquad D(c)\ge\left(1-\frac{\sqrt3}{2\pi}\right)M_0>0.
\end{equation}

\begin{proposition}\label{prop:height-root}
Assume in addition that $K(s)>0$ for $0\le s<S$ and $K(S)=0$. If $d(S)a\le M_0$, the optimal height is zero and $C(p)=4a/M_0$. Otherwise there is a unique $s_p\in(0,S)$ satisfying
\begin{equation}\label{eq:height-root}
 M_0=d(s_p)a+\int_{s_p}^S d'(u)\rho(u)\dd u,
\end{equation}
and $c_p=\rho(s_p)\in(0,a)$ is an optimal height, with
\begin{equation}\label{eq:root-value}
 C(p)=\frac4{d(s_p)}=\frac{32\pi}{\sqrt3\,s_p^2}.
\end{equation}
The conclusion does not require that $K$ be monotone or that $\rho$ have no plateaus.
\end{proposition}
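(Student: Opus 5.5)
The plan is to study the one-variable function $\Phi(c)=4(a-c)/D(c)$ on $[0,a]$ through the structure of $J(c)$. Since $\rho$ is continuous, nonincreasing, positive on $[0,S)$ with $\rho(0)=a$ and $\rho(S)=0$, the level sets $\{s:\rho(s)\ge c\}$ are intervals $[0,\sigma(c)]$. Here $\sigma(c)=\max\{s:\rho(s)\ge c\}$, which is well defined even when $\rho$ has plateaus. By the layer-cake formula,
\[
 J(c)=\int_0^c d(\sigma(\tau))\dd\tau ,
\]
since $\ell_c(s)=\int_0^c\ind\{\rho(s)\ge\tau\}\dd\tau$ and $\int_0^{\sigma(\tau)}d'=d(\sigma(\tau))$. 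Thus $J$ is concave in $c$: its integrand $d(\sigma(\tau))$ is nonincreasing in $\tau$, with right derivative $d(\sigma(c^+))$ and left derivative $d(\sigma(c))$. It follows that $D=M_0-J$ is convex, and it is positive by \eqref{eq:mass-gap}.

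Maximizing the ratio of the affine numerator $4(a-c)$ to the positive convex denominator $D$ is a quasi-concave problem. I would work with the stationarity condition
\[
 D(c)=(a-c)\,J'(c).
\]
Consider $\Psi(c)=D(c)-(a-c)J'(c)$. The maximizer is characterized by $\Psi$ changing sign from negative to nonnegative, with one-sided derivatives handled as supergradients. Indeed, for any $c'$, concavity of $J$ gives $D(c')\ge D(c)-J'(c)(c'-c)$. If $D(c)=(a-c)\xi$ with $\xi\in\partial J(c)$, then $D(c')\ge\xi(a-c')$, hence $\Phi(c')\le4/\xi$. This yields global optimality directly, with no monotonicity of $K$ needed, which is the remark at the end of the statement.

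Next I would identify the threshold. Put $c=\rho(s)$ and choose $\xi=d(s)$, where $s$ lies in the interval $\rho^{-1}(c)$; the values $d(s)$ over that interval fill out exactly the superdifferential of $J$ at $c$, which is the plateau case. For $\ell_c$ we have $\ell_c=c$ on $[0,s]$ and $\ell_c=\rho$ afterwards, up to the plateau, so
\[
 J(c)=c\,d(s)+\int_s^Sd'\rho .
\]
The stationarity equation $M_0-J(c)=(a-c)d(s)$ then becomes
\[
 M_0=a\,d(s)+\int_s^Sd'(u)\rho(u)\dd u=:\Lambda(s),
\]
which is \eqref{eq:height-root}. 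Differentiating gives $\Lambda'(s)=d'(s)(a-\rho(s))\ge0$, with strict inequality once $\rho<a$. Moreover $\Lambda(0)=J(a)\le(\sqrt3/2\pi)M_0<M_0$ and $\Lambda(S)=d(S)a$.

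This gives the two cases. If $d(S)a\le M_0$, then $\Lambda\le M_0$ everywhere. At $c=0$ the supergradient is $d(S)$, since $\sigma(0^+)\to S$ because $\rho>0$ on $[0,S)$. The certificate inequality $D(c')\ge D(0)-d(S)c'$ combined with $D(0)=M_0\ge a\,d(S)$ gives $\Phi\le4/\ldots$, and I would redo it to conclude $\Phi(c')\le4a/M_0$. If instead $d(S)a>M_0$, continuity and monotonicity of $\Lambda$ give a root $s_p\in(0,S)$. Uniqueness requires $\Lambda$ to be strictly increasing, which needs $\rho<a$ near $s_p$. If $\rho\equiv a$ on $[0,s_1]$, then $\Lambda$ is constant there, equal to $\Lambda(0)<M_0$, so the root lies beyond $s_1$, where $\rho<a$ and $\Lambda$ is strictly increasing. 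The value is then $C(p)=4/d(s_p)$, and $c_p=\rho(s_p)\in(0,a)$ because $s_p\in(s_1,S)$.

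I expect the main obstacle to be the careful bookkeeping of plateaus and one-sided derivatives. Specifically, one must verify that the superdifferential of $J$ at $c$ is exactly $[d(\sigma_-(c)),d(\sigma_+(c))]$, so that the certificate $\xi=d(s_p)$ is legitimate even when $s_p$ sits inside a flat piece of $\rho$.
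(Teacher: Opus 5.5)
Your proposal is correct and is essentially the paper's argument: the same root function $\Lambda$ (the paper's $\Psi$) with $\Lambda'=d'(a-\rho)$, the same case-one estimate $D(c)\ge M_0-d(S)c\ge M_0(1-c/a)$, and the same certificate $D(c)\ge d(s_p)(a-c)$ for all $c$. The only difference is how that certificate is obtained: the paper checks it directly by splitting the integral at $s_p$ and comparing $\min(c,\rho)$ with $c$ and with $\rho$, while you read it off as a supergradient inequality from the layer-cake concavity of $J$. The obstacle you anticipate does not arise, since you only need $d(s)\in\partial J(\rho(s))$, not an exact description of the superdifferential; this is immediate from $J(c')-J(c)=\int_c^{c'}d(\sigma(\tau))\dd\tau$ together with $\sigma(\tau)\le s$ for $\tau>\rho(s)$ and $\sigma(\tau)\ge s$ for $\tau\le\rho(s)$.
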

\begin{proof}
Set $\Psi(s)=d(s)a+\int_s^S d'(u)\rho(u)\dd u$. By \eqref{eq:mass-gap}, $\Psi(0)=J(a)<M_0$, whereas $\Psi(S)=d(S)a$. Also
\[
 \Psi'(s)=d'(s)(a-\rho(s))\ge0,
\]
and it is strictly positive after $\rho$ first becomes smaller than $a$. Thus the root in the second case is unique.

In the first case, $J(c)\le d(S)c$ gives $D(c)\ge M_0(1-c/a)$, proving optimality of $c=0$. In the second, monotonicity of $\rho$ gives $D(c_p)=d(s_p)(a-c_p)$. For every other height,
\begin{align*}
 D(c)-d(s_p)(a-c)
 ={}&\int_0^{s_p}d'(u)\big(c-\min(c,\rho(u))\big)\dd u\\
 &+\int_{s_p}^Sd'(u)\big(\rho(u)-\min(c,\rho(u))\big)\dd u\ge0.
\end{align*}
This proves global optimality and \eqref{eq:root-value} without differentiating the running minimum.
\end{proof}

The corresponding finite spherical optimization is the same argument with $d,\rho,M_0,a$ replaced by $f,\rho_G,g_0,A_0$, where $\rho_G(\theta)=\min_{u\le\theta}G(\cos u)$. In its positive-height branch the optimal lower bound is $1/f(\theta_G)$, with $\theta_G$ solving
\[
 g_0=f(\theta_G)A_0+\int_{\theta_G}^{\Theta}f'(u)\rho_G(u)\dd u.
\]
If the optimum occurs at height zero, the bound is instead $A_0/g_0$.

Under the positivity assumptions of Proposition~\ref{prop:height-root}, continuous thresholds strictly improve every fixed nonzero finite staircase with top height below $a$. If the last threshold is $s_J<S$ and its top height is $c_1>0$, the full envelope $\ell_{c_1}$ is positive on $(s_J,S)$ whereas the staircase vanishes there. The numerator is unchanged and the denominator strictly decreases. Quantitatively, for $s_J<b_1<b_2<S$ the extra integral is at least
\[
 \delta_J=\min\{c_1,\rho(b_2)\}\big(d(b_2)-d(b_1)\big)>0.
\]
If the staircase has numerator $4A_\Pi$ and denominator $D_\Pi$, its coefficient is improved by at least $4A_\Pi\delta_J/D_\Pi^2$.

\subsection{The relaxed cap class and compact maximizers}\label{sec:caps}
Core replacement can produce a positive measure on an interface circle, so it is useful to allow measure-valued companions from the outset. The resulting class is closed under the compactness argument below and is also the class used in the finite-degree transfer.

For $R\ge R_0$, define
\begin{equation}\label{eq:cap-class}
\begin{split}
\Acal_R=\{p\in H^1(\R^2):\;&p\text{ is real and radial},\quad p\ge0,\quad
 \supp p\subset\overline D_R,\\
 &\int p=1,\quad H=(1+\Delta)p\text{ is a nonnegative finite measure}\}.
\end{split}
\end{equation}
The equation for $H$ is distributional. Set $K=p*H$, $a=K(0)$, and use \eqref{eq:profile-def}. The normalized coefficient is
\begin{equation}\label{eq:cap-C}
 D_p(c)=\frac1{4\pi}-\int_0^{2R}d'(s)\ell_{p,c}(s)\dd s,
 \qquad C(p)=\max_{0\le c\le a}\frac{4(a-c)}{D_p(c)},
 \qquad C_R=\sup_{p\in\Acal_R}C(p).
\end{equation}
When $a=0$, define $C(p)=0$. Multiplying a nonnormalized cap by a positive constant leaves \eqref{eq:planar-functional} unchanged, so it agrees with this normalization.

\begin{lemma}\label{lem:cap-a-priori}
For every $p\in\Acal_R$, the companion has mass one, the cap has a continuous compactly supported representative, and
\begin{align}
 \norm p_\infty&\le U:=\frac{e}{4\pi},&
 \norm{\nabla p}_2^2&\le\norm p_2^2\le U,\label{eq:cap-bounds}\\
 0\le a&=\norm p_2^2-\norm{\nabla p}_2^2\le U,&
 D_p(c)&\ge D_*:=\frac{1-\sqrt3/(2\pi)}{4\pi}>0.\label{eq:cap-aD}
\end{align}
The kernel $K$ is continuous, nonnegative, supported in $\overline D_{2R}$, and has integral one. In radial coordinates,
\begin{equation}\label{eq:radial-flux}
 2\pi r p'(r)=H(D_r)-\int_{D_r}p\dd x\quad\text{for almost every }r>0,
\end{equation}
and hence
\begin{equation}\label{eq:radial-log}
 |rp'(r)|\le\frac1{2\pi},\qquad
 |p(r_1)-p(r_2)|\le\frac1{2\pi}\left|\log\frac{r_1}{r_2}\right|
 \quad(r_1,r_2>0).
\end{equation}
\end{lemma}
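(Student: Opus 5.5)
The natural order is: companion mass, then the flux identity and its consequences, then the sup bound via the heat majorant, and finally the quadratic quantities and the kernel.

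\emph{Companion mass and flux identity.} Test \eqref{eq:companion-weak} with $\eta\in C_c^\infty$ equal to one near $\overline D_R$. This gives $H(\R^2)=\int p=1$. Next, take radial test functions $\eta$ that approximate $\ind_{D_r}$, with the gradient concentrating on the circle $|x|=r$. Since $p\in H^1$ is radial, $p'$ exists almost everywhere as an $L^2_{\rm loc}$ function of $r$ with $\int|p'|^2r\,dr<\infty$. Passing to the limit at Lebesgue points of $rp'$ yields $H(D_r)=\int_{D_r}p+2\pi rp'(r)$ for almost every $r$, which is \eqref{eq:radial-flux}. Both $H(D_r)$ and $\int_{D_r}p$ lie in $[0,1]$, so $|rp'|\le 1/(2\pi)$ almost everywhere. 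In particular $p$ restricted to $(0,\infty)$ is locally absolutely continuous, and integrating $p'$ gives the logarithmic modulus \eqref{eq:radial-log}. Because $p(r)=0$ for $r>R$, $p$ is continuous on $\R^2\setminus\{0\}$ and compactly supported. At the origin, the flux identity shows $rp'(r)\to H(\{0\})/(2\pi)$. If $H(\{0\})>0$, then $p$ behaves like $-\log r$ to leading order and blows up, which contradicts the $L^\infty$ bound proved next. Hence $rp'\to0$ and the continuous extension exists. I would instead use the heat majorant below, which gives boundedness and hence excludes an atom; then $p(r)\to\sup$ monotonically near $0$ because $H(D_r)-\int_{D_r}p$ has a sign for small $r$.

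\emph{Sup bound.} Use \eqref{eq:heat-companion}: $0\le p\le e^tG_t*p\le e^t/(4\pi t)$ almost everywhere. The choice $t=1$ minimizes $e^t/t$ and gives $\norm p_\infty\le e/(4\pi)=U$. With continuity this holds everywhere. I expect the continuity at the origin to be the delicate point. Near $0$, $p$ is monotone, since the flux has a fixed sign for small $r$ once $H(\{0\})$ is excluded or even included. A monotone and bounded function has a limit, so continuity follows.

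\emph{Quadratic bounds and the kernel.} Since $\int p=1$, we get $\norm p_2^2\le\norm p_\infty\int p\le U$. By \eqref{eq:planar-companion-identities}, $K$ is continuous and nonnegative, has integral $P^2=1$, is supported in $\overline D_{2R}$, and satisfies $a=K(0)=\norm p_2^2-\norm{\nabla p}_2^2$. Nonnegativity of $K(0)$ then gives $\norm{\nabla p}_2^2\le\norm p_2^2$ and $0\le a\le U$. Finally, \eqref{eq:mass-gap} with $M_0=1/(4\pi)$ gives $D_p(c)\ge D_*$ for all $0\le c\le a$. This uses only $\ell_c\le K$ and the pointwise estimate $d'(s)=\frac{\sqrt3}{4\pi}s$, after which $\int_0^{2R}sK(s)\,ds=\frac1{2\pi}\int K=\frac1{2\pi}$.
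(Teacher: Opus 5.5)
\textbf{Overall.} Most of your proposal is correct and matches the paper:
\begin{itemize}
\item the companion mass by testing;
\item the heat majorant \eqref{eq:heat-companion} at $t=1$;
\item the quadratic bounds from $K(0)\ge0$;
\item the bound $D_p(c)\ge D_*$ from \eqref{eq:mass-gap}.
\end{itemize}
Your derivation of \eqref{eq:radial-flux} by testing \eqref{eq:companion-weak} against Lipschitz radial approximations of $\ind_{D_r}$ is a valid variant of the paper's argument. The paper identifies the derivative of $rp'$ with the radial measure of $H-p\dd x$ and then rules out an integration constant using $p\in H^1$. Your test functions contain the origin, so no constant arises. You only need to note that $H(\partial D_r)=0$ for all but countably many $r$.

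\textbf{The gap: continuity at the origin.} This is part of the statement, and you rightly flag it as the delicate point, but neither of your two arguments works.

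First, $rp'(r)\to0$ together with boundedness does not give a limit of $p$ at $0$. A radial $H^1$ function in the plane can oscillate. For example, $\sin(\log\log(1/r))$ near $0$ has $rp'\to0$ and finite Dirichlet energy, but no limit at $0$.

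Second, the claim that $H(D_r)-\int_{D_r}p$ has a fixed sign for small $r$ once $H(\{0\})=0$ is false. Both terms are $O(r^2)$, and $H$ may oscillate around $p(0)\dd x$. Solve the radial equation from $p(0)>0$ with companion density $p(0)(1+\sin\log r)$ near the origin. Then
\[
2\pi rp'(r)=\tfrac{2\pi}{5}p(0)r^2(2\sin\log r-\cos\log r)+O(r^4),
\]
which changes sign infinitely often. Such a core can be completed to an admissible cap. Take zero companion density outward until $p$ first vanishes, near $j_{0,1}<R_0$. Then place a positive circle measure there, which kills the derivative jump.

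\textbf{The fix.} What holds, and what the paper uses, is a one-sided bound. From $H(D_r)\ge0$ and $\int_{D_r}p\le\pi Ur^2$ (which needs the sup bound first) you get $p'(r)\ge-Ur/2$ almost everywhere. Hence $p(r)+Ur^2/4$ is nondecreasing on $(0,\infty)$. Since it is bounded below by $0$, it has a finite limit as $r\downarrow0$, and this limit defines $p(0)$ continuously. With this replacement your proof is complete.
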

\begin{proof}
The mass identity in Subsection~\ref{subsec:radial-preliminaries} gives $H(\R^2)=1$. The heat-kernel majorant \eqref{eq:heat-companion}, with $t=1$, yields $0\le p\le e/(4\pi)=U$ almost everywhere.

By \eqref{eq:planar-companion-identities}, $K$ is continuous and nonnegative, has the asserted support and mass, and satisfies the diagonal energy identity. Nonnegativity of the diagonal gives the gradient estimate, and $\norm p_2^2\le\norm p_\infty\norm p_1\le U$. The denominator bound is \eqref{eq:mass-gap}.

For \eqref{eq:radial-flux}, radial tests on annuli identify the bounded-variation derivative of $rp'$ as the radial measure of $H-p\dd x$. The integration constant at the origin is zero: a nonzero constant would give a $1/r$ term in $p'$, inconsistent with $p\in H^1$. Integration proves the identity almost everywhere; at circle atoms it can also be read with one-sided representatives. The two cumulative masses in \eqref{eq:radial-flux} lie in $[0,1]$, which proves \eqref{eq:radial-log} by integration, including intervals crossing the support boundary.

It remains to specify the value at the origin. The flux identity and $p\le U$ give $p'(r)\ge-Ur/2$. Therefore $p(r)+Ur^2/4$ is nondecreasing and bounded near zero, with a finite limit there. Define $p(0)$ by this limit. On every annulus, radial $H^1$ regularity gives continuity; at the support boundary the zero extension has the same continuous trace. This yields the claimed global representative. The estimates now hold pointwise where appropriate.
\end{proof}

The class is nonempty. The following Bessel profile is the planar baseline used in our cap optimization and is motivated by the small-scale profile associated with Yudin's cap~\cite{Yudin}. Let
\begin{equation}\label{eq:Bessel-cap}
 q(r)=J_0(r)-J_0(R_0)\quad(0\le r\le R_0),\qquad q(r)=0\quad(r>R_0),
 \qquad b_0=-J_0(R_0)>0.
\end{equation}
Since $J_1>0$ before its first positive zero, $q$ is positive and decreasing in the disk, and $q(R_0)=q'(R_0)=0$. The Bessel equation gives $(1+\Delta)q=b_0\ind_{D_{R_0}}$. Also
\begin{equation}\label{eq:Bessel-masses}
 P_0=\int q=\pi R_0^2b_0,\qquad K_q(0)=\pi R_0^2b_0^2.
\end{equation}
Hence $q/P_0\in\Acal_R$ for every $R\ge R_0$, and its zero-height coefficient is $16/R_0^2>0$. In particular $0<C_R<\infty$.

\begin{theorem}\label{thm:cap-compactness}
For every fixed $R\ge R_0$, there is $p_R^*\in\Acal_R$ with $C(p_R^*)=C_R$. Every maximizing sequence has a strongly $H^1(\R^2)$ convergent subsequence.
\end{theorem}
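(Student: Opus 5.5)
Take a maximizing sequence $p_n\in\Acal_R$ with $C(p_n)\to C_R$. The Bessel cap gives $C_R\ge16/R_0^2>0$, so $a_n=K_n(0)$ stays bounded below along the sequence, since $C(p)\le 4a/D_*$ by \eqref{eq:cap-aD}. Lemma~\ref{lem:cap-a-priori} bounds $\norm{p_n}_{H^1}^2\le2U$, so a subsequence converges weakly in $H^1$ to some $p$. The supports lie in the fixed disk $\overline D_R$, so Rellich gives strong $L^2$ convergence, and hence $\int p=1$ and $p\ge0$. The log-Lipschitz estimate \eqref{eq:radial-log} and $\norm{p_n}_\infty\le U$ make the radial profiles equicontinuous away from $0$, so Arzel\`a--Ascoli gives locally uniform convergence on $(0,2R]$. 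The companions $H_n$ are probability measures on $\overline D_R$. After a further subsequence $H_n\rightharpoonup H$ weakly-$*$, and the weak formulation \eqref{eq:companion-weak} passes to the limit, giving $H=(1+\Delta)p\ge0$ with mass one. Thus $p\in\Acal_R$: the class is closed.

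Next I would show $C(p)\ge\limsup C(p_n)$. First, the kernels converge: $K_n=p_n*H_n\to p*H=K$ uniformly on $\overline D_{2R}$. Indeed $p_n\to p$ uniformly, because the profiles are uniformly bounded, equicontinuous away from the origin, and satisfy $p_n(r)+Ur^2/4$ nondecreasing near $0$ (controlling the origin). The $H_n$ have unit mass and converge weakly, and $p$ is uniformly continuous, so uniform convergence of $K_n$ follows. Consequently $a_n\to a$. The running minima $\rho_n\to\rho$ uniformly, since the running minimum is $1$-Lipschitz in sup norm. Hence for each fixed $c$, $\ell_{n,c}\to\ell_c$ uniformly and $D_{p_n}(c)\to D_p(c)$.

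The maps $c\mapsto 4(a_n-c)/D_{p_n}(c)$ are continuous, and their denominators are bounded below by $D_*$. They converge uniformly in $c\in[0,\min(a,a_n)]$, with the interval endpoints also converging. This yields $C(p_n)\to C(p)$. So $C(p)=C_R$, which gives attainment with $p_R^*=p$.

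Strong $H^1$ convergence is the delicate step, because weak convergence alone only gives $\norm{\nabla p}_2\le\liminf\norm{\nabla p_n}_2$. Here I would use the diagonal identity \eqref{eq:planar-companion-identities}: $\norm{\nabla p_n}_2^2=\norm{p_n}_2^2-a_n$. Both terms on the right converge, by strong $L^2$ convergence and by $a_n\to a$, so $\norm{\nabla p_n}_2^2\to\norm{p}_2^2-a=\norm{\nabla p}_2^2$. Weak convergence plus convergence of norms gives strong convergence. The main obstacle I expect is the uniform convergence of $K_n$, specifically near the origin, where weak measure convergence of $H_n$ must be paired with uniform convergence of $p_n$. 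If the monotonicity trick near $0$ fails, I would instead use the Fourier representation \eqref{eq:radial-Fourier}. There $|\widehat p_n|\le1$, and the integrand $(1-t^2)|\widehat p_n|^2t$ is dominated via the $H^1$ bound, with the high-$t$ tail controlled by its sign and by Fatou.
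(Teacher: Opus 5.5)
The step that fails is your claim that $p_n\to p$ uniformly, on which your uniform convergence of $K_n$, and in particular $a_n\to a$, rests. The three properties you cite hold for every sequence in $\Acal_R$. The monotonicity of $p_n(r)+Ur^2/4$ only yields $\limsup_n p_n(0)\le p(0)$. It rules out upward spikes at the origin, but not downward logarithmic dips, which in two dimensions cost almost no Dirichlet energy.

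Here is a counterexample. Let $p\in\Acal_R$ have companion density at least $\eta>0$ on some $D_{r_1}$ with $r_1<1$; the normalized Bessel cap does. Fix a smooth radial cutoff $\chi$ equal to one on $D_{r_1/2}$ and vanishing outside $D_{r_1}$. Set $\phi_n=\kappa L_n^{-1}\chi(r)\min\{\log(1/r),L_n\}$, with $L_n\to\infty$ and $0<\kappa<\min\{\eta/2,\min_{D_{r_1}}p\}$. Then $-\Delta\phi_n$ is a positive circle measure of mass $2\pi\kappa/L_n$ on $\partial D_{e^{-L_n}}$, plus a density of size $O(\kappa/L_n)$ on the cutoff annulus. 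Hence $(1+\Delta)(p-\phi_n)\ge0$, while $p-\phi_n\ge0$ and $\norm{\nabla\phi_n}_2^2=O(\kappa^2/L_n)$. After renormalizing the mass you get $p_n\in\Acal_R$ with $p_n\to p$ strongly in $H^1$, but $p_n(0)\to p(0)-\kappa$. So uniform convergence of the caps fails even along $H^1$-convergent sequences. Your derivation of uniform convergence of $K_n(s)=\int p_n(se_1-y)\dd H_n(y)$ therefore does not go through, since it evaluates $p_n$ near its origin whenever $y$ is near $se_1$.

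Your fallback does not close the gap either. At $s=0$ the $H^1$ bound controls $\int t^3|\widehat p_n|^2\dd t$, but it gives no uniform integrability of the tail. The sign of $1-t^2$ and Fatou then yield only $\limsup_n a_n\le a$, which is exactly the defect inequality \eqref{eq:diag-defect}. It also gives no control of the running minima near $s=0$.

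The paper handles this through maximality, in three steps:
\begin{enumerate}
\item It proves uniform convergence of $K_j$ only on $[\delta,2R]$.
\item It uses the one-sided bound $K_j(s)\ge a_j-s^2/(96\pi)$, together with $a_j-c_j\ge\delta_0$, to make the truncated envelopes equal the height on $[0,\delta]$. This makes the denominators converge.
\item The chain $C_R=4(a_*-c)/D_p(c)\le4(a-c)/D_p(c)\le C(p)\le C_R$ then forces both attainment and $a=a_*$. After that, your energy-identity argument gives strong $H^1$ convergence.
\end{enumerate}

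Alternatively, you can repair your own route by replacing uniform convergence of $p_n$ with non-concentration of $H_n$. By \eqref{eq:radial-flux}, an atom of mass $\mu>0$ of $H$ at the origin would force $rp'(r)\to\mu/(2\pi)$, which is incompatible with $p\in H^1$. Hence $\limsup_nH_n(\overline D_\delta)\le H(\overline D_\delta)\to0$. Splitting $\int(p_n-p)\dd H_n$ over $D_\delta$ and its complement then gives $a_n\to a$. Radiality makes $H_n(D_\delta(se_1))$ small uniformly in $s$, which gives uniform convergence of $K_n$.

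That repair would even show continuity of $C$ on all of $\Acal_R$. But it is the missing idea: as written, your key step rests on a false claim.
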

\begin{proof}
The bounds in Lemma~\ref{lem:cap-a-priori} and fixed support give weak $H^1$ and strong $L^2$ compactness. The latter is the usual Rellich argument; alternatively, first approximate uniformly in $L^2$ by convolutions with a mollifier using $\norm{p-p*\eta_\delta}_2\le C\delta\norm{\nabla p}_2$, use compactness for the smooth family on a bounded set, and then let $\delta$ tend to zero. Every resulting limit retains radiality, nonnegativity, support, and mass. Its companion is a positive distribution and therefore a positive measure, so the limit is feasible.

Take a maximizing sequence $p_j$, optimal heights $c_j$, and a subsequence with $p_j\rightharpoonup p$ in $H^1$ and $p_j\to p$ in $L^2$. Put $a_j=K_j(0)$, and pass further so that $a_j\to a_*$ and $c_j\to c$. Weak lower semicontinuity gives only
\begin{equation}\label{eq:diag-defect}
 a:=K(0)\ge a_*;
\end{equation}
the possible strict inequality is the obstruction to obtaining convergence of the diagonal energies from weak convergence alone.

For $s>0$, apply the radial Fourier formula \eqref{eq:radial-Fourier} to $K_j=p_j*H_j$. The large-argument estimate in \eqref{eq:bessel-basic-bounds}, together with the uniform $H^1$ bound, controls the high-frequency tail uniformly for $s\ge\delta>0$. On bounded frequency intervals, fixed support and strong $L^2$ convergence imply uniform convergence of $\widehat p_j$. Thus
\begin{equation}\label{eq:offdiag-convergence}
 K_j\longrightarrow K\quad\text{uniformly on }[\delta,2R]\quad(\delta>0).
\end{equation}

There is also a uniform one-sided estimate near zero. Since $|\widehat p_j|\le1$, $J_0(v)\le1$, and $1-J_0(v)\le v^2/4$, the part $t\ge1$ of $K_j(s)-a_j$ is nonnegative, while the remaining part is at least
\[
 -\frac{s^2}{8\pi}\int_0^1(1-t^2)t^3\dd t=-\frac{s^2}{96\pi}.
\]
Hence
\begin{equation}\label{eq:diagonal-control}
 K_j(s)\ge a_j-\frac{s^2}{96\pi}.
\end{equation}
Since $C(p_j)\to C_R>0$ and $D_{p_j}(c_j)\ge D_*$, there is $\delta_0>0$ such that $a_j-c_j\ge\delta_0$ for all sufficiently large $j$. Choose $\delta>0$ with $\delta^2/(96\pi)<\delta_0/2$. Equation~\eqref{eq:diagonal-control} then places $K_j$ strictly above $c_j$ on $[0,\delta]$, so $\ell_{p_j,c_j}=c_j$ there. The same conclusion holds for the limiting height and kernel because $a\ge a_*$. Away from that interval, \eqref{eq:offdiag-convergence} controls the running minima uniformly. Thus the truncated envelopes converge uniformly on $[0,2R]$, and their denominators converge as well.

It follows that
\[
 C_R=\lim_j C(p_j)=\frac{4(a_*-c)}{D_p(c)}
 \le\frac{4(a-c)}{D_p(c)}\le C(p)\le C_R.
\]
All inequalities are equalities. This proves attainment and also $a=a_*$, since otherwise the first inequality would be strict. The diagonal energy identity and strong $L^2$ convergence now give convergence of the gradient norms. Together with weak convergence this yields strong $H^1$ convergence.
\end{proof}

The compactness statement keeps $R$ fixed. No uniqueness or regularity of the boundary of the maximizing cap is asserted.

\subsection{Positive-potential conjugation and positive companions}\label{sec:transfer}
The transfer must preserve positivity of $(1+\Delta)p$ for every $p\in\Acal_R$, not just for caps with a strictly positive smooth companion density. We choose a radial amplitude that removes the discrepancy between the planar and spherical first-derivative coefficients. The remaining term is nonnegative, and the kernel error can be bounded using only $R$.

Set
\begin{equation}\label{eq:spectral-scale}
 n=2m+2,\qquad k=\sqrt{n(n+1)},\qquad k\ge4R,
\end{equation}
and define
\begin{equation}\label{eq:amplitude}
 J_k(r)=\frac{\sin(r/k)}{r/k},\qquad A_k(r)=J_k(r)^{-1/2},\qquad
 q_n(\cos\theta)=A_k(k\theta)p(k\theta).
\end{equation}
The cap is zero beyond $\theta=R/k$. It is nonnegative, continuous, and belongs to $H^1(\Sph)$.

Using the radial operators in \eqref{eq:radial-laplacians}, logarithmic differentiation gives
\[
 2A_k'/A_k+k^{-1}\cot(r/k)=1/r.
\]
A second differentiation yields the exact identity
\begin{equation}\label{eq:conjugation}
 (1+k^{-2}\Delta_{\Sph})(A_kp)
 =A_k\big((1+\Delta_{\R^2})p+V_kp\big),
\end{equation}
where
\begin{equation}\label{eq:potential}
 V_k(r)=\frac1{4k^2}\left(1+\csc^2(r/k)-\frac{k^2}{r^2}\right),
 \qquad V_k(0)=\frac1{3k^2}.
\end{equation}
For example, putting $b=1/r$ and $c=k^{-1}\cot(r/k)$ gives $(A_k''+cA_k')/A_k=(b'-c')/2+(b^2-c^2)/4$, which is \eqref{eq:potential}. The identity extends from smooth annuli to radial distributions by multiplication with the smooth radial amplitude.

For $0\le\theta\le1/2$, $\sin\theta\le\theta$ and $\sin\theta\ge\theta(1-\theta^2/6)$ imply
\[
 0\le\csc^2\theta-\theta^{-2}
 \le\frac{1/3-\theta^2/36}{(1-\theta^2/6)^2}
 \le\frac{192}{529}<1.
\]
Therefore
\begin{equation}\label{eq:potential-positive}
 0<V_k(r)\le\frac1{2k^2}
\end{equation}
in the required range.

Let $H_n=q_n+k^{-2}\Delta_{\Sph}q_n$, understood as a distribution relative to probability surface measure. Pulling it back to planar normal coordinates and multiplying by $4\pi k^2$ gives the measure identity
\begin{equation}\label{eq:measure-pullback}
 \mu_k=\sqrt{J_k}\,H+\sqrt{J_k}\,V_kp\dd x\ge0.
\end{equation}
In particular, the singular part of $H$ is multiplied by $\sqrt{J_k}>0$, so the measure remains nonnegative.

Define $G_n=q_n*H_n$ by spherical convolution and put
\[
 P_k=4\pi k^2\int_{\Sph}q_n\dd\sigma,
 \qquad \widetilde K_n(s)=4\pi k^2G_n(\cos(s/k)).
\]
The same amplitude also decreases the mass and increases the diagonal kernel value:
\begin{equation}\label{eq:mass-diagonal-benefit}
 P_k=\int\sqrt{J_k}\,p\dd x=\mu_k(\R^2)\le1,
 \qquad \widetilde K_n(0)=a+\int V_kp^2\dd x\ge a.
\end{equation}
The weak identity $\int p\dd H=\norm p_2^2-\norm{\nabla p}_2^2$ can be justified by mollification converging uniformly and strongly in $H^1$; compact continuity of $p$ controls the measure pairing.

By \eqref{eq:companion-spectrum}, if $q_n=\sum a_\ell P_\ell$, then $G_n$ has Legendre coefficients
\begin{equation}\label{eq:conjugated-spectrum}
 \left(1-\frac{\ell(\ell+1)}{n(n+1)}\right)
 \frac{|a_\ell|^2}{2\ell+1}.
\end{equation}
They are nonpositive for $\ell\ge n$. The absolute uniform convergence and nonnegativity follow from Subsection~\ref{subsec:lp-turan-preliminaries}, since $q_n\in H^1(\Sph)$ and $q_n,H_n\ge0$. Thus $G_n$ has the spectral and positivity properties required for degree $n-1$.

\subsection{Uniform kernel and packing errors}
On the radius-$2R$ scaled normal-coordinate disk, the spherical metric is
\[
 dr^2+k^2\sin^2(r/k)\dd\phi^2.
\]
Its length factors lie between $1-2R^2/(3k^2)$ and one. The corresponding spherical disk has angular radius at most $1/2$ and is geodesically convex. Comparing a planar segment with a spherical minimizing geodesic gives, for $|u|\le R$ and $0\le s\le2R$,
\begin{equation}\label{eq:distance-compare}
 (1-2R^2/(3k^2))|u-se_1|\le d_k(u,se_1)\le|u-se_1|,
\end{equation}
where $d_k$ is the scaled spherical distance. Put $t=R^2/k^2\le1/16$. Since
\[
 -\log(1-2t/3)\le\frac{2t/3}{1-2t/3}\le\frac{16}{23}t<t,
\]
the logarithmic control \eqref{eq:radial-log} yields
\begin{equation}\label{eq:distance-function}
 |p(d_k(u,se_1))-p(|u-se_1|)|\le\frac{R^2}{2\pi k^2}.
\end{equation}
At coincident arguments both values agree. Thus the relative metric comparison is controlled by the logarithmic modulus in \eqref{eq:radial-log}.

Using $1-J_k\le r^2/(6k^2)$ and $(1-t)^{-1/2}\le1+t$ for $0\le t\le1/2$ gives
\[
 \norm{A_kp-p}_\infty\le\frac{UR^2}{6k^2},\qquad
 \norm{\mu_k-H}_{\mathrm{TV}}\le\frac{R^2/6+1/2}{k^2}.
\]
In the first inequality it is enough to estimate where $p\ne0$. Write
\[
 \widetilde K_n(s)=\int A_k(d_k(u,se_1))p(d_k(u,se_1))\dd\mu_k(u)
\]
and compare with $K(s)=\int p(|u-se_1|)\dd H(u)$. Using $\mu_k(\R^2)\le1$ and \eqref{eq:distance-function} yields
\begin{equation}\label{eq:uniform-kernel-error}
 \norm{\widetilde K_n-K}_{C[0,2R]}\le\frac{E_R}{k^2},
 \qquad E_R=U\left(\frac{R^2}{3}+\frac12\right)+\frac{R^2}{2\pi}.
\end{equation}

For the packing function from Subsection~\ref{subsec:lp-turan-preliminaries}, \eqref{eq:packing-error} gives, with $\beta_n(s)=k^2f(s/k)$,
\begin{equation}\label{eq:scaled-packing-error}
 |\beta_n(s)-d(s)|\le\frac{F_R}{k^2},\qquad F_R=\frac23R^4,
 \qquad 0\le s\le2R.
\end{equation}

\subsection{Transfer of all heights and all caps}
\begin{theorem}\label{thm:effective-transfer}
Define
\begin{equation}\label{eq:MR}
 B_R=E_R\frac{\sqrt3R^2}{2\pi}+\frac23UR^4,
 \qquad M_R=\frac{UB_R}{D_*^2}.
\end{equation}
For $n=2m+2$ and $n(n+1)\ge16R^2$,
\begin{equation}\label{eq:effective-MR}
 N_{2m+1}\ge C_R(m+1)(m+3/2)-M_R.
\end{equation}
For every $R\ge R_0$ and $m\ge3$, without a spectral threshold,
\begin{equation}\label{eq:effective-19}
 N_{2m+1}\ge\left\lceil C_R(m+1)(m+3/2)-19R^4\right\rceil.
\end{equation}
\end{theorem}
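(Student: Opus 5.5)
The plan is to apply the spherical LP--Tur\'an bound, Proposition~\ref{prop:continuous-sphere}, to the transferred kernel $G_n=q_n*H_n$ of Subsection~\ref{sec:transfer}. The cap $p$ is either the maximizer $p_R^*$ from Theorem~\ref{thm:cap-compactness} or any $p\in\Acal_R$, followed by a supremum over $p$. Fix $p$ with $a>0$ and an optimal height $c\in[0,a)$ for $C(p)$.

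\emph{Validity of the kernel.} Since $k\ge4R$, the kernel $G_n$ has angular support in $[0,2R/k]\subset[0,1/2]$. By \eqref{eq:conjugated-spectrum}, $G_n$ has the spectral and positivity properties required for degree $n-1=2m+1$. In the scaled variable $s=k\theta$ we have $G_n(\cos\theta)=\widetilde K_n(s)/(4\pi k^2)$ and $G_0=P_k^2/(4\pi k^2)$. As the envelope I take
\[
 \ell(\theta)=\frac{1}{4\pi k^2}\bigl(\min\{c,\widetilde\rho(k\theta)\}-E_R/k^2\bigr)_+ ,
\]
where $\widetilde\rho$ is the running minimum of $\widetilde K_n$, modified to vanish continuously at $s=2R$. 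This $\ell$ is nonincreasing, lies below $G_n$, and has $\ell(0)\le c/(4\pi k^2)$.

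\emph{Applying Proposition~\ref{prop:continuous-sphere}.} Substituting $\theta=s/k$ and $f(\theta)=\beta_n(s)/k^2$ in \eqref{eq:continuous-sphere} gives
\[
 N_{2m+1}\ge\frac{k^2}{4}\cdot\frac{4(\widetilde K_n(0)-c)}{P_k^2/(4\pi)-\int_0^{2R}\beta_n'(s)\,\widetilde\ell(s)\dd s},
\]
where $\widetilde\ell(s)=4\pi k^2\,\ell(s/k)$ is the envelope in the scaled variable. Note that $k^2/4=(m+1)(m+3/2)$. By \eqref{eq:mass-diagonal-benefit}, $\widetilde K_n(0)\ge a$ and $P_k\le1$, so both of these changes only help.

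\emph{Error accounting.} Running minima are $1$-Lipschitz for the sup norm, and the truncation $\min\{c,\cdot\}$ is also $1$-Lipschitz. Hence \eqref{eq:uniform-kernel-error} gives $\widetilde\ell\ge\ell_{p,c}-E_R/k^2$ on $[0,2R]$. Integrating against $d'$, whose total integral is $d(2R)=\sqrt3R^2/(2\pi)$, costs at most $E_R\sqrt3R^2/(2\pi k^2)$.

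For the packing comparison I replace $\beta_n'$ by $d'$ by integrating by parts against the monotone envelope. The total variation of the envelope is at most $a\le U$, and \eqref{eq:scaled-packing-error} then costs at most $\tfrac23UR^4/k^2$. Together the denominator rises by at most $\delta=B_R/k^2$.

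The elementary inequality $x/(D+\delta)\ge x/D-x\delta/D^2$ applies with $x=4(a-c)\le4U$ and $D\ge D_*$. I expect the factor $4$ to cancel against $k^2/4$ and leave precisely $M_R=UB_R/D_*^2$ as the loss. Taking the supremum over $p$, or using $p_R^*$ directly, gives \eqref{eq:effective-MR}.

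\emph{The integer bound \eqref{eq:effective-19}.} When $k\ge4R$, a numerical evaluation for $R\ge R_0$ (with $U=e/(4\pi)$ and $D_*\approx0.0575$) should give $M_R\le19R^4$. I would check this by bounding $E_R\le UR^2/3+U/2+R^2/(2\pi)$ and using $R^2\ge R_0^2>14$ to absorb the lower-order terms. Integrality of $N_{2m+1}$ then allows the ceiling.

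When $k<4R$, I use the crude bound $C_R\le4U/D_*\approx15$, which follows from $a\le U$ and $D_p\ge D_*$. This gives $C_Rk^2/4<4R^2C_R\le60R^2\le19R^4$, since $R^2\ge R_0^2>60/19$. So the right side of \eqref{eq:effective-19} is nonpositive and the bound is trivial.

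\emph{Main obstacle.} I expect the hardest step to be the error accounting: making the perturbed envelope a legitimate admissible $\ell$, that is, continuous, nonincreasing, below $G_n$, and vanishing at $\Theta$, while still tracking the height optimization. The Lipschitz property of the running minimum should keep this uniform without any regularity of $K$.
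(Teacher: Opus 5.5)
Your proposal is correct and follows the paper's proof essentially step for step. The shared steps are: Proposition~\ref{prop:continuous-sphere} applied to $G_n$ with an envelope lowered by $E_R/k^2$; the kernel charge $E_Rd(2R)/k^2$ and the packing charge $F_RU/k^2$, the latter via Stieltjes integration against a nonincreasing envelope of variation at most $U$; the inputs $P_k\le1$ and $\widetilde K_n(0)\ge a$; and $(D+\delta)^{-1}\ge D^{-1}-\delta D^{-2}$. The only real difference is the case $k<4R$. There your elementary bound $C_R\le4U/D_*=4e/(1-\sqrt3/(2\pi))\approx15.01$ replaces the paper's $C_R\le13/8$ from Corollary~\ref{cor:upper}; since the value is slightly above $15$, close with $61R^2<19R^4$ instead of $60R^2$.

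Two small repairs are needed. First, as you define it, $\widetilde\ell=(\min\{c,\widetilde\rho\}-E_R/k^2)_+$ only satisfies $\widetilde\ell\ge\ell_{p,c}-2E_R/k^2$, which doubles the kernel term. To get exactly $B_R$, either drop the subtraction, since the running minimum of $\widetilde K_n$ is already continuous, lies below $\widetilde K_n$, and vanishes at $2R$; or take the paper's $\ell_n=(\ell_{p,c}-E_R/k^2)_+$. Second, $G_0=(P_k/(4\pi k^2))^2$, not $P_k^2/(4\pi k^2)$, although your displayed ratio is correct.
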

\begin{proof}
Fix $p\in\Acal_R$ and $0\le c<a$, and put $A=a-c$, $D=D_p(c)$. Lower and truncate the envelope:
\[
 \ell_n=(\ell_{p,c}-E_R/k^2)_+.
\]
It is a continuous nonincreasing lower bound for $\widetilde K_n$. By the favorable diagonal inequality in \eqref{eq:mass-diagonal-benefit},
\[
 \widetilde K_n(0)-\ell_n(0)\ge a-c=A.
\]
This remains valid when $c\le E_R/k^2$, including $c=0$, so there is no height-dependent threshold.

Applying Proposition~\ref{prop:continuous-sphere} at the physical scale gives
\begin{equation}\label{eq:transfer-ratio}
 N_{n-1}\ge k^2
 \frac{\widetilde K_n(0)-\ell_n(0)}
 {P_k^2/(4\pi)-\int_0^{2R}\beta_n(s)(-d\ell_n(s))}.
\end{equation}
Its actual denominator is positive by \eqref{eq:packing-derivative}. Since the Stieltjes measure has mass at most $c\le U$, equations~\eqref{eq:uniform-kernel-error} and \eqref{eq:scaled-packing-error} give
\begin{align*}
 \int_0^{2R}\beta_n(-d\ell_n)
 &\ge\int_0^{2R}d'(s)\ell_n(s)\dd s-\frac{F_RU}{k^2}\\
 &\ge\int_0^{2R}d'(s)\ell_{p,c}(s)\dd s
 -\frac{E_Rd(2R)+F_RU}{k^2}.
\end{align*}
Using $P_k\le1$ in \eqref{eq:transfer-ratio} therefore yields
\[
 N_{n-1}\ge k^2\frac{A}{D+B_R/k^2}
 \ge k^2\frac AD-\frac{AB_R}{D^2}
 \ge k^2\frac AD-M_R.
\]
The penultimate inequality uses $(1+u)^{-1}\ge1-u$. Take the best height and then the supremum over all caps, using the error uniformity. Since $k^2/4=(m+1)(m+3/2)$, this proves \eqref{eq:effective-MR}. The case $a=0$ is trivial and does not affect the supremum. Attainment or regularity of an optimizing cap is not required for this step.

To make the constant elementary, use $e<11/4$, $3<\pi<22/7$, $\sqrt3<7/4$, and $R\ge1$. Then
\[
 U<\frac{11}{48},\qquad D_*>\frac{119}{2112},\qquad
 d(2R)<\frac7{24}R^2,
\]
so
\[
 E_R\le\frac{103}{288}R^2,\qquad
 B_R\le\frac{1777}{6912}R^4,\qquad
 M_R<\frac{2365187}{127449}R^4<19R^4.
\]
This proves \eqref{eq:effective-19} when $k\ge4R$.

For each fixed $R$, \eqref{eq:effective-MR} and Corollary~\ref{cor:upper}, upon division by $m^2$ and passage to infinity, already give $C_R\le13/8$. If $k<4R$, then
\[
 C_R(m+1)(m+3/2)=C_Rk^2/4<\frac{13}{2}R^2<19R^4.
\]
The right-hand side of \eqref{eq:effective-19} is nonpositive, so the bound is automatic. This covers all $m\ge3$.
\end{proof}

The constant $M_R$ is independent of the cap, its companion measure, and the chosen height. Hence the same estimate applies to the maximizing cap of Theorem~\ref{thm:cap-compactness}. Above the spectral threshold, $M_R$ gives the sharper finite-degree remainder; $19R^4$ is the form valid for all $m\ge3$.

\section{Cap variations and radius optimization}\label{sec:variations}
We now vary the cap itself. The first variation below increases the coefficient of the baseline Bessel cap. A finite replacement on a central disk then maximizes the diagonal energy at fixed core mass and boundary trace. To turn this energy gain into a gain in the full quotient, the replacement must leave the truncated kernel envelope unchanged; this is the purpose of the protection conditions.

\subsection{The baseline Bessel cap and a strict first variation}
For the cap $q$ in \eqref{eq:Bessel-cap}, let
\[
 K_0=b_0(q*\ind_{D_{R_0}}),\qquad
 \eta(a)=K_0(R_0a)/K_0(0),\quad0\le a\le2.
\]
The function $\eta$ is strictly decreasing before its support endpoint. To see this, express the strictly decreasing radial function $q$ as a positive integral of disk indicators; the overlap of two disks decreases with separation, and for each $0<s<2R_0$ a positive-measure family of those overlaps decreases strictly. Also $\eta(0)=1$, $\eta(2)=0$, and \eqref{eq:Bessel-masses} gives
\[
 \int_0^2 a\eta(a)\dd a=\frac12.
\]
Write
\[
 \beta(a)=\frac{\sqrt3a^2}{2\pi},\quad
 I(a)=\int_a^2\beta(u)(-d\eta(u)),\quad
 \Phi(a)=1-I(a)-\beta(a)(1-\eta(a)).
\]
Integration by parts gives
\[
 \Phi(a)=1-\beta(a)-\int_a^2\beta'(u)\eta(u)\dd u,
 \qquad \Phi'(a)=-\beta'(a)(1-\eta(a))<0.
\]
Its endpoint values are $1-\sqrt3/(2\pi)>0$ and $1-2\sqrt3/\pi<0$. Thus there is a unique $a_*\in(0,2)$ with $\Phi(a_*)=0$. Proposition~\ref{prop:height-root} gives
\begin{equation}\label{eq:bessel-Cstar}
 C(q)=C_*=
 \frac{16(1-\eta(a_*))}{R_0^2(1-I(a_*))}
 =\frac{32\pi}{\sqrt3R_0^2a_*^2}>\frac{16}{R_0^2}.
\end{equation}
The strict inequality also follows directly from the positive tail term in \eqref{eq:height-root}. At separation $\sqrt2R_0$, the overlap lens lies strictly inside a half-disk; hence $\eta(\sqrt2)<1/2$. It follows that
\[
 \Phi(\sqrt2)>1-\frac{3\sqrt3}{2\pi}>0,\qquad a_*>\sqrt2.
\]

The original cap is not stationary in the full feasible shape class. Set
\[
 v(r)=q(r)\frac{r^2}{R_0^2},\qquad p_\eps=q+\eps v.
\]
Its companion is $b_0\ind_{D_{R_0}}+\eps w$, where, within the disk,
\[
 w(r)=\frac{b_0r^2-4rJ_1(r)+4q(r)}{R_0^2}.
\]
For sufficiently small positive $\eps$, both $p_\eps$ and its companion are positive in the disk, and the boundary double zero is unchanged.

To compute the variation of the normalized kernel, set $g(r)=r^2/R_0^2$, let
\[
 d\nu(r)=\frac{rq(r)\dd r}{\int_0^{R_0}rq(r)\dd r},
\]
and let $\omega_s(r)$ be the angular length of the circle of radius $r$ lying in the disk centered at $se_1$ with radius $R_0$. Commutation of convolution with $1+\Delta$ gives $\dot K=2b_0v*\ind_{D_{R_0}}$. Therefore, for the normalized kernel profile at physical separation $s$,
\begin{equation}\label{eq:covariance-variation}
 \left.\frac d{d\eps}\frac{K_\eps(s)}{K_\eps(0)}\right|_{\eps=0}
 =\frac1\pi\Cov_\nu(g,\omega_s).
\end{equation}
For $s\ge\sqrt2R_0$, the nonzero overlap angle is increasing in $r$ on a positive-measure interval. This follows by differentiating $(r^2+s^2-R_0^2)/(2rs)$, the cosine of its half-angle. Since $g$ is strictly increasing, the covariance is positive for $\sqrt2R_0\le s<2R_0$.

Put $s_*=R_0a_*$. The derivative of $K_\eps(0)/P_\eps^2$ is zero at the base cap, because $K_0(0)=b_0P_0$ and $\dot K(0)=2b_0\dot P$. At the optimal base height, the derivative contributions at $s_*$ cancel by the root equation. In terms of $\widehat\eta_\eps(s)=K_\eps(s)/K_\eps(0)$, the logarithmic first variation of the fixed-threshold quotient is
\begin{equation}\label{eq:first-shape-gain}
 \frac{\displaystyle\int_{s_*}^{2R_0}d'(s)\dot{\widehat\eta}_0(s)\dd s}
 {d(s_*)(1-\widehat\eta_0(s_*))}>0.
\end{equation}
This calculation is compatible with the running minimum: the base kernel is strictly decreasing, so its minimizing point in each prefix interval is unique. The first variation of that minimum is the kernel variation at the endpoint; uniform bounds on the difference quotients justify integration. Thus a small feasible perturbation, followed by height optimization, has coefficient strictly greater than $C_*$. After normalization it belongs to $\Acal_{R_0}$, so in particular $C_{R_0}>C_*>16/R_0^2$. The strict gain is qualitative; the argument does not evaluate $C_{R_0}$.

\subsection{A finite, mass-preserving core replacement}
Let $p$ be radial and $C^4$ on $\overline D_R$, positive in $D_R$, with $p(R)=p'(R)=0$ and $p'(0)=0$. Suppose its ordinary companion density
\[
 h=(1+\Delta)p\ge\eta>0\quad\text{on }D_R
\]
is extended by zero outside. Let $P=\int p=\int h$, $K=p*h$, and choose $0<c<K(0)$. Use the nonnormalized denominator $D(c)$ from \eqref{eq:planar-functional}.

Choose $0<b<R$ with $b\le1$, assume $h$ is nondecreasing and nonconstant on $[0,b]$, and define
\begin{equation}\label{eq:core-conditions-def}
 V_b=\osc_{[0,b]}h,\quad U_b=\frac{b^2V_b}{4-b^2},\quad
 p_b^{\min}=\min_{[0,b]}p,\quad
 \mu_b=\min_{0\le s\le R+b}(K(s)-c).
\end{equation}
The required protection conditions are
\begin{equation}\label{eq:core-protection}
 U_b<p_b^{\min},\qquad
 \mu_b>2U_bP+2\pi b^2U_bV_b.
\end{equation}
The first inequality preserves cap positivity; the second protects the truncated envelope. Both are additional assumptions on the chosen radius and height.

Define
\begin{equation}\label{eq:core-replacement}
\begin{split}
 g_b(r)&=\frac{J_0(r)}{J_0(b)}-1,\qquad
 \lambda_b=\frac{\int_{D_b}g_bh\dd x}{\int_{D_b}g_b\dd x},\\
 p_b^\sharp(r)&=\lambda_b+(p(b)-\lambda_b)\frac{J_0(r)}{J_0(b)}
 \quad(0\le r\le b).
\end{split}
\end{equation}
Let $\widetilde p=p_b^\sharp$ in $D_b$ and $\widetilde p=p$ outside $D_b$.

\begin{theorem}\label{thm:core-replacement}
Under these conditions, $\widetilde p\ge0$, its mass is $P$, and its companion is the positive measure
\begin{equation}\label{eq:positive-interface}
 (1+\Delta)\widetilde p=
 \lambda_b\ind_{D_b}\dd x+h\ind_{D_R\setminus D_b}\dd x
 +j_b\mathcal H^1|_{\partial D_b},\qquad
 j_b=p'(b)-(p_b^\sharp)'(b)>0.
\end{equation}
If $u=\widetilde p-p$ and
\[
 \mathcal E_b=\int_{D_b}(|\nabla u|^2-u^2)\dd x,
\]
then $\mathcal E_b>0$, $\widetilde K(0)=K(0)+\mathcal E_b$, and the full envelope at height $c$ is unchanged. Consequently the fixed-height coefficient increases by exactly $4\mathcal E_b/D(c)$. The replacement uniquely maximizes the diagonal energy among all core perturbations with the same boundary trace and mass.
\end{theorem}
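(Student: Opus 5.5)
The argument has four parts: the companion, the energy gain at the diagonal, the invariance of the truncated envelope, and uniqueness.

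\textbf{Companion and positivity.} Inside $D_b$ I compute $(1+\Delta)p_b^\sharp$ directly. Since $J_0$ solves the Helmholtz equation, $(1+\Delta)p_b^\sharp=\lambda_b$. The profile is continuous across $\partial D_b$ because $p_b^\sharp(b)=p(b)$. The distributional companion is therefore $\lambda_b\ind_{D_b}+h\ind_{D_R\setminus D_b}$ plus the jump of the normal derivative on the circle, which gives the single-layer term $j_b\mathcal H^1|_{\partial D_b}$. For mass, test against $1$: $\int\widetilde p=(1+\Delta)\widetilde p(\R^2)$. So it suffices that $\int_{D_b}(p-\widetilde p)=0$. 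Writing $u=\widetilde p-p$, we have $u(b)=0$ and $(1+\Delta)u=\lambda_b-h$ in $D_b$. Pairing with $g_b$, which satisfies $(1+\Delta)g_b=-1$ and $g_b(b)=0$, gives $-\int_{D_b}u-\text{(boundary flux term)}\,g_b=\ldots$. I will arrange Green's identity so that the choice of $\lambda_b$ is exactly the orthogonality condition $\int_{D_b}g_b(\lambda_b-h)=0$, which forces $\int_{D_b}u=0$. Note that $b\le1<j_{0,1}$ keeps $J_0>0$ on $[0,b]$.

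Positivity of $\lambda_b$ comes from $\lambda_b$ being a $g_b$-weighted average of $h\ge\eta$, because $g_b\ge0$. For $\widetilde p\ge0$, I plan to bound $|u|\le U_b$ by a maximum principle on $D_b$. The source $\lambda_b-h$ has oscillation at most $V_b$, and the comparison function $(b^2-r^2)/(4-r^2)$-type, or simply $V_b(b^2-r^2)/(4-b^2)$, dominates once the zeroth-order term is absorbed using $b\le1$. Then $\widetilde p\ge p_b^{\min}-U_b>0$. For the sign of $j_b$: because $h$ is nondecreasing on $[0,b]$, $\lambda_b-h$ changes sign once, so $u$ has a definite sign pattern near $b$. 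Combining this with $\int_{D_b}u=0$ and the flux identity $2\pi b\,u'(b)=\int_{D_b}(\lambda_b-h)-\int_{D_b}u$ gives $u'(b)<0$, that is, $j_b>0$. The flux identity follows from \eqref{eq:radial-flux}. I expect this sign argument to need care.

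\textbf{Energy and the diagonal.} By \eqref{eq:planar-companion-identities}, $K(0)=\norm p_2^2-\norm{\nabla p}_2^2$. Expanding $\widetilde K(0)$ gives
\[
\widetilde K(0)=K(0)+\bigl(\textstyle\int|\nabla u|^2-\int u^2\bigr)\cdot(-1)-2\bigl(\int\nabla p\cdot\nabla u-\int pu\bigr).
\]
The cross term equals $-2\int u\,h$, by integration by parts with $u(b)=0$. I will then rewrite it through $(1+\Delta)u$ so that the total becomes $+\mathcal E_b$. This uses $\int_{D_b}u\,(\lambda_b-h)=-\int u(1+\Delta)u=\mathcal E_b$, with $\lambda_b$ dropping out because $\int u=0$. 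Positivity of $\mathcal E_b$ follows from the Poincaré inequality on $D_b$ with $b\le1<j_{0,1}$, which makes $\int|\nabla u|^2-u^2$ coercive on $H_0^1(D_b)$, and from $u\not\equiv0$ since $h$ is nonconstant. Uniqueness of the maximizer is the same computation. Among perturbations $u\in H^1_0(D_b)$ with $\int u=0$, the diagonal change is the strictly concave quadratic $2\int uh-\int(|\nabla u|^2-u^2)$. Its unique critical point is given by the Euler--Lagrange equation $(1+\Delta)u=\lambda-h$, with $\lambda$ a Lagrange multiplier, and this is our replacement.

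\textbf{Envelope invariance.} This is the main obstacle. Write $\widetilde K-K=u*h+\widetilde p*(\widetilde H-H)$. Each term can be bounded pointwise using $|u|\le U_b$, total masses $P$, and $\norm{\widetilde H-H}_{\mathrm{TV}}\le2\pi b^2V_b$ (including the interface mass). This gives $|\widetilde K-K|\le 2U_bP+2\pi b^2U_bV_b$ on the region where the perturbation can matter. Outside $s\le R+b$ the change enters only where $K\le c$ is already irrelevant. I would check that the support of $u$ together with $\supp h$ confines changes to $s\le R+b$. If not, I would argue instead that the running minimum for $s>R+b$ is governed by values $\le c$ that are unchanged. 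On $[0,R+b]$, condition \eqref{eq:core-protection} keeps $\widetilde K>c$ wherever $K>c$ had margin $\mu_b$. Hence $\ell_{\widetilde p,c}=\ell_{p,c}$, the mass $P$ and hence $M_0$ are unchanged, and $D(c)$ is unchanged. The coefficient $4(a-c)/D(c)$ therefore increases by exactly $4\mathcal E_b/D(c)$.
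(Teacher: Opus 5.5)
Your four-part plan follows the paper's proof: Green pairing with $g_b$ for $\int_{D_b}u=0$, the supersolution $V_b(b^2-r^2)/(4-b^2)$ for $\norm{u}_\infty\le U_b$, the flux identity for $j_b$, the energy expansion, and concavity for uniqueness. However, the envelope step, which you rightly call the crux, fails as written. In $\widetilde K-K=u*H+\widetilde p*(\widetilde H-H)$ the second term contains no factor of $u$. The tools you list therefore bound it only by $\norm{\widetilde p}_\infty\norm{\widetilde H-H}_{\mathrm{TV}}\le 2\pi b^2V_b\norm{\widetilde p}_\infty$. Here $\norm{\widetilde p}_\infty$ is of the size of $p(0)$, not of $U_b$, so this is not controlled by the hypothesis $\mu_b>2U_bP+2\pi b^2U_bV_b$.

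The missing idea is to move the Helmholtz operator across the convolution. Since $\widetilde H-H=(1+\Delta)u$ with $u$ compactly supported, $\widetilde p*(1+\Delta)u=\bigl((1+\Delta)\widetilde p\bigr)*u=\widetilde H*u$. The paper uses the equivalent identity $p*\delta H=H*u$ to write $\widetilde K-K=2u*H+u*\delta H$. Now every term carries the small factor $u$. Positivity of $H$ and $\widetilde H$ and their common mass $P$ then give $|\widetilde K-K|\le 2U_bP<\mu_b$, with support in $\overline D_{R+b}$. The rest of your running-minimum argument then goes through.

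The sign of $j_b$ also needs a different argument from the one you sketch. Given $\int_{D_b}u=0$, the flux identity reads exactly $2\pi b\,j_b=\int_{D_b}(h-\lambda_b)\dd x$. So the sign pattern of $u$ near $\partial D_b$ plays no role. What you must show is that the $g_b$-weighted mean $\lambda_b$ of the nondecreasing, nonconstant $h$ lies strictly below its uniform disk mean. The paper gets this from the strict reverse Chebyshev inequality, using that $g_b$ is strictly decreasing. Equivalently, let $r_0$ be the point where $h-\lambda_b$ changes sign. The orthogonality $\int_{D_b}g_b(h-\lambda_b)=0$ gives $\int_{D_b}(h-\lambda_b)=\int_{D_b}(h-\lambda_b)\bigl(1-g_b/g_b(r_0)\bigr)$, whose integrand is nonnegative and not identically zero.

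Finally, your energy computation has two sign slips that cancel. The cross term is $+2\int_{D_b}uh$, and $\int_{D_b}u(\lambda_b-h)=\int_{D_b}u(1+\Delta)u=-\mathcal E_b$. Hence $\int_{D_b}uh=\mathcal E_b$ and $\widetilde K(0)-K(0)=2\mathcal E_b-\mathcal E_b=\mathcal E_b$, as you concluded.
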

\begin{proof}
For $b\le1$, $g_b>0$ in the disk and $(-\Delta-1)g_b=1$, with zero boundary trace. The Bessel equation gives
\begin{equation}\label{eq:core-u}
 (-\Delta-1)u=h-\lambda_b\quad\text{in }D_b,\qquad
 u|_{\partial D_b}=0.
\end{equation}
Pairing with $g_b$ shows $\int_{D_b}u=\int_{D_b}g_b(h-\lambda_b)=0$.

For $v\in H_0^1(D_b)$, the radial Cauchy--Schwarz estimate
\[
 |v(r,\theta)|^2\le\log(b/r)\int_r^b s|\partial_rv(s,\theta)|^2\dd s
\]
and integration yield
\begin{equation}\label{eq:core-poincare}
 \int_{D_b}v^2\le\frac{b^2}{4}\int_{D_b}|\nabla v|^2.
\end{equation}
Thus $-\Delta-1$ has a positive Dirichlet quadratic form and obeys the comparison principle on this disk. Since $(b^2-r^2)/(4-b^2)$ is a supersolution for the constant right side one, $|h-\lambda_b|\le V_b$ gives $\norm u_\infty\le U_b$. The first inequality in \eqref{eq:core-protection} proves positivity of the new cap.

Integrate \eqref{eq:core-u} and use its zero mass:
\begin{equation}\label{eq:jump-mass}
 2\pi b j_b=\int_{D_b}h\dd x-\pi b^2\lambda_b.
\end{equation}
The function $g_b$ is decreasing and $h$ is nondecreasing, nonconstant. The reverse Chebyshev inequality, obtained by integrating $(g_b(r)-g_b(s))(h(r)-h(s))\le0$, is strict and shows that the weighted mean $\lambda_b$ is smaller than the ordinary disk mean of $h$. Hence $j_b>0$. The jump in the radial derivative contributes the positive circle measure in \eqref{eq:positive-interface}.

Set $\mathcal B(v,w)=\int_{D_b}(\nabla v\cdot\nabla w-vw)\dd x$. This is positive definite by \eqref{eq:core-poincare}. Equation~\eqref{eq:core-u} and the mass constraint give
\[
 \mathcal E_b=\mathcal B(u,u)=\int_{D_b}hu\dd x>0;
\]
otherwise $u=0$, forcing $h=\lambda_b$, contrary to nonconstancy. Expanding the whole-cap energy yields
\[
 \widetilde K(0)-K(0)=2\int hu-\mathcal B(u,u)=\mathcal E_b.
\]
More generally, for every $v\in H_0^1(D_b)$ with zero mass,
\begin{equation}\label{eq:core-maximality}
 \left(\int(p+v)^2-\int|\nabla(p+v)|^2\right)-K(0)
 =\mathcal E_b-\mathcal B(v-u,v-u).
\end{equation}
This proves uniqueness of the maximizing perturbation, even without a radiality restriction on $v$.

Finally put $\delta H=(1+\Delta)u$. Its total variation is at most $2\pi b^2V_b$, by \eqref{eq:positive-interface} and \eqref{eq:jump-mass}. Distributional convolution gives
\[
 \widetilde K-K=2u*H+u*\delta H,
\]
supported in $\overline D_{R+b}$, with uniform norm at most $2U_bP+2\pi b^2U_bV_b<\mu_b$. Thus $\widetilde K>c$ on $[0,R+b]$ and $\widetilde K=K$ thereafter. Their running-minimum envelopes truncated at $c$ are identical. Mass and denominator are unchanged, while the diagonal gains $\mathcal E_b$, proving the coefficient assertion.
\end{proof}

If $c$ was an optimal original height, the new optimized coefficient is strictly greater than $C(p)$. There is also an explicit quantitative gain. If $h'(r)\ge\kappa r$ on $[0,b]$, take
\[
 \varphi(r)=(b^2-r^2)(r^2-b^2/3).
\]
Its mass is zero, and direct integration gives
\[
 \int r^2\varphi=\frac{\pi b^8}{36},\qquad
 \mathcal B(\varphi,\varphi)=\frac{2\pi b^8}{135}(30-b^2).
\]
The function $h-\kappa r^2/2$ is nondecreasing and $\varphi$ has one sign change from negative to positive, so $\int h\varphi\ge\pi\kappa b^8/72$. Cauchy--Schwarz in $\mathcal B$ then yields
\begin{equation}\label{eq:core-quantitative}
 \mathcal E_b\ge\frac{5\pi\kappa^2b^8}{384(30-b^2)},\qquad
 \frac{4\mathcal E_b}{D(c)}\ge\frac{5\pi\kappa^2b^8}{96(30-b^2)D(c)}.
\end{equation}
After normalization, the new cap belongs to $\Acal_R$; Theorem~\ref{thm:effective-transfer} applies directly to its positive interface measure.

\subsection{Moving the protected core radius}
The same construction is defined for $0<s<\min\{2,R\}$. On $[0,2)$, the signs $J_0>0$ and $J_1>0$ follow from their alternating series. Replace $b$ by $s$ in \eqref{eq:core-replacement}, and write the resulting perturbation as $u_s$, its jump as $j_s=-u_s'(s^-)$, and its energy as $\mathcal E(s)$. Then
\begin{equation}\label{eq:shape-radius-derivative}
 \mathcal E'(s)=2\pi s j_s^2.
\end{equation}
To prove this, differentiate $(-\Delta-1)u_s=h-\lambda_s$, $u_s|_{\partial D_s}=0$, and $\int u_s=0$. The parameter derivative has boundary value $j_s$, zero integral, and satisfies $(-\Delta-1)\dot u_s=-\lambda_s'$. Hence
\[
 \mathcal B(\dot u_s,u_s)=0,\qquad
 \mathcal B(u_s,\dot u_s)=\int h\dot u_s-2\pi s j_s^2.
\]
Also $\mathcal E(s)=\int_{D_s}hu_s$, whose derivative has no domain-boundary term because $u_s=0$ there. Symmetry of $\mathcal B$ proves \eqref{eq:shape-radius-derivative}.

At a radius $b$ satisfying the strict conditions of Theorem~\ref{thm:core-replacement}, $j_b>0$. Continuity of the explicit formulas and of the protection margins allows a radius $B>b$, $B<\min\{2,R\}$, such that for all $s\in[b,B]$,
\[
 j_s\ge j_b/2,\qquad U_s<p_s^{\min},\qquad
 \mu_s>2U_sP+2\pi s^2U_sV_s.
\]
The new companion remains positive because its density $\lambda_s$ lies between the minimum and maximum of $h$, and its jump is nonnegative. No extension of monotonicity of $h$ to all of $[0,B]$ is required. The mass and protected envelope remain fixed. Thus, for the fixed-height coefficients $C_s=4(K(0)+\mathcal E(s)-c)/D(c)$,
\begin{equation}\label{eq:radius-gain}
 C_B-C_b=\frac{8\pi}{D(c)}\int_b^B s j_s^2\dd s
 \ge\frac{\pi j_b^2(B^2-b^2)}{D(c)}>0.
\end{equation}
The continuation is restricted to the interval on which the displayed positivity and protection inequalities hold.

\subsection{A nonempty family of protected replacements}
The assumptions in \eqref{eq:core-protection} can be met explicitly. With $q$ from \eqref{eq:Bessel-cap}, set
\begin{equation}\label{eq:quartic-family}
 p_\eps(r)=q(r)\left(1+\eps\frac{r^4}{R_0^4}\right),\qquad0\le r\le R_0,
\end{equation}
for a sufficiently small fixed $\eps>0$. Its companion is $h_\eps=b_0+\eps w$, where
\[
 w(r)=\frac{b_0r^4-8r^3J_1(r)+16r^2q(r)}{R_0^4},\qquad
 w'(r)=\frac{32q(0)}{R_0^4}r+O(r^3).
\]
Thus $h_\eps\ge b_0/2$ for small $\eps$, and on a sufficiently small core,
\[
 h_\eps'(r)\ge\frac{16\eps q(0)}{R_0^4}r.
\]
At $\eps=0$, the kernel is strictly decreasing and its optimal root $s_0$ is also its first contact radius. By \eqref{eq:root-value}, the effective lower bound, and the constructive coefficient $13/8$,
\[
 s_0^2\ge\frac{256\pi}{13\sqrt3}>25.
\]
Since $R_0<4$, this gives $s_0>R_0+1$. Under a small perturbation, kernels and running minima converge uniformly, and the uniquely isolated root and its height converge as well. Hence the kernel on $[0,R_0+1/2]$ stays strictly above the optimal height. Taking $b$ still smaller gives $V_b=O(b^2)$, $U_b=O(b^4)$, while the required positive margins have positive limits. This proves \eqref{eq:core-protection} for a nonempty family of caps.

A related local necessary condition explains the Bessel form of the replacement. Suppose the original height is protected beyond $R+a$, and allow sufficiently small smooth radial variations supported in $D_a$ with zero mass. Such a variation changes the kernel only before the protected threshold, leaving the denominator unchanged. Its diagonal first variation is $2\int h\varphi$. Local stationarity for all these variations forces $h$ to be constant in the core. The radial equation then gives $p(r)=\lambda+A J_0(r)$. This is a local necessary condition, not a classification of the relaxed maximizers.

\subsection{Optimization over the support radius}\label{sec:consequences}
Proposition~\ref{prop:yudin}, Theorem~\ref{thm:effective-transfer}, and Corollary~\ref{cor:upper} give the finite bounds in Theorem~\ref{thm:uniform-main}; Theorem~\ref{thm:cap-compactness} gives attainment. After division by $m^2$, these bounds yield its asymptotic interval. The remaining fixed-radius estimate follows by comparing the finite upper and lower bounds at a degree chosen in terms of $R$.

\begin{proposition}\label{prop:fixed-radius-gap}
For each $R\ge R_0$,
\begin{equation}\label{eq:fixed-radius-gap}
 C_R\le\frac{13}{8}-\frac1{16R^4}.
\end{equation}
Furthermore,
\begin{equation}\label{eq:radius-optimization}
 N_{2m+1}\ge
 \left\lceil\sup_{R\ge R_0}\left\{C_R(m+1)(m+3/2)-19R^4\right\}\right\rceil
 \qquad(m\ge3).
\end{equation}
\end{proposition}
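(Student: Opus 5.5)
The second assertion \eqref{eq:radius-optimization} is immediate. For each fixed $R\ge R_0$ and $m\ge3$, Theorem~\ref{thm:effective-transfer} gives $N_{2m+1}\ge C_R(m+1)(m+3/2)-19R^4$. Since $N_{2m+1}$ is an integer, it is at least the least integer above the supremum over $R$ of these real lower bounds. The supremum is finite: each term is at most $N_{2m+1}$, and in fact it tends to $-\infty$ as $R\to\infty$ because $C_R\le13/8$. So the ceiling is well defined, and \eqref{eq:radius-optimization} follows. The substance is \eqref{eq:fixed-radius-gap}. We prove it by comparing the effective lower bound \eqref{eq:effective-MR} with the explicit upper bound \eqref{eq:exact-budget} and Proposition~\ref{prop:remainder-envelope}, at a single degree $m$ of size comparable to $R^4$.

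\emph{Step 1: the comparison inequality.} Fix $R\ge R_0$ and take $m$ large enough that $k^2=n(n+1)\ge16R^2$; this holds automatically for $m\gtrsim R^4$. Combining \eqref{eq:effective-MR}, which uses $M_R<19R^4$, with $N_{2m+1}\le\mathcal U_m$ and $\mathcal R(y)\le\frac76y^{2/3}$ gives
\[
 C_R\Bigl(m^2+\tfrac52m+\tfrac32\Bigr)\le\tfrac{13}{8}m^2+\tfrac32m+3+\tfrac76m^{2/3}+19R^4
\]
in both parity cases; the even case is slightly better. Subtracting $\frac{13}{8}(m^2+\frac52m+\frac32)$ from both sides gives
\[
 \Bigl(\tfrac{13}{8}-C_R\Bigr)\Bigl(m^2+\tfrac52m+\tfrac32\Bigr)\ge\tfrac{41}{16}m-\tfrac76m^{2/3}-19R^4+O(1).
\]
The point is that the lower bound carries the factor $(m+1)(m+3/2)$, whose linear coefficient $\frac{65}{16}$ (after multiplying by $\frac{13}{8}$) exceeds the construction's linear coefficient $\frac32$. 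This surplus is what beats the fixed loss $M_R$.

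\emph{Step 2: choice of degree.} We take $m$ to be an integer with $m\approx\lambda R^4$. Ignoring lower-order terms, the gap is about
\[
 \frac{(41/16)\lambda-19}{\lambda^2R^4}.
\]
With $\lambda=16$ this equals $\frac{22}{256R^4}>\frac{1}{16R^4}$, which leaves a margin of $\frac{6}{256R^4}$. Concretely, we set $m=\lceil16R^4\rceil$. Since $R_0^4>200$, we have $m>3200$, so the spectral threshold $k\ge4R$ holds trivially.

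\emph{Step 3: absorbing the lower-order terms.} We must check three things. First, $m^2+\frac52m+\frac32\le(16R^4+1)^2(1+O(R^{-4}))$. Second, the terms $\frac76m^{2/3}$ and the constant $O(1)$ are at most a small multiple of $m$. Third, the ceiling in the definition of $m$ costs only a factor $1+O(R^{-4})$. Each error is a relative perturbation of size $O(m^{-1/3})\le O(3200^{-1/3})$, roughly $7\%$ at worst, and these must fit inside the roughly $27\%$ relative margin $22/16$ versus $1$. This elementary but fiddly bookkeeping is the main obstacle. If the crude $\frac76m^{2/3}$ bound is too lossy, the fallback is to take $\lambda$ slightly larger, say $\lambda=20$, which widens the margin, or to use the $\gamma y^{2/3}+\frac43$ branch of $\mathcal R$. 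Either way the leading-order computation above shows that the inequality holds with room to spare, and this yields \eqref{eq:fixed-radius-gap}.
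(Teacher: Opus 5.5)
Your proposal is correct and follows the paper's argument. You compare \eqref{eq:effective-19} with the Lobatto upper bound at a degree $m\approx16R^4$, so that the linear surplus $\frac{41}{16}m-19R^4\ge\frac{22}{16}m$ exceeds $(m+1)(m+\frac32)/(16R^4)\approx m$, and you handle \eqref{eq:radius-optimization} by the same integrality and finiteness remark. The paper avoids your Step~3 bookkeeping by taking $m=2\lceil8R^4\rceil$ even and using the branch $\gamma y^{2/3}+\frac43$ of $\mathcal R$, which gives $N_{2m+1}\le\frac{13}{8}m^2+\frac32m$ directly. Your $\frac76m^{2/3}$ route also closes: the residual condition is about $\frac38\ge\frac76m^{-1/3}+O(1/m)$, which already holds for $m>256$, so $R_0>2$ suffices and you do not need $R_0^4>200$, which the paper does not prove.
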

\begin{proof}
For even $m\ge64$, the second branch in \eqref{eq:envelope-def}, $\gamma<1$, $m^{2/3}\le m/4$, and $10/3\le m/4$ give
\[
 N_{2m+1}\le\frac{13}{8}m^2+\frac32m.
\]
Choose $m=2\lceil8R^4\rceil$ and write $\delta=13/8-C_R$. Since $R_0>2$ (the alternating series for $J_1$ is positive on $(0,2]$), this $m$ is at least $64$. Comparing with \eqref{eq:effective-19} gives
\[
 \delta(m+1)(m+3/2)\ge\frac{41}{16}m+\frac{39}{16}-19R^4
 \ge\frac{11}{8}m.
\]
Using $(m+1)(m+3/2)\le m(m+3)$ and $m<16R^4+2$ yields
\[
 \delta\ge\frac{11}{8(m+3)}\ge\frac{11}{8(16R^4+5)}\ge\frac1{16R^4}.
\]
This proves \eqref{eq:fixed-radius-gap}. Since the effective bound holds for every $R$, its supremum gives \eqref{eq:radius-optimization}. The classes are nested, so $C_R$ is nondecreasing; it is bounded by $13/8$, and the negative quartic term ensures finiteness of the supremum.
\end{proof}

For a given degree, the lower bound is the maximum of the Fisher gap, the classical root bound, and the radius-optimized cap bound. Above the spectral threshold one may retain $M_R$ instead of $19R^4$. On the upper side, the exact integer budget \eqref{eq:exact-budget} can be smaller than its closed envelope, and the specialized low-degree formulas can be used separately.

Theorem~\ref{thm:sharp-remainder} limits further improvement by optimizing the same scalar budget. A smaller constructive quadratic coefficient would require a different count or completion mechanism. For the lower bound, the maximizers at fixed radius are not explicitly known from the present argument, and their uniqueness and boundary regularity remain undetermined here. The analysis also gives no effective convergence rate for $C_R$ as $R\to\infty$ and no maximizer over all support radii. It therefore does not establish convergence of $N_{2m+1}/m^2$ or equality of the two quadratic coefficients.

We conclude with the following conjecture.

\begin{conjecture}[Asymptotic node count]\label{conj:asymptotic-node-count}
Let $N_t$ denote the minimum number of nodes in a positive cubature formula of degree $t$ on $S^2$. The limit
\[
 L:=\lim_{m\to\infty}\frac{N_{2m+1}}{m^2}
\]
exists. Equivalently,
\[
 \liminf_{m\to\infty}\frac{N_{2m+1}}{m^2}
 =\limsup_{m\to\infty}\frac{N_{2m+1}}{m^2}.
\]
\end{conjecture}
If the limit exists, determine its exact value.

\section*{AI statement}
Artificial-intelligence-assisted tools were used in the preparation of this manuscript for language polishing, organization, exploratory checking, and assistance with routine derivations. All central mathematical ideas, the formulation of the main results, and all major proof decisions were human-generated; the authors retained control of the proof strategy and mathematical judgment throughout. As an additional verification layer, the complete proof was checked using the \texttt{qmd-prover} skill, developed by Xiao Ma; for details, see the GitHub project at \url{https://github.com/powergiant/qmd-prover}. Independently, the human authors also verified the correctness of the arguments and conclusions. The authors take full responsibility for the content and mathematical correctness of the manuscript.

\raggedbottom

\clearpage
\appendix
\section{Explicit mixed kernels in degrees seven and nine}\label{app:kernels}
For $m=3,4$, the mixed kernels in Lemma~\ref{lem:mixed-zero} have the following explicit bases. Their coefficients and pairings are checked using the moment formula \eqref{eq:spherical-moments}.

For $m=3$, a basis of $W_3$ is
\begin{align*}
 q_1&=X(X^2-3Y^2),&q_2&=X(X^2-Z^2),\\
 q_3&=Y(3X^2-Y^2),&q_4&=Y(3X^2-Z^2).
\end{align*}
The pairing $\int Zpq$ with $p\in H_2$ has rank six by positivity on $ZH_2$, hence kernel dimension four. Parity makes all but one monomial test vanish for each of the first two forms and all but one for each of the last two. The remaining tests are $p=XZ$ for $q_1,q_2$ and $p=YZ$ for $q_3,q_4$, where \eqref{eq:spherical-moments} gives zero. Linear independence is immediate from the coefficients of $X^3,XY^2,XZ^2,Y^3,YZ^2$. If $XY\ne0$, the equations $q_1=q_3=0$ require both $X^2=3Y^2$ and $Y^2=3X^2$, impossible. If either coordinate vanishes, the other also vanishes. Thus the only common projective zero is $[0:0:1]$.

For $m=4$, a basis of $W_4$ is
\begin{align*}
 q_1&=XY(X^2-Y^2),\\
 q_2&=XY(X^2-Z^2),\\
 q_3&=X^4-6X^2Y^2+Y^4,\\
 q_4&=X^4-3X^2Y^2-X^2Z^2+Y^2Z^2,\\
 q_5&=5X^4-10X^2Z^2+Z^4.
\end{align*}
For $q_1,q_2$, only the cubic test $p=XYZ$ survives parity in $\int Zpq$. For $q_3,q_4,q_5$, only $p=X^2Z,Y^2Z,Z^3$ need be checked. Substitution of \eqref{eq:spherical-moments} makes each of these pairings zero. The forms are linearly independent, and the dimension is five by Lemma~\ref{lem:mixed-zero}. If $XY=0$, $q_3=0$ forces $X=Y=0$, after which $q_5=0$ forces $Z=0$. If $XY\ne0$, the first two equations give $X^2=Y^2=Z^2$, but then $q_3=-4X^4\ne0$. Thus the common real projective zero set is empty.

\Needspace{15\baselineskip}
The same trace calculation gives a dimension-dependent obstruction in degree seven. For degree seven on $S^{d-1}$, put $D=\binom{d+2}{3}$. Equality $N=2D$ forces equally weighted antipodal directions with inner products $0$ or $\pm\sqrt{3/(d+4)}$, and $\sum_jv_jv_j^T=(D/d)I_d$. If $e$ counts nonorthogonal unordered pairs, the trace equation
\[
 D+\frac{6e}{d+4}=\frac{D^2}{d}
\]
gives
\[
 e=\frac{d(d-1)(d+1)(d+2)(d+4)^2}{216}.
\]
Nonintegrality excludes equality. For $d=3$ this is the $245/9$ obstruction in the main text.

\clearpage
\section{A Lobatto-latitude form of the Reznick formula}\label{app:latitude22}
Let $\tau(x_1,x_2,x_3)=(x_3,x_2,x_1)$. Applying this orthogonal coordinate permutation to the Reznick formula in Proposition~\ref{prop:22} gives the same twenty-two-node cubature rule in the five-latitude form below. Thus this is a coordinate presentation of the cited Reznick rule, rather than a second inequivalent rule; it is included to make explicit the relation with the Lobatto--Toeplitz construction of Section~\ref{sec:cascade}. Set
\[
 z_* =\sqrt{3/7},\qquad r_*=2/\sqrt7,\qquad \alpha=\arccos(1/7).
\]
Since $\cos(\alpha/2)=2/\sqrt7$ and $\sin(\alpha/2)=\sqrt{3/7}$, the nodes below are exactly the $\tau$-images of the directions in \eqref{eq:22directions}. Their individual sphere weights are as follows:
\begin{center}
\begin{tabular}{@{}p{0.68\textwidth}rr@{}}
\toprule
Nodes & Number & Weight\\
\midrule
$(0,0,\pm1)$ &2&$1/20$\\[3pt]
$(r_*\cos(k\pi/3),r_*\sin(k\pi/3),\pm z_*)$, $0\le k\le5$ &12&$49/1080$\\[3pt]
$(\pm1,0,0)$ &2&$1/27$\\[3pt]
$(0,\pm1,0)$ &2&$1/20$\\[3pt]
$(\cos(\alpha/2),\pm\sin(\alpha/2),0)$ and their antipodes &4&$49/1080$\\
\bottomrule
\end{tabular}
\end{center}
The signs at nonzero latitudes are independent of $k$. The five latitude masses are $1/20$, $49/180$, $16/45$, $49/180$, $1/20$, the degree-seven Lobatto weights at $1,z_*,0,-z_*,-1$.

The equatorial moments admit a direct verification. At even frequency $2j$, its unnormalized moment is
\[
 M_{2j}=\frac2{27}+\frac{(-1)^j}{10}
 +\frac{49}{270}\cos(j\alpha).
\]
Since $\cos\alpha=1/7$, $\cos2\alpha=-47/49$, and $\cos3\alpha=-143/343$, one obtains
\[
 M_2=M_4=0,\qquad M_6=-\frac{32}{315}
 =\frac{16}{45}\left(-\frac27\right).
\]
All odd equatorial moments vanish by antipodality. The two latitude hexagons eliminate all nonzero frequencies through six except frequency six, whose total spherical contribution is
\[
 2\frac{49}{180}(1-3/7)^3=\frac{32}{315}.
\]
This cancels the equatorial term. The zero frequency is handled by the Lobatto latitude rule, and all odd polynomials vanish by antipodality. Hence this is a positive degree-seven formula with twenty-two distinct nodes. This direct verification exhibits the Lobatto--Toeplitz structure of the cited Reznick rule.

\end{document}